\numberwithin{equation}{section}
\newlength{\bibitemsep}\setlength{\bibitemsep}{0\baselineskip plus .05\baselineskip minus .05\baselineskip}
\newlength{\bibparskip}\setlength{\bibparskip}{0\baselineskip}
\let\oldthebibliography\thebibliography
\renewcommand\thebibliography[1]{%
  \oldthebibliography{#1}%
  \setlength{\parskip}{\bibitemsep}%
  \setlength{\itemsep}{\bibparskip}%
}
\newtheorem{theorem}{Theorem}[section]
\newtheorem{proposition}[theorem]{Proposition}
\newtheorem{corollary}[theorem]{Corollary}
\newtheorem{lemma}[theorem]{Lemma}
\theoremstyle{definition}
\newtheorem{definition}[theorem]{Definition}
\newtheorem{example}[theorem]{Example}
\newtheorem{examples}[theorem]{Examples}
\newtheorem{remark}[theorem]{Remark}
\newcommand{\bbF}{\mathbb{F}}
\newcommand{\bbN}{\mathbb{N}}
\newcommand{\bbZ}{\mathbb{Z}}
\newcommand{\calG}{\mathcal{G}}
\newcommand{\calP}{\mathcal{P}}
\newcommand{\calQ}{\mathcal{Q}}
\newcommand{\calR}{\mathcal{R}}
\newcommand{\calS}{\mathcal{S}}
\newcommand{\calX}{\mathcal{X}}
\newcommand{\calZ}{\mathcal{Z}}
\newcommand{\rmB}{\mathrm{B}}
\newcommand{\rmC}{\mathrm{C}}
\newcommand{\rmF}{\mathrm{F}}
\newcommand{\rmK}{\mathrm{K}}
\newcommand{\rmQ}{\mathrm{Q}}
\newcommand{\rmU}{\mathrm{U}}
\newcommand{\rmZ}{\mathrm{Z}}
\newcommand{\Op}{\mathrm{Op}}
\newcommand{\Orb}{\mathrm{Orb}}
\newcommand{\Ad}{\mathrm{Ad}}
\newcommand{\Id}{\mathrm{Id}}
\newcommand{\id}{\mathrm{id}}
\newcommand{\Gr}{\mathrm{Gr}}
\newcommand{\Inn}{\mathrm{Inn}}
\newcommand{\Aut}{\mathrm{Aut}}
\newcommand{\Sym}{\mathrm{Sym}}
\newcommand{\Inj}{\mathrm{Inj}}
\newcommand{\Sur}{\mathrm{Sur}}
\newcommand{\Mor}{\mathrm{Mor}}
\newcommand{\End}{\mathrm{End}}
\newcommand{\Dih}{\mathrm{Dih}}
\newcommand{\FQ}{\mathrm{FQ}}
\newcommand{\FR}{\mathrm{FR}}
\newcommand{\primeQ}{\calQ_\mathrm{pri}}
\newcommand{\Pcon}{\calP_\mathrm{con}}
\newcommand{\Qcon}{\calQ_\mathrm{con}}
\newcommand{\Rcon}{\calR_\mathrm{con}}
\renewcommand{\phi}{\varphi}
\renewcommand{\epsilon}{\varepsilon}
\renewcommand{\emptyset}{\text{\upshape\O}}
\DeclareMathOperator{\lcm}{lcm}
\begin{document}

\thispagestyle{empty}

\title{\bf Burnside rings for \hbox{racks and quandles}}

\author{Nadia Mazza and Markus Szymik}

\newdateformat{mydate}{\monthname~\twodigit{\THEYEAR}}
\date{\mydate\today}

\maketitle


We restructure and advance the classification theory of finite racks and quandles by employing powerful methods from transformation groups and representation theory, especially Burnside rings. These rings serve as universal receptacles for those invariants of racks and quandles that are additive with respect to decompositions. We present several fundamental results regarding their structure, including additive bases and multiplicative generators. We also develop a theory of marks, which is analogous to counting fixed points of group actions and computing traces in character theory, and which is comprehensive enough to distinguish different elements in the Burnside rings. The new structures not only offer a fresh framework for the classification theory of finite racks and quandles but also equip us with tools to develop these ideas and create interfaces that strengthen connections with related areas of algebra. For example, they extend the Dress--Siebeneicher theory of the Burnside ring of the infinite cyclic group beyond the realm of permutation racks.


\section*{Introduction}

Racks and quandles are algebraic structures that model the axioms of knotting and braiding. Perhaps because of their fundamental role in abstracting those patterns, these structures have been rediscovered many times and given many names. 
The subject dates back at least to Peirce's~1880 work on the algebra of logic~\cite{Peirce}, but we will resist the temptation to trace its history here, as such attempts are often inaccurate. For instance, it is seldom mentioned that Burde's~1978 survey~\cite{Burde} had described the algebra of knot diagrams well before Joyce~\cite{Joyce} and Matveev~\cite{Matveev} built on Waldhausen's results~\cite{Waldhausen} and explained how to use it to classify knots in terms of this algebra. We hasten to point out, though, that knot theory is only a small part of the mathematics that has profited from the introduction of these concepts. In fact, racks and quandles are more general than groups, which are omnipresent in mathematics and give rise to racks and quandles through conjugation. What motivates us is not the increased generality, but the evidence that has shown racks and quandles to be relevant to researchers working in many different contexts. For instance, in geometry, in addition to knot theory as mentioned above, these structures can already be observed in the elementary context of the intercept theorem, and have appeared in Brieskorn's work~\cite{Brieskorn} on singularities, Yetter's work~\cite{Yetter} on monodromy, and---relatedly---moduli spaces of branched covers~\cite{EVW, Randal-Williams}. 

Even from a purely algebraic perspective, it is hard to ignore the influence the theory of racks and quandles has had on us over the past decades. While abelian quandles are~`just' modules over the Laurent polynomial ring, the general nonabelian theory impacts the Yang--Baxter equations~\cite{CES, Eisermann, Lebed--Vendramin}, Hopf algebras~\cite{Andruskiewitsch--Grana, Vendramin, HLV, Wagemann, Kashaev, Andruskiewitsch, Carnovale--Maret}, Leibniz algebras~\cite{Loday,Kinyon,Krahmer--Wagemann}), and---more recently---also various aspects of number theory~\cite{Takahashi, Shusterman, Szymik:AS, DS}. Many of these applications require a thorough understanding of finite racks and quandles, and even situations that typically require infinite structures can reasonably be studied through finite approximations: Bardakov, Singh, and Singh have shown that free quandles and knot quandles are residually finite~\cite{BSS}. The purpose of this paper is to advance our understanding of finite racks and quandles in two ways.

First, we argue that Burnside rings, which have been profitably employed to manage the stocktaking and bookkeeping in transformation groups and representation theory~(see~\cite{Solomon, Dress, tomDieck:1, tomDieck:2, Bouc:Burnside}), can be adapted to the new context of racks and quandles. This adaptation offers enhanced methods for reasoning about the classification problem, formulating questions, and presenting answers. We note that this adaptation is not straightforward. Recall that the Burnside ring~$\rmB(G)$ of a group~$G$ is generated by classes represented by finite~$G$--sets, and that the addition is induced by their disjoint union, which is also the categorical sum~(or coproduct) in the category of~$G$--sets. For racks and quandles, coproducts are no longer modelled by disjoint unions. While both of these constructions give rise to symmetric monoidal categories, neither of them leads to a useful notion of a Burnside ring for finite racks and quandles. Instead, we replace these compositional structures with an approach that emphasises decompositions of racks and quandles. 

Second, once we have established our framework, we prove fundamental results about the Burnside rings of finite racks and quandles and relate them to existing structures that are important in algebra, such as in transformation groups and representation theory. Part of our work can also be thought of as an extension of the Dress--Siebeneicher theory~\cite{Dress--Siebeneicher:pro-finite, Dress--Siebeneicher:Witt} of the Burnside ring of the infinite cyclic group~(and its relations to~$\lambda$--rings and the universal ring of Witt vectors) beyond the realm of permutation racks.


We define the Burnside ring~$\rmB(\calR)$ of finite racks so that it is the universal additive invariant for finite racks~(see Definition~\ref{def:B(R)}). This means that every finite rack~$R$ defines an element~$b(R)$ in it, and we have a relation~$b(R)=b(S)+b(T)$ whenever we can decompose~$R$ into subracks~$S$ and~$T$ in a way we make precise in Definition~\ref{def:decomposition}. The product is defined so that we have~\hbox{$b(R\times S)=b(R)b(S)$}~(see Proposition~\ref{prop:AR_multiplication}). By construction, the ring~$\rmB(\calR)$ is additively generated by the classes~$b(R)$ of finite racks, but it turns out that it is generated by the classes of the \textit{connected} racks. Moreover, we show that~$b(C)=b(D)$ for connected racks~$C$ and~$D$ if and only if~$C$ and~$D$ are isomorphic~(see Theorem~\ref{thm:inj_on_connected}), which is not true in general. Furthermore, there are no additive relations between the classes of the connected racks, so that these form an integral basis of the Burnside ring~$\rmB(\calR)$ as an abelian group~(see Theorem~\ref{thm:basis}). We note that we could have used this result to \textit{define} the Burnside ring; then, however, it would have been our duty to show that this definition gives, indeed, also the universal additive invariant.

The Burnside ring~$\rmB(\calR)$ of finite racks contains, as subrings with retractions, the Burnside ring~$\rmB(\calQ)$ of finite quandles, which is similarly defined, and the Burnside ring~$\rmB(\bbZ)$ of the group of integers, which is isomorphic to the Dress--Siebeneicher Burnside ring of finite permutation racks~(see Proposition~\ref{prop:quandles_split}, Example~\ref{ex:another_retraction}, and Proposition~\ref{prop:permutations_split} for the retractions). We show that there are no multiplicative relations between these two subrings, so that their tensor product~\hbox{$\rmB(\calQ)\otimes\rmB(\bbZ)$} is a subring of~$\rmB(\calR)$. However, the latter is bigger~(see Corollary~\ref{cor:tensor_embed} to Theorem~\ref{thm:CxQ<R} and Example~\ref{ex:not_surjective}), and its multiplicative structure is much more difficult to understand. In contrast, we can show that the Burnside ring~$\rmB(\calQ)$ of finite quandles is a monoid ring~(see Theorem~\ref{thm:Burnside_for_quandles_is_monoidal}). Thus, we can speak of \textit{prime} quandles, which generate the Burnside ring~$\rmB(\calQ)$ of finite quandles multiplicatively~(see Definition~\ref{def:prime} and Theorem~\ref{thm:multiplicative_generators}). There is a clear analogy with prime numbers and prime knots, and the~`arithmetic' of connected quandles remains to be explored; we can only speculate about the uniqueness of prime factorisations here~(see Remark~\ref{rem:uniqueness}). We are not aware of any multiplicative relations between prime quandles, and we can prove a cancellation result~(see Theorem~\ref{thm:cancellation}) that implies that none of the classes~$b(Q)$ of non-empty finite quandles~$Q$ is a zero-divisor in~$\rmB(\calQ)$. The proof of this and many other of our results is facilitated by our theory of marks.

Our theory of marks is inspired by analogies with classical ideas from transformation groups and representation theory. Recall that the marks of the Burnside ring~$\rmB(G)$ of a group~$G$ are ring homomorphisms~$\rmB(G)\to\bbZ$ that give the numbers of~$H$--fixed points for the various subgroups~$H\leqslant G$. Together, they embed the Burnside ring~$\rmB(G)$ into a product of copies of the ring~$\bbZ$, just like the characters of a finite group embed the representation ring as a lattice in a ring of functions. In our context, it turns out to be a fruitful idea to count morphisms from finitely generated connected racks~$C$ into racks, and this leads to the notion of marks~$\Phi_C\colon\rmB(\calR)\to\bbZ$ for finite racks. Note that every knot gives rise to such a mark~(see Remark~\ref{rem:knots}), and we can show that marks are strong enough to distinguish all elements in the Burnside ring~(see Theorem~\ref{thm:marks}).

Another relation to transformation groups and representation theory is not just an effective analogy: there is an isomorphism between~$\rmB(G)$ and a suitably defined crossed Burnside ring of group actions. Recall that the crossed Burnside ring~$\rmB^\times(G)$ is made from~\textit{crossed}~$G$--sets, which are~$G$--sets that are equipped with a map to the~$G$--set~$\Ad(G)$, which is the set underlying~$G$ with the~$G$--action given by conjugation~\cite{Oda--Yoshida, Bouc:2003b, Mazza}. Such crossed~$G$--sets define racks, and every rack comes from such a crossed~$G$--set. As one way to make the relationship precise, we introduce a category~$\calX$ of \textit{crossed actions} and show that we have an isomorphism~$\rmB(\calR)\cong\rmB(\calX)$ of rings~(see Theorem~\ref{thm:iso_BX_BR}).


Here is an outline of the paper. In Section~\ref{sec:racks}, we include the definitions and notation related to racks and groups acting on them that we employ throughout the paper. Section~\ref{sec:decompositions} reviews decompositions of racks. The Burnside rings of racks and quandles, the primary focus of this article, are introduced in Section~\ref{sec:Burnside}, which also encompasses our discussion of additive invariants and their relation to Grothendieck rings. Section~\ref{sec:basis} addresses the additive structure of our Burnside rings. It contains our proof that the connected objects provide an integral basis. The following Section~\ref{sec:marks} presents the definitions and examples of marks, along with a proof showing that these are sufficient to distinguish elements in the Burnside rings. We discuss permutation racks in Section~\ref{sec:perm} and quandles in Section~\ref{sec:prime}. To describe multiplicative generators of the Burnside ring of quandles, we also consider product factorisations of quandles. In the final Section~\ref{sec:crossed}, we discuss the relationship between our rings, the Burnside rings of crossed~$G$--sets for a fixed group~$G$, and a~`global' variant thereof, which does not fix a group.


\subsection*{Acknowledgement}

The second author thanks the Isaac Newton Institute for Mathematical Sciences, Cambridge, for support and hospitality during the programme~`Equivariant Homotopy Theory in Context', where work on this paper was undertaken. This work was supported by EPSRC grant EP/Z000580/1.


\section{Racks, quandles, and groups acting on them}\label{sec:racks}

This section contains the introductory material on racks and actions of groups on them that we need further on. We also use the occasion to fix our notation for the rest of the paper. Our basic references are~\cite{Joyce, Brieskorn, Fenn--Rourke, Andruskiewitsch--Grana}.


\subsection{Racks}

We start with a definition of the objects of interest.

\begin{definition}
A~\textit{rack}~$(R,\rhd)$ is a set~$R$ together with a binary operation~\hbox{$\rhd\colon R\times R\to R$} such that the left multiplications~\hbox{$\ell_a\colon b\mapsto a\rhd b$} are automorphisms of the pair~$(R,\rhd)$ for all~$a$. 
Automorphisms, of course, are bijective morphisms, and the fact that~$\ell_a$ is a morphism can be rewritten as~\hbox{$\ell_a(b\rhd c)=\ell_a(b)\rhd\ell_a(c)$}, or~\hbox{$a\rhd(b\rhd c)=(a\rhd b)\rhd(a\rhd c)$} for all~$b$ and~$c$. 
\end{definition}

\begin{remark}\label{rem:sums}
There is an obvious category~$\calR$ of racks, and it has all limits and colimits. The limits are preserved by the forgetful functor to the category of sets. In particular, the product~\hbox{$R\times S$} of two racks~$R$ and~$S$ is supported on the cartesian product of the underlying sets, and the rack operation is component-wise. Colimits, however, are more complicated. In particular, the sum~$R+S$ of two racks is, in general, not supported on the disjoint union of the underlying sets, and the sum of two finite racks need not be finite. The empty set is a rack: it is an initial object in the category of racks, as the theory of racks has no constants.
\end{remark}

\begin{example}\label{ex:permutation_racks}
A~\textit{permutation rack} is a rack~$(R,\pi)$, where~$\pi\in\Sym(R)$ is a permutation of the set~$R$, and we have~\hbox{$a\rhd b=\pi(b)$} for all~\hbox{$a,b\in R$}. In particular, for~$\pi=\id_R$, we get the~\textit{trivial racks}, which satisfy~\hbox{$a\rhd b=b$} for all~\hbox{$a,b\in R$}.
\end{example}

For some purposes, it is helpful to think of racks as generalisations of permutations.


\subsection{Quandles}

By definition, each rack comes with its own supply of symmetries: the left-multiplications. However, these symmetries are not natural in the categorical sense. Recall that a natural transformation~$\Id_\calR\to\Id_\calR$ from the identity functor to itself is a family~$\phi_R\colon R\to R$ of automorphisms, one for each rack~$R$, such that for all morphisms~\hbox{$f\colon R\to S$} of racks we have~$\phi_S\circ f=f\circ \phi_R$. These natural symmetries form the~\textit{centre} of the category of racks, and they have been determined in~\cite[Thm.~5.4]{Szymik:2018}: the centre~$\rmZ(\calR)=\End(\Id_\calR)$ of the category of racks is an abelian monoid isomorphic to~$\bbZ$ under addition, where the generator corresponds to the~\textit{canonical automorphism}
\begin{equation}\label{eq:canonical_automorhphism}
a\mapsto a\rhd a=\sigma(a).
\end{equation}

\begin{definition}
Racks that do not have natural symmetries other than the identity are called~\textit{quandles}; they satisfy~$a\rhd a=a$ for all~$a$. 
\end{definition}

We write~$\calQ$ for the full subcategory of the category~$\calR$ of racks that consists of the quandles. 

\begin{example}
The only permutation racks that are quandles are the trivial racks~(see Example~\ref{ex:permutation_racks}).
\end{example}

\begin{examples}
Any group~$G$ defines a quandle with respect to the operation~\hbox{$g\rhd h=ghg^{-1}$}. More generally, the same formula works for any subset~\hbox{$Q\subseteq G$} that is a union of conjugacy classes.
\end{examples}

\begin{examples}\label{ex:free}
A rack~$R$ is called~\textit{free} if there exists a subset~$B\subseteq R$ such that any map~\hbox{$B\to S$} of sets into a rack~$S$ extends uniquely into a rack morphism~$R\to S$. The free rack~$\FR(B)$ on a set~$B$ can be modelled on the set~\hbox{$\FR(B)=\rmF(B)\times B$}, where~$\rmF(B)$ is the free group on~$B$. The map~\hbox{$B\to\FR(B)$} is given by~$b\mapsto(e,b)$, where~$e$ denotes the identity element of the group, and the rack structure is given by~\hbox{$(g,x)\rhd(h,y)=(gxg^{-1}h,y)$}. The product~\hbox{$\FR(B)=\rmF(B)\times B$} is never finite unless~$B$ is empty, in which case~$\FR(B)$ is empty, too. Any rack is the quotient of a free rack: given any rack~$R$, there is a canonical surjection~$\FR(R)\to R$, which is adjoint to the identity. It is surjective, as every element~$r$ is hit by its corresponding generator~$(e,r)$: the canonical surjection restricts to the identity on the subset~\hbox{$R\subseteq\FR(R)$} of generators. The free quandle on a set~$S$ can be modelled as the subset~$\FQ(S)\subseteq\rmF(S)$ of conjugates of the elements of~$S$ inside the free group on~$S$.
\end{examples}

\begin{remark}
A rack is~\textit{cyclic} if it is generated by a single element. It is easy to classify the cyclic racks. The free rack on one generator is isomorphic to~$\bbZ$ with~$a\rhd b=b+1$. This is a permutation rack and infinite cyclic, generated by any element. The finite cyclic racks are the permutation racks supported on~$\bbZ/n$, using the same formula but read in~$\bbZ/n$, so finite cyclic racks are permutation racks for a full cycle.
\end{remark}

The categories~$\calR$ of racks and~$\calQ$ of quandles are challenging to comprehend. This observation is underlined by the fact, which we will prove in a sequel~\cite{Mazza--Szymik:Mackey}, that neither the category of racks nor the category of quandles is a topos. The relevance of this result for us comes from the fact that almost all other categories in this paper~\textit{are} topoi. In fact, they are functor categories, i.e., categories of functors from some category into the category of sets, and as such, they are easier to deal with than~$\calR$ and~$\calQ$; their disjoint unions and coproducts are much better behaved than here.


\subsection{Groups acting on racks}\label{sec:group_actions}

Let~$R$ be a rack. The map~$a\mapsto\ell_a$ is a map from~$R$ to the symmetric group~$\Sym(R)$ of permutations of~$R$. It is compatible with the rack structure on both sides:~\hbox{$\ell_{a\rhd b}=\ell_a\rhd\ell_b$}. The image of this map lies in the subgroup~\hbox{$\Aut(R,\rhd)\leqslant\Sym(R)$} of automorphisms of~$(R,\rhd)$. In fact, the image lies in the subgroup~\hbox{$\Inn(R,\rhd)\leqslant\Aut(R,\rhd)$} of~\textit{inner} automorphisms of~$(R,\rhd)$, which is---by definition---the subgroup of~$\Aut(R,\rhd)$ generated by the~left-multiplications~$\ell_a$. The group~$\Inn(R,\rhd)$ is sometimes called the~\textit{operator group}~$\Op(R,\rhd)$, for example by Fenn and Rourke~\cite{Fenn--Rourke}.

Another group that is canonically acting on a rack~$R$ is the~\textit{enveloping group}~$\Gr(R,\rhd)$ of the rack. Recall that the group~$\Gr(R,\rhd)$ is generated by the set~\hbox{$\{g(x)\mid x\in R\}\cong R$} and subject to the relations~\hbox{$g(x\rhd y)=g(x)g(y)g(x)^{-1}$}. The group~$\Gr(R,\rhd)$ acts on~$R$ by~\hbox{$g(x)\cdot y=x\rhd y$} for all~$x,y\in R$, and the group~$\Gr(R,\rhd)$ is universal with respect to rack morphisms~\hbox{$R\to G$} into groups~$G$. This means that there is a natural~(in the group~$G$ and the rack~$R$) bijection
\[
\Mor_\calG(\Gr(R,\rhd),G)\cong\Mor_\calR(R,G),
\]
where on the right-hand side, we consider~$G$ as a rack by means of its conjugation. In other words, the functor~$(R,\rhd)\mapsto\Gr(R,\rhd)$ from the category~$\calR$ of racks to the category~$\calG$ of groups is left-adjoint to the conjugation rack functor.

\begin{remark}
The enveloping group~$\Gr(R,\rhd)$ is finite only if~$R$ is empty: if~$R$ is not empty, the surjection~$R\to\star$ onto~$\star$ induces a surjection~$\Gr(R)\to\Gr(\star)=\bbZ$ of groups. Here, we denote by~$\star$ the singleton rack, which is a terminal object in the category of racks.
\end{remark}

The enveloping group~$\Gr(R,\rhd)$ of the rack~$(R,\rhd)$ surjects onto the inner automorphism group~$\Inn(R,\rhd)$ via~\hbox{$g(x)\mapsto\ell_x$}. We summarise the discussion as follows:
\[
\rmF(R)\twoheadrightarrow\Gr(R,\rhd)
\twoheadrightarrow\Inn(R,\rhd)
\rightarrowtail\Aut(R,\rhd)
\rightarrowtail\Sym(R).
\]
All these groups act on the rack~$R$, and we may write~$(g,x)\mapsto g\rhd x$ for these actions, as they extend the action of~$R$ on itself via the left-multiplications~$\ell_x$.


\subsection{Connected and homogeneous racks}

The canonical actions of the groups~$\Inn(R,\rhd)$ and~$\Aut(R,\rhd)$ on any rack~$(R,\rhd)$ lead to the following definitions.

\begin{definition}\label{def:orbits_connected}
Let~$R$ be a rack. The set of~\textit{orbits} of~$R$ under the action of~$\Inn(R,\rhd)$ via the left multiplications is denoted~$\Orb(R,\rhd)$. 
A rack~$R$ is called~\textit{connected} if~$\Orb(R,\rhd)$ consists of a single orbit.
This is equivalent to the action of the enveloping group~$\Gr(R,\rhd)$, or that of the free group~$\rmF(R)$, on~$R$ to have a single orbit. A rack is called~\textit{homogeneous} if the action of~$\Aut(R,\rhd)$ has a unique orbit.
\end{definition}

We note that the empty rack is not connected in this sense.

\begin{remark}
Connected racks are homogeneous, as~$\Inn(R,\rhd)$ is a subgroup of~$\Aut(R,\rhd)$, but the converse is false, as the following Example~\ref{ex:homogeneous_not_connected} shows.
\end{remark}

\begin{example}\label{ex:homogeneous_not_connected}
We give an example of a homogeneous quandle that is not connected. Consider the trivial quandle~$(R,\id_R)$ on a set~$R$. This means that we have~\hbox{$a\rhd b=b$} for all~$a,b\in R$ as in Example~\ref{ex:permutation_racks}. All permutations of the set~$R$ are automorphisms of this quandle, so that~$\Aut(R,\id_R)=\Sym(R)$, and this group acts transitively on~$R$. However, we have~$\Inn(R,\id_R)=\{\id_R\}$, which acts transitively if and only if~$R$ has at most one element.
\end{example}

If~$\alpha$ is an automorphism of a rack~$(R,\rhd)$, we have
\begin{equation}\label{eq:aut}
\ell_{\alpha(x)}=\alpha\ell_x\alpha^{-1}
\end{equation}
for all elements~$x$ in~$R$~(see~\cite[(1.5)]{Andruskiewitsch--Grana}).

Let~$(R,\rhd)$ be a connected rack and~$G=\Inn(R,\rhd)$. Any element~$x$ in~$R$ defines a 
surjection~$\phi_x\colon G\to R$ by evaluation, where the rack structure on~$G$ is given by~$\alpha\rhd\beta=\alpha\ell_x\alpha^{-1}\beta$. If~$H\leqslant G$ is the stabiliser of~$x$ in~$G$, which is the centraliser of the inner automorphism~$\mu=\ell_x$, then this surjection induces an isomorphism~\hbox{$R\cong G/H$} of racks, where the rack structure on the set of cosets is given by
\begin{equation}\label{eq:like-x-set}
\alpha H\rhd\beta H=\alpha\mu\alpha^{-1}\beta H.
\end{equation}
Similar arguments work in the homogeneous case, with the full automorphism group~$\Aut(R,\rhd)$ replacing the inner automorphism group~$\Inn(R,\rhd)$. The case of quandles is treated in~\cite[Thm.~7.1]{Joyce}. Conversely, if~$\mu$ is any element in a group~$G$ and~$H\leqslant\rmC_G(\mu)$, then~\eqref{eq:like-x-set} defines a rack structure on~$G/H$, and it is a quandle if and only if~$\mu\in H$.

\begin{remark}\label{rem:adapted}
The condition~$H\leqslant\rmC_G(\mu)$ is equivalent to~$[H,\mu]=\{e\}$, which means that the element~$\mu$ commutes with every element in the subgroup~$H$. This condition is stronger than necessary. It is necessary and sufficient that~$[H,\mu]$ is contained in all conjugates of~$H$. To see this, note that~$\alpha\mu\alpha^{-1}\beta H$ only depends on the coset of~$\beta$, but if we replace~$\alpha$ with~$\alpha h$ for some~$h$, we need to ensure~\hbox{$\alpha\mu\alpha^{-1}\beta H=(\alpha h)\mu(\alpha h)^{-1}\beta H$}, and this is equivalent to~\hbox{$\alpha[h,\mu]\alpha^{-1}\in H$}.
\end{remark}

There is a canonical~$G$--action on~$G/H$. This action is through automorphisms of the rack~$G/H$, as we have
\[
(g\alpha H)\rhd(g\beta H)=g(\alpha H\rhd\beta H),
\]
and the conjugates of~$\mu$ act as inner automorphisms. As the kernel of the action is the intersection of the conjugates of the subgroup~$H$, this allows us to determine when such a rack is connected.

\begin{remark}\label{rem:profile}
From~\eqref{eq:aut}, we see that if~$(R,\rhd)$ is a connected rack, or more generally a homogeneous rack, then all inner automorphisms are conjugate within the automorphism group~$\Aut(R,\rhd)$, and hence also in the permutation group~$\Sym(R)$. Therefore, as permutations, the inner automorphisms of a homogeneous rack all have the same cycle structure, and that cycle structure is an invariant of the homogeneous rack. This cycle structure is often called the~\textit{profile} of~$(R,\rhd)$. We will present a more conceptual approach to profiles in Section~\ref{sec:profiles}~(see Proposition~\ref{prop:profile} in particular).
\end{remark}


\section{Decompositions}\label{sec:decompositions}

The orbits of a rack~(Definition~\ref{def:orbits_connected}) are subsets that are mapped to themselves by all left-multiplications. The following weaker notion is fundamental for any structure theory.

\begin{definition}\label{def:subrack}
A subset~$S\subseteq R$ of a rack is a~\textit{subrack} if~$\ell_s(S)=S$ for all~$s$ in~$S$. 
\end{definition}

\begin{remark}
Of course, we can define whatever we like, but some care must be exercised to avoid common pitfalls. For instance, a subset of a rack closed under the operation~$\rhd$ need not be a subrack; the left-multiplications on the subset will still be injective but need not be surjective~(see Kamada's~\cite{Kamada:2010} for counterexamples). Definiton~\ref{def:subrack} avoids this problem by insisting on equality. In any event, there is no difference for finite racks, which are our primary concern, as injectivity and surjectivity are equivalent for self-maps of finite sets. Similar care is required when defining congruences. This has been clarified by Burrows--Tuffley~\cite{Burrows--Tuffley}. Again, in the case of racks and quandles that are finite, there is nothing to worry about.
\end{remark}

Being a subrack is weaker than being an orbit, as we do not require that we have~$\ell_r(S)=S$ for all~$r$ in~$R$. Here is a name for the stronger notion.

\begin{definition}\label{def:ideal}
We say that a subrack~$S\subseteq R$ is an~\textit{ideal} if it is preserved by all of~$R$, that is, if~$\ell_r(S)=S$ for all~$r\in R$. 
\end{definition}

Note that the empty set is an ideal of any rack, as is~$R$ itself.

A subrack~$S$ is a rack in its own right, and as the left-multiplications~$\ell_s$ are bijective, it follows that also~\hbox{$\ell_s(R\setminus S)=R\setminus S$} for all~$s$ in~$S$.
However, it does~\textit{not} follow that the set-theoretic complement~$R\setminus S$ is a subrack as well. For a subset~$S$ to be an ideal is equivalent to~$S$ being a subrack such that the complement~$R\setminus S$ is a subrack, too.

\begin{example}\label{ex:simple}
Let~$\big(\{(1,2),(1,3),(2,3)\},\rhd\big)$ be the conjugation rack that consists of the transpositions inside~$\Sym(3)$, the symmetric group on~$\{1,2,3\}$.
That is, we have~$a\rhd b=aba^{-1}$ for any two elements~\hbox{$a,b\in\{(1,2),(1,3),(2,3)\}$}. 
This is a quandle, and so every singleton, such as~$\{(1,2)\}$, is a subrack. However, the computation~$(1,3)\rhd(2,3)=(1,2)$ shows that the complement is not a subrack, so~$\{(1,2)\}$ is not an ideal.
\end{example}

We refer to~\cite{HSW, Kiani--Saki, Saki--Kiani} for more information on lattices of subracks and sub-quandles.


\begin{definition}\label{def:indecomposable}
A rack~$R$ is called~\textit{indecomposable} if it has precisely two ideals, namely~$\emptyset$ and~$R$ itself. This means that~$R$ is non-empty, and if we write~$R$ as a union of two disjoint subracks~$S$ and~$T$, then at least one of them must be empty~(compare with~\cite[Def.~1.13]{Andruskiewitsch--Grana}). 
\end{definition}

\begin{remark}
Definition~\ref{def:indecomposable} suggests to call such racks~`simple' instead. However, simple racks were defined differently by Andruskiewitsch and Gra\~na~\cite[Def.~3.3]{Andruskiewitsch--Grana}, following Joyce's work~\cite{Joyce:simple} on simple quandles.
\end{remark}

\begin{proposition}\label{prop:connected_implies_indecomposable}
A rack is connected if and only if it is indecomposable.
\end{proposition}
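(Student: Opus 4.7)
The plan is to show both directions by identifying the two key objects to compare: orbits of the $\Inn(R,\rhd)$--action, and ideals in the sense of Definition~\ref{def:ideal}. The text already records (just above Example~\ref{ex:simple}) that a subset $S\subseteq R$ is an ideal if and only if both $S$ and its set-theoretic complement $R\setminus S$ are subracks. This equivalence will be the hinge on which both implications turn.

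For the implication connected $\Rightarrow$ indecomposable, I would start by noting that a connected rack is non-empty by Definition~\ref{def:orbits_connected}. Suppose $R=S\sqcup T$ as disjoint subracks. Since $\ell_s(S)=S$ for all $s\in S$ and $\ell_t(T)=T$ for all $t\in T$, and $\ell_r$ is a bijection of $R$ for every $r$, it follows that $\ell_r(S)=S$ for every $r\in R$; that is, $S$ is an ideal. If $S$ were non-empty, then picking $s\in S$, the $\Inn(R,\rhd)$--orbit of $s$ would be contained in $S$, but connectedness forces this orbit to be all of $R$, so $S=R$ and $T=\emptyset$. This gives indecomposability.

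For the converse, assume $R$ is indecomposable, hence non-empty. Pick any $s\in R$ and let $O\subseteq R$ be its orbit under $\Inn(R,\rhd)$. By definition of the orbit, $\ell_r(O)\subseteq O$ for every $r\in R$, and since each $\ell_r$ is a bijection of the finite\,/infinite set $R$ preserving $O$, we actually have $\ell_r(O)=O$ for every $r\in R$. (For the finite case this is automatic; in general, $\ell_r^{-1}$ is also generated by the $\ell_t$'s via the rack relations, giving $\ell_r^{-1}(O)\subseteq O$ as well.) Thus $O$ is an ideal, so its complement $R\setminus O$ is a subrack, and $R=O\sqcup(R\setminus O)$ is a decomposition. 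Indecomposability forces one of the two pieces to be empty; as $s\in O$, we conclude $O=R$, whence $R$ is connected.

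The only step that requires a small amount of care is verifying that the orbit $O$ is actually preserved by each $\ell_r$ as a \emph{set}, not just mapped into itself; once we know this, the proof is a direct bookkeeping exercise. Everything else is a formal consequence of the equivalence between ``$S$ is an ideal'' and ``$S$ and $R\setminus S$ are both subracks''.
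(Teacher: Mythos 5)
Your proof is correct and follows the same route as the paper's (much terser) argument: both directions hinge on the observation that decompositions of $R$ into complementary subracks are exactly the decompositions of $R$ as an $\Inn(R,\rhd)$--set, i.e., that ideals are precisely the $\Inn(R,\rhd)$--stable subsets. Your worry about $\ell_r(O)=O$ versus mere containment is automatically resolved by the fact that $\Inn(R,\rhd)$ is a group containing $\ell_r^{-1}$, so orbits are setwise fixed by every $\ell_r$; otherwise the details you supply are exactly the ones the paper leaves implicit.
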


\begin{proof}
Whenever we have a decomposition of a rack~$R$ into subracks~$S$ and~$T$, the action of~$\Inn(R,\rhd)$ on~$R$ preserves this decomposition. Conversely, any decomposition of a rack~$R$ into subracks is a decomposition as~$\Inn(R,\rhd)$--set.
\end{proof}

\begin{example}\label{ex:Sym(3)}
If~$G=\Sym(3)$ is the conjugation quandle of the symmetric group on the three-element set~$\{1,2,3\}$, then~$G$ decomposes as the disjoint union of the indecomposable subracks~$\{e\},\{(1,2,3)\},\{(1,3,2)\}$ and~$\{(1,2),(1,3),(2,3)\}$. This can be interpreted as a refinement of the decomposition of a group into its conjugacy classes: the class~$\{(1,2,3),(1,3,2)\}$ of the~$3$--cycles, which forms an ideal, splits into two subracks that are not ideals. Besides, if~$Q=\{(1,2),(1,3),(2,3)\}$ denotes the conjugacy class of involutions, then the product~$Q\times Q$~(see Remark~\ref{rem:sums}) is an indecomposable quandle, but it is also the disjoint union of three indecomposable quandles, each isomorphic to~$Q$. This illustrates the fact that decomposability is defined in terms of complementary pairs. Every quandle can be written as a disjoint union of singletons, but this is useless from the point of view of a structure theory.
\end{example}

\begin{definition}\label{def:decomposition}
If a rack~$R$ is decomposable, then a \textit{decomposition} of the rack~$R$ is any way to write~$R$ as a union of disjoint subracks~$S$ and~$T$ that are non-empty. 
\end{definition}

Recall in this context that if~$S$ is a subrack, and the complement~$R\setminus S$ is a subrack, then~$S$ is an ideal, and so is~$R\setminus S$. The following observation will come in handy later.

\begin{lemma}\label{lem:connected_images}
Let a rack~$R$ be decomposed into subracks~$S$ and~$T$. The image of every morphism~\hbox{$C\to R$} from an indecomposable rack~$C$ is entirely contained in~$S$ or in~$T$.
\end{lemma}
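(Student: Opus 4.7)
My plan is to pull back the decomposition $R = S \sqcup T$ along a morphism $f\colon C\to R$ and show that the preimages $f^{-1}(S)$ and $f^{-1}(T)$ are subracks of $C$. Since they are disjoint and cover $C$, indecomposability of $C$ will force one of them to be empty, giving the desired conclusion.

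The key verification is that the preimage of a subrack under a rack morphism is a subrack. Concretely, fix $u\in f^{-1}(S)$ and consider $\ell_u\colon C\to C$. For any $v\in f^{-1}(S)$, the morphism property gives $f(u\rhd v)=f(u)\rhd f(v)\in S$, since $f(u)\in S$ and $S$ is closed under $\ell_{f(u)}$; so $\ell_u(f^{-1}(S))\subseteq f^{-1}(S)$. For the reverse inclusion (which is where Definition~\ref{def:subrack} demands equality, not just containment), I would use that $\ell_{f(u)}$ is a bijection of $S$ because $S$ is a subrack: given $v\in f^{-1}(S)$, there is a unique $w\in C$ with $u\rhd w=v$ because $\ell_u$ is a bijection on $C$, and applying $f$ gives $f(u)\rhd f(w)=f(v)$, whence $f(w)=\ell_{f(u)}^{-1}(f(v))\in S$, so $w\in f^{-1}(S)$. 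The same argument applies to $T$.

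Having established that $C = f^{-1}(S)\sqcup f^{-1}(T)$ is a decomposition into two disjoint subracks, I would invoke Definition~\ref{def:indecomposable}: since $C$ is indecomposable, one of $f^{-1}(S),f^{-1}(T)$ must be empty. In the first case $f(C)\subseteq T$; in the second case $f(C)\subseteq S$. This completes the argument.

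The only subtle point is the surjectivity half of the subrack check; a reader might be tempted to stop after closure under the operation, but the excerpt explicitly warns (in the remark after Definition~\ref{def:subrack}) that this is insufficient in general. Once one remembers to exploit bijectivity of $\ell_{f(u)}$ on $S$, everything reduces to formal manipulation, so I do not expect any genuine obstacle.
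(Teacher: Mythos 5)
Your proof is correct, and it takes a genuinely different route from the paper's. The paper argues by contradiction using the group action: it invokes the equivalence between indecomposable and connected (Proposition~\ref{prop:connected_implies_indecomposable}), picks $s,t\in C$ mapping into $S$ and $T$ respectively, uses transitivity of $\Inn(C,\rhd)$ to find $c$ with $c\rhd s=t$, and pushes $c$ forward to an element of $\Inn(R,\rhd)$ that would move a point of $S$ into $T$, contradicting the fact that a decomposition is $\Inn(R,\rhd)$--invariant. You instead pull the decomposition back: you verify that $f^{-1}(S)$ and $f^{-1}(T)$ are subracks of $C$ --- correctly handling the surjectivity half of Definition~\ref{def:subrack} by exploiting that $\ell_{f(u)}$ restricts to a bijection of $S$ --- and then apply Definition~\ref{def:indecomposable} directly. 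Your version is slightly more self-contained: it needs neither Proposition~\ref{prop:connected_implies_indecomposable} nor the (implicit) fact that a rack morphism intertwines the inner automorphism actions, and it only uses the subrack property of $S$ and $T$ rather than their being ideals. The paper's version, on the other hand, fits the orbit-theoretic viewpoint that recurs throughout (e.g.\ in the proof of Theorem~\ref{thm:basis}) and makes the analogy with equivariant maps of $G$--sets explicit. Both arguments are complete and elementary; there is no gap in yours.
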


\begin{proof}
Assume that we have~$s$ and~$t$ in~$C$ which map to~$S$ and~$T$, respectively. As~$C$ is connected, there is an element~$c$ in the group~$\Inn(C,\rhd)$ such that~\hbox{$c\rhd s=t$}. But then the image of~$c$ in~$\Inn(R,\rhd)$ maps the image of~$s$ to the image of~$t$, contradicting the fact that~$\Inn(R,\rhd)$ preserves the decomposition of~$R$ into~$S$ and~$T$.
\end{proof}

\begin{remark}
While every~$G$--map between homogeneous~$G$--spaces is surjective, not every morphism between connected racks is surjective. For instance, if~$Q$ is a connected quandle, any constant map~$Q\to Q$ is a morphism, but clearly not surjective in general.
\end{remark}


\subsection{Disjoint unions and irreducible racks}

Brieskorn~\cite[\S2]{Brieskorn} has noted that there is a canonical rack structure on the disjoint union of any two racks: 

\begin{proposition}\label{prop:disjoint_union_of_racks}
If~$R$ and~$S$ are two racks, then there is a rack structure on the disjoint union~\hbox{$R\sqcup S$} with
\[
a\rhd b=
\begin{cases}
a\rhd b & a,b\in R\text{ or }a,b\in S\\
b & \text{otherwise}.
\end{cases}
\]
The disjoint union defines a symmetric monoidal structure on the category of all racks. It preserves the full subcategory of finite racks.
\end{proposition}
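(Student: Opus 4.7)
The plan is to prove the statement in three parts: (a) the formula defines a rack structure on~\hbox{$R\sqcup S$}; (b) this extends to a symmetric monoidal structure on~$\calR$; and (c) finiteness is preserved. Part~(c) is immediate from~\hbox{$|R\sqcup S|=|R|+|S|$}, so I focus on (a) and (b).

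For (a), I would first check that each left multiplication~$\ell_a$ on~\hbox{$R\sqcup S$} is a bijection. If~$a\in R$, the piecewise formula shows that~$\ell_a$ restricts to the original~$\ell_a$ on~$R$ and to the identity on~$S$; both are bijections, and their assembly is a bijection of the disjoint union. The case~$a\in S$ is symmetric. Next I would verify the self-distributivity~\hbox{$a\rhd(b\rhd c)=(a\rhd b)\rhd(a\rhd c)$} by a case analysis on the side to which each of~$a$,~$b$,~$c$ belongs. When all three lie on the same side, the identity reduces to the rack axiom in~$R$ or in~$S$. In each of the remaining mixed cases, the rule that~\hbox{$x\rhd y=y$} whenever~$x,y$ straddle the decomposition collapses several of the multiplications in a predictable way: for instance, if~$a\in R$ and~$b,c\in S$, the left side equals~\hbox{$b\rhd c$} computed in~$S$, while on the right~\hbox{$a\rhd b=b$} and~\hbox{$a\rhd c=c$} still lie in~$S$, so the outer product also takes place in~$S$ and agrees. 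The other five mixed cases are handled similarly, using only the piecewise formula.

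For (b), I would define the disjoint union on morphisms by extending a pair~\hbox{$(f,g)$} coordinatewise; preservation of~$\rhd$ is immediate from the piecewise description, giving a bifunctor~\hbox{$\calR\times\calR\to\calR$}. The associator, left and right unitor, and braiding are the obvious set-theoretic bijections~\hbox{$(R\sqcup S)\sqcup T\cong R\sqcup(S\sqcup T)$}, \hbox{$\emptyset\sqcup R\cong R\cong R\sqcup\emptyset$}, and~\hbox{$R\sqcup S\cong S\sqcup R$}; that each is a rack isomorphism is again a short case analysis using the same collapsing principle, where~$\emptyset$ serves as unit because it is initial in~$\calR$ (Remark~\ref{rem:sums}). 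MacLane's pentagon and hexagon coherences then follow automatically from the corresponding identities in the category of sets, since all structure morphisms act as the canonical set-theoretic bijections on underlying sets.

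The only laborious step is the case analyses in (a) and (b), and none of them is conceptually subtle: the operation was designed precisely so that mixing the two summands trivialises~$\rhd$, which turns every mixed case into a straightforward verification. Once (a) is in place, the monoidal structure in (b) is a formality.
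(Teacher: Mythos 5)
Your proposal is correct and matches the paper's intent exactly: the paper simply declares the statement ``a trivial verification,'' and your case analysis for self-distributivity, the bijectivity check for the left multiplications, and the observation that the monoidal coherences are inherited from the category of sets are precisely the verification being alluded to. No further comment is needed.
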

 
The proof is a trivial verification.

\begin{remark}
Any idempotent endomorphism~$e\colon R\to R$ of a rack~$R$ splits: the image~$e(R)$ is a subrack of~$R$ with the induced structure, and~$e$ factors as a composition~$R\to e(R)\to R$, where the composition in the other order is the identity on~$e(R)$. However, note that~$e(R)$ need not be an ideal in~$R$, and therefore, the rack~$R$ need not be decomposed into~$e(R)$ and its complement~\hbox{$R\setminus e(R)$}. Besides, there are racks~$R$ that are not retracts of the rack~$R\sqcup\star$. This happens, for instance, if there is no rack morphism from~$\star$ to~$R$, unlike for quandles.
\end{remark}

\begin{remark}\label{rem:indec_=>_irred}
The disjoint union~$R=S\sqcup T$ as in Proposition~\ref{prop:disjoint_union_of_racks} is decomposable into the two subracks~$S$ and~$T$, but not every decomposition arises in this way, because for a decomposition, we do not require that~$S$ and~$T$ act trivially~(i.e., as pointwise stabilisers) on each other. 
\end{remark}

Brieskorn also sketched how to write any given rack uniquely as a disjoint union of subracks. We can understand this by noting that a partition of a set~$R$ as a disjoint union of non-empty subsets~$R_j$ is the same thing as an equivalence relation on~$R$ with the~$R_j$ as the equivalence classes. If~$R$ is a rack, we are interested in equivalence relations~$\sim$ where inequivalent elements act trivially on each other,~i.e., where we have
\[
a\rhd b=b\text{ and }b\rhd a=a\text{ if }a\not\sim b.
\]
Therefore, if we define
\begin{equation}\label{eq:eqrel}
a\sim b\text{ if }a\rhd b\not=b\text{ or }b\rhd a\not=a,
\end{equation}
this generates the finest equivalence relation on~$R$ that partitions~$R$ into subracks~$R_j$ with the property that all elements of~$R_j$ fix~$R_k$ if~$j\not=k$. 

\begin{definition}
Following Brieskorn, we call a rack~\textit{irreducible} if this partition is trivial, with a unique equivalence class of elements. This is the case if and only if~$R\not=\emptyset$ and~$R=S\sqcup T$ implies~$S=\emptyset$ or~$T=\emptyset$.
\end{definition}

\begin{remark}
There is a graphical way to think about the canonical partition of a rack into a disjoint union of subracks: every rack~$R$ defines a graph with vertices the elements of~$R$ and an edge between~$a$ and~$b$ if~$a$ and~$b$ `commute' in the sense that~$a\rhd b=b$ and~$b\rhd a=a$, similarly to the definition of the graph associated with a right-angled Artin group. The connected components of the complementary~(!) graph are the equivalence classes that partition~$R$ as a disjoint union~$\sqcup_jR_j$ of subracks.
\end{remark}

\begin{proposition}\label{prop:indec_=>_irred}
Indecomposable racks are irreducible.
\end{proposition}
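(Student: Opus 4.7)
The plan is to argue by showing that the notion of being a disjoint union in Brieskorn's sense (witnessing reducibility) is a special case of being a decomposition in the sense of Definition~\ref{def:decomposition} (witnessing decomposability), so that the implication goes through by contrapositive.

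First, I would recall the key structural fact noted just before Example~\ref{ex:simple}: a subset $S\subseteq R$ is an ideal if and only if both $S$ and its complement $R\setminus S$ are subracks. Hence a decomposition of~$R$ in the sense of Definition~\ref{def:decomposition} is nothing other than a partition $R=S\sqcup T$ into two non-empty complementary ideals.

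Next, I would observe that if~$R$ is written as a disjoint union $R=S\sqcup T$ in the sense of Proposition~\ref{prop:disjoint_union_of_racks} with both pieces non-empty, then by the defining formula of that disjoint union, $S$ and $T$ are each preserved by every left-multiplication $\ell_a$ for any $a\in R$: if $a\in S$ and $b\in S$, then $\ell_a(b)=a\rhd b\in S$, and if $a\in T$ and $b\in S$, then $\ell_a(b)=b\in S$. Symmetrically for~$T$. Consequently $S$ and $T$ are complementary subracks, hence complementary non-empty ideals, so $R=S\sqcup T$ is a decomposition in the sense of Definition~\ref{def:decomposition}. The contrapositive then reads: an indecomposable rack admits no non-trivial expression as a Brieskorn disjoint union and is non-empty, i.e., it is irreducible.

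There is essentially no obstacle here; the content lies entirely in unpacking that the Brieskorn commutation condition $a\rhd b=b$, $b\rhd a=a$ for $a\in S$, $b\in T$ is strictly stronger than merely requiring $S$ and $T$ to be subracks with $\ell_s(S)=S$ and $\ell_t(T)=T$, which is precisely what Remark~\ref{rem:indec_=>_irred} anticipates. So the proof is a couple of lines invoking the ideal characterisation of decompositions; the interesting point is rather that the reverse implication fails, as illustrated by $\Sym(3)$ in Example~\ref{ex:Sym(3)}.
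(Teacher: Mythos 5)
Your proof is correct and rests on the same observation as the paper's: in a disjoint union $R=S\sqcup T$ every left-multiplication preserves both pieces, so a non-trivial Brieskorn disjoint union is automatically a decomposition, and the claim follows by contrapositive. The only (cosmetic) difference is that the paper phrases this as ``$R$ is not connected'' and then invokes the equivalence of connectedness and indecomposability from Proposition~\ref{prop:connected_implies_indecomposable}, whereas you conclude directly that $S$ and $T$ are complementary ideals, which makes your version slightly more self-contained.
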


\begin{proof}
Assume that the rack~$(R,\rhd)$ is reducible, i.e., we can write~$R$ as a disjoint union~\hbox{$R=S\sqcup T$} with~\hbox{$S\not=\emptyset\not=T$}. Then, no inner automorphism of~$R$ moves an element from~$T$ into~$S$, as all its left-multiplications map~$T$ into~$T$. This shows that~$R$ is not connected. By contrapositive, indecomposable racks must be irreducible~(see also Remark~\ref{rem:indec_=>_irred} above).
\end{proof}
 
\begin{example}
Here is an example of a rack of order three that is neither irreducible nor homogeneous: take the disjoint union~$R=S\sqcup T$ of the permutation rack~$S=(\{1,2\},(1,2))$ and the singleton rack~$T=\star$. By construction, the rack~$R$ is not irreducible. It is also not homogeneous, as no automorphism of~$R$ maps~$T$ into~$S$.
\end{example}

\begin{example}\label{ex:(12)(34)}
Here is an example of a rack that is irreducible and homogeneous but not indecomposable or connected: take the permutation rack~$(R,\pi)$ on the set~\hbox{$R=\{1,2,3,4\}$} with~\hbox{$\pi=(12)(34)$}. The inner automorphism group is generated by~$\pi$, so that this rack is not connected; it has the obvious decomposition into the cycles of~$\pi$. However, the full automorphism group is dihedral of order 8, so this rack is homogeneous~(compare~\cite[Ex.~1.4]{Andruskiewitsch--Grana}). Also, the left multiplication by any element of~$R$ has no fixed point on~$R$, showing that~$R$ is irreducible. 
More generally, note that any permutation rack~$(R,\pi)$ such that~$\pi$ does not fix any letter of~$R$ is irreducible (since~$a\rhd b=\pi(b)\neq b$ for all~$a,b\in R$). But such rack is connected if and only if~$\pi$ is a cycle of length~$|R|$.
\end{example}

\begin{example}\label{ex:irred_disconn_inhomog}
To have an example of a rack that is irreducible but not homogeneous, take the permutation rack~$(R,\pi)$ with~$R=\{1,2,3\}$ and~$\pi=(1,2)$. Then~$R$ is irreducible, since we have~\hbox{$r\rhd 1=2\neq 1$} for all elements~$r\in R$, and so every element of~$R$ is equivalent to~$1$. But~$(R,\pi)$ is not a homogeneous rack, as every automorphism of~$(R,\pi)$ must fix~$3$.
\end{example}

\begin{example}\label{ex:red_and_hom}
Here is an example of a rack that is homogeneous but not irreducible. Take~$\pi=\id_R$ on any set~$R$ that has at least two elements. Then~$(R,\id_R)$ is a homogeneous rack, since~$\Aut(R,\id_R)=\Sym(R)$ acts transitively on~$R$. On the other hand, as~$R$ is the disjoint union of singletons, it is not irreducible.
\end{example}

We will say more on permutation racks in Section~\ref{sec:perm}.


The following diagram shows the implications that hold between some of the properties of racks discussed so far.
\[
\xymatrix{
\text{connected}\ar@{<=>}[r]\ar@{=>}[d] & \text{indecomposable}\ar@{=>}[d]\\
\text{homogeneous} & \text{irreducible}
}
\]
We have seen examples illustrating that no other implications hold.


Given a rack~$(R,\rhd)$, it seems tempting to try and use the orbits of the~$\Inn(R,\rhd)$--action to find a canonical decomposition of a rack into indecomposable racks. However, this can only be the first step: while the group~$\Inn(R,\rhd)$ acts transitively on each orbit~$S\subseteq R$, the group~$\Inn(S,\rhd)$ no longer needs to, and we will have to study its orbits, and then continue, to get a tree-like\label{tree-like} structure of further refinements. Here, in addition to Example~\ref{ex:Sym(3)}, is another specimen that illustrates this phenomenon.

\begin{example}\label{ex:tree}
Orbits can be decomposable. Let~$\Dih(n)$ be the dihedral quandle of order~$n$, i.e., we take~$\Dih(n)=\bbZ/n$ as a set and~$a\rhd b=2a-b$. The name comes from the fact that this is isomorphic to the conjugacy class of the reflections in the dihedral group of order~$n$. For instance, in~$\Dih(4)$, the classes of~$0$ and~$2$ interchange~$1$ and~$3$, and conversely. Thus, the quandle~$\Dih(4)$ has two orbits. Both are isomorphic to the trivial quandle~$\Dih(2)$ with two elements. More generally, in the quandle~$\Dih(2^k)$, every element is an involution with exactly two fixed points. This quandle decomposes into two quandles isomorphic to~$\Dih(2^{k-1})$. We see the process to decompose~$\Dih(2^k)$ completely into~$2^k$ singletons needs~$k$ steps. This is the \textit{depth} of~$\Dih(2^k)$ as defined by Nelson and Wong~\cite{Nelson--Wong}.
\end{example}


\section{Additive invariants and the Burnside ring of racks}\label{sec:Burnside}

In this section, we introduce the main objects of study in this paper, the Burnside rings of finite racks and finite quandles. To motivate the construction, we first look at Burnside rings of finite groups. Later, we will also consider their crossed variants~(see Section~\ref{sec:crossed}).

Let~$G$ be a finite group. The Grothendieck ring~$\rmK_0(\calS^G,\sqcup,\star)$ of the full category of the category~$\calS^G$ consisting of finite~$G$--sets with respect to disjoint union is, by definition, the~\textit{Burnside ring}~$\rmB(G)$ of~$G$. We refer to~\cite{Solomon, Dress} for original sources. The basic facts about these rings can be found, for instance, in~\cite{tomDieck:1,tomDieck:2,Bouc:Burnside}. For example, it is well known that if~$G$ is a finite group, then an integral basis for the Burnside ring of~$G$ is the set of isomorphism classes of transitive~$G$--sets~$[G/H]$, where~$H$ is taken from the~$G$--orbits of subgroups under conjugation: there is an isomorphism~$G/H\cong G/K$ of~$G$--sets if and only if~$H$ and~$K$ are~$G$--conjugate. 

Before we can define the Burnside rings, we observe that, unlike for finite~$G$--sets, we cannot use categorical sums as in Remark~\ref{rem:sums} or disjoint unions as in Proposition~\ref{prop:disjoint_union_of_racks}, as these are not suitable for our purposes~(see also Section~\ref{sec:Grothendieck} below). Instead, we proceed more carefully and replace these constructions by deconstructions, or decompositions. 


\subsection{Additive invariants}

For a conceptual approach, it will be beneficial to use the following terminology, which is adapted to our context from~\cite[IV.1]{tomDieck:2}. 

\begin{definition}
An~\textit{additive invariant} of racks is an abelian group~$B$ together with a family~$b$ of elements~$b(R)\in B$, one for each rack~$R$, such that we have~$b(R_1)=b(R_2)$ whenever there exists an isomorphism~$R_1\cong R_2$ of racks and~\hbox{$b(R)=b(S)+b(T)$} if the rack~$R$ decomposes into subracks~$S$ and~$T$.
\end{definition}

\begin{example}\label{ex:cardinality}
The abelian group~$\bbZ$ together with the elements~$|R\,|\in\bbZ$ equal to the cardinality of~$R$ forms an additive invariant.
\end{example}

We will see many less-trivial examples later. For now, let us point out some non-examples:

\begin{example}
The number of orbits is~\textit{not} an additive invariant, as Example~\ref{ex:tree} shows. It features a rack~$R$ that decomposes into two orbits~$S$ and~$T$, but~$S$ and~$T$, considered as racks in their own right, have two orbits each.
\end{example}

\begin{example}
Kai and Tamaru~\cite{Kai--Tamaru} have defined an Euler characteristic for quandles, but it is~\textit{not} an additive invariant either, as their examples show.
\end{example}

\begin{example}\label{ex:tau}
If~$R$ is any rack, we can consider the subset
\[
\tau(R) = \{y\in R\mid x\rhd y = y\text{ for all }x\in R\}.
\]
This is the see of fixed points of the canonical~$\Inn(R,\rhd)$--action on~$R$. It is easy to check that~$\tau(R)$ is an ideal, so that we have a decomposition of~$R$ into~$\tau(R)$ and its complement~$R\setminus\tau(R)$, and~$\tau(R)$ is a trivial quandle. We can see that
\[
R\longmapsto|\tau(R)|
\]
does not define an additive invariant of racks as follows. If~$R$ is connected, there are two cases. On the one hand, we can have equality~\hbox{$\tau(R)=R$}. Then~$\tau(R)$ is connected, and so~$R=\tau(R)=\star$. On the other hand, we can have~$\tau(R)=\emptyset$. Then~$R$ cannot be a singleton. If we look at the disconnected Example~\ref{ex:tree}, we have a rack~$R$ of order~$4$ with~$\tau(R)=\emptyset$. However, there is a decomposition~\hbox{$R=S\cup T$} with~$S$ and~$T$ both trivial of order~$2$, so~$\tau(S)=S$ and~$\tau(T)=T$. This non-example will show up again in Remark~\ref{rem:lambda}.
\end{example}

\begin{example}\label{ex:tau'}
A slight variant of the preceding Example~\ref{ex:tau} is given by
\[
\tau'(R) = \{x\in R\mid 
x\rhd x = x\text{ and }
x\rhd y = y\text{ for all } y\in R\text{ with }y\rhd y=y\}.
\]
In particular, if~$Q$ is a quandle, then we have
\[
\tau'(Q) = \{x\in Q\mid x\rhd y = y\text{ for all } y\}.
\]
Generally, it is straightforward to check that~$\tau'(R)$ is a subrack, but this time it is unclear whether it is an ideal. Either way, for the rack~$R=S\cup T$ as in the previous example, we have~$\tau'(R)=\emptyset$, but~$\tau'(S)\not=\emptyset\not=\tau'(T)$, so that~$\tau'$ does not define an additive invariant either.
\end{example}


\subsection{Universal additive invariants}

If~$(B,b)$ is an additive invariant, and~$C$ is another abelian group, and~$\phi\colon B\to C$ is a homomorphism, then~$C$, together with the family~\hbox{$c=\phi(b)$} of elements~$c(R)=\phi(b(R))$ is also an additive invariant. 

\begin{definition}
An additive invariant~$(A,a)$ is~\textit{universal} if  every other additive invariant~$(B,b)$ has the form~$b=\phi(a)$ for a unique homomorphism~$\phi\colon A\to B$. 
\end{definition}

A universal additive invariant is determined up to a distinguished isomorphism. This means that for any other universal additive invariant~$(A',a')$, there exists a unique isomorphism~$A\cong A'$ under which~$a$ and~$a'$ correspond to each other.

\begin{definition}\label{def:B(R)}
We define the abelian group~$\rmB(\calR)$ in terms of generators and relations. It is generated by classes~$b(R)$ of finite racks~$R$ modulo the relations~$b(R_1)=b(R_2)$ if~$R_1\cong R_2$ and~\hbox{$b(R)=b(S)+b(T)$} whenever we have a decomposition of a rack~$R$ into subracks~$S$ and~$T$. We already call~$\rmB(\calR)$ the~\textit{Burnside ring} of finite racks, even though we will define the product on it only later.
\end{definition}

Note that we always have a trivial decomposition of~$R$ into~$R$ and the empty rack~$\emptyset$. It follows that~$b(\emptyset)=0$ in~$\rmB(\calR)$.

\begin{lemma}
Every element in the Burnside ring~$\rmB(\calR)$ of finite racks can be written as a difference~\hbox{$b(R_+)-b(R_-)$} where the~$R_\pm$ are finite racks.
\end{lemma}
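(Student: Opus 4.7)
The plan is to reduce to a basic combining operation on racks. By construction, $\rmB(\calR)$ is generated as an abelian group by the classes $b(R)$ of finite racks, so any element $x$ can be written as a finite $\bbZ$-linear combination $x = \sum_{i} n_i\, b(R_i)$ with $n_i\in\bbZ$ and $R_i$ finite. Splitting indices by sign, I would rewrite this as $x = \sum_{i\in I_+} n_i\, b(R_i) - \sum_{j\in I_-} |n_j|\, b(R_j)$, so the task reduces to showing that any non-negative integer linear combination of classes $b(R_i)$ is itself the class of a single finite rack.

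For this step, the key tool is the Brieskorn disjoint union from Proposition~\ref{prop:disjoint_union_of_racks}. Given finite racks $S_1,\dots,S_m$, the disjoint union $R := S_1\sqcup\cdots\sqcup S_m$ is again a finite rack, and each $S_k$ sits inside $R$ as a subrack (since $\ell_s(S_k)=S_k$ for $s\in S_k$ by the very formula defining~$\rhd$ on the disjoint union). Hence, discarding any empty summands, $R$ decomposes into the subracks $S_1$ and $S_2\sqcup\cdots\sqcup S_m$ in the sense of Definition~\ref{def:decomposition}, and iterating the defining relation $b(R)=b(S)+b(T)$ yields $b(R)=\sum_{k=1}^m b(S_k)$ in $\rmB(\calR)$.

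Applying this to the multiset consisting of $n_i$ copies of $R_i$ for each $i\in I_+$ produces a finite rack $R_+$ with $b(R_+) = \sum_{i\in I_+} n_i\, b(R_i)$, and the analogous construction gives a finite rack $R_-$ with $b(R_-) = \sum_{j\in I_-} |n_j|\, b(R_j)$. Then $x = b(R_+)-b(R_-)$, as required; if both sums are empty, we can take $R_+=R_-=\emptyset$ and use $b(\emptyset)=0$.

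There is essentially no obstacle here; the only verification worth highlighting is that Brieskorn's disjoint union really does present itself as a decomposition in the technical sense of Definition~\ref{def:decomposition}, which is immediate from the explicit formula in Proposition~\ref{prop:disjoint_union_of_racks}.
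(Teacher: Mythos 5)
Your proposal is correct and follows essentially the same route as the paper: split the integer linear combination by sign, then realise each non-negative combination as the class of a single rack via Brieskorn's disjoint union and the relation $b(R\sqcup S)=b(R)+b(S)$. Your explicit check that the disjoint union genuinely yields a decomposition in the sense of Definition~\ref{def:decomposition} is a detail the paper leaves implicit, but the argument is the same.
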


\begin{proof}
Every~$\bbZ$--linear combination is a difference of~$\bbZ$--linear combinations with positive coefficients. So it suffices to show that every~$\bbZ$--linear combination with positive coefficients can be represented by a rack. But that is clear by induction, since~\hbox{$2b(R)=b(R\sqcup R)$} and~\hbox{$b(R)+b(S)=b(R\sqcup S)$}.
\end{proof}

\begin{proposition}\label{prop:universal_additive_invariant}
The abelian group~$\rmB(\calR)$, together with the family of elements~$b(R)$, is a universal additive invariant for racks. 
\end{proposition}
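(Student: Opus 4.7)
The plan is to verify the universal property directly from the presentation of $\rmB(\calR)$ by generators and relations given in Definition~\ref{def:B(R)}. This is essentially formal once the definition is unpacked, and splits cleanly into three checks: that $(\rmB(\calR),b)$ is itself an additive invariant, existence of a classifying homomorphism, and its uniqueness.

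First I would observe that $(\rmB(\calR),b)$ is an additive invariant: the assignment $R\mapsto b(R)$ lands in an abelian group, the isomorphism invariance relation $b(R_1)=b(R_2)$ for $R_1\cong R_2$ is imposed by construction, and the decomposition relation $b(R)=b(S)+b(T)$ is built into the presentation as well. So the only content is the universal property.

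Next, given another additive invariant $(B,b')$, I would define $\phi\colon\rmB(\calR)\to B$ by first specifying a map on the free abelian group generated by the isomorphism classes of finite racks, sending the generator corresponding to $R$ to $b'(R)\in B$. To see that this descends to $\rmB(\calR)$, I need to check that the two families of defining relations are sent to zero. The isomorphism relation is immediate since $b'$ is itself isomorphism-invariant; the decomposition relation $b(R)-b(S)-b(T)$ maps to $b'(R)-b'(S)-b'(T)=0$ because $(B,b')$ is additive. Thus $\phi$ is a well-defined group homomorphism, and by construction $\phi(b(R))=b'(R)$ for every finite rack $R$, which is precisely the required identity $b'=\phi(b)$.

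For uniqueness, I would note that any homomorphism $\psi\colon\rmB(\calR)\to B$ satisfying $\psi(b(R))=b'(R)$ agrees with $\phi$ on the set of generators $\{b(R)\mid R\text{ a finite rack}\}$ of $\rmB(\calR)$ as an abelian group, hence $\psi=\phi$. There is no real obstacle here: the proposition is essentially a tautological consequence of the way $\rmB(\calR)$ was defined, and the closest thing to a substantive point is making sure that the collection of finite racks, though only a proper class, can be replaced by a set of isomorphism class representatives so that the free abelian group in the construction is genuinely defined — which is standard since every finite rack is isomorphic to one with underlying set $\{1,\dots,n\}$ for some $n$.
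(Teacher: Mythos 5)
Your proof is correct and follows essentially the same (entirely formal) route as the paper: define $\phi$ on generators by $b(R)\mapsto b'(R)$, check it kills the defining relations because $(B,b')$ is an additive invariant, and deduce uniqueness from the fact that the classes $b(R)$ generate $\rmB(\calR)$. The extra care you take with the set-theoretic point and with verifying that $(\rmB(\calR),b)$ is itself an additive invariant is fine but not needed beyond what the paper's one-line argument already implicitly contains.
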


\begin{proof}
This is entirely formal. If~$(C,c)$ is an additive invariant, we can define~$\phi\colon\rmB(\calR)\to C$ by sending~$b(R)$ to~$c(R)$. By definition, this is a homomorphism of abelian groups, and it is the unique homomorphism that satisfies~$c(R)=\phi(b(R))$ for all racks~$R$.
\end{proof}


\subsection{Relation to the Grothendieck construction}\label{sec:Grothendieck}

We need to contrast the definition of the Burnside ring~$\rmB(\calR)$ of racks as the universal additive invariant with the Grothendieck construction~$\rmK_0(\calR,\sqcup,\star)$ for the full subcategory of~$\calR$ consisting of the finite racks with respect to the symmetric monoidal structure~$\sqcup$ from Proposition~\ref{prop:disjoint_union_of_racks}. The abelian group~$\rmK_0(\calR,\sqcup,\star)$ is the group completion of the abelian monoid of isomorphism classes~$[R]$ of finite racks. It is generated by classes~$[R]$ represented by racks~$R$, and we have the relations~\hbox{$[S\sqcup T]=[S]+[T]$}. Every element can be written as a difference~$[R]-[S]$, and we have~$[R]=[S]$ if and only if~$R\sqcup T\cong S\sqcup T$ for some~$T$.

There is a surjective homomorphism
\begin{equation}\label{eq:K0_to_BR_surjection}
\rmK_0(\calR,\sqcup,\star)\longrightarrow\rmB(\calR)
\end{equation}
that is the identity on the representatives, i.e., such that~$[R]\mapsto b(R)$ for any rack~$R$.
To see this, note that a disjoint union of racks~\hbox{$R=S\sqcup T$} yields a decomposition of~$R$, and therefore, we have~\hbox{$b(R)=b(S)+b(T)$} on the right-hand side of~\eqref{eq:K0_to_BR_surjection}.

Because of the difference between irreducibility and indecomposability, the homomorphism~\eqref{eq:K0_to_BR_surjection} is not injective. In general, the Grothendieck construction~$\rmK_0(\calR,\sqcup,\star)$ is too big to be useful. 

\begin{remark}
The relation between the K-theory group and the Burnside ring can be compared with the relation between the Euler ring~$\rmU(G)$ and the Burnside ring~$\rmB(G)$ of a compact Lie group~$G$~(see~\cite[Sec.~5.4]{tomDieck:1} and~\cite[Sec.~IV.1]{tomDieck:2} for definitions). There is always a surjection~$\rmU(G)\to\rmB(G)$, and it is an isomorphism for finite groups~$G$, but not in general.
\end{remark}

Two racks~$R$ and~$S$ with~\hbox{$b(R)=b(S)\in\rmB(\calR)$} need not be isomorphic as racks. 

\begin{example}\label{ex:not_injective}
Let us revisit Example~\ref{ex:(12)(34)} of a rack~$R$ that is irreducible but decomposable into~$S$ and~$T$, say. Then, we have
\[
b(R)=b(S)+b(T)=b(S\sqcup T)
\]
in~$\rmB(\calR)$, even though the racks~$R$ and~$S\sqcup T$ are not isomorphic. Therefore, the difference~\hbox{$[R]-[S\sqcup T]$} is an element in~$\rmK_0(\calR,\sqcup,\star)$ that is in the kernel of~\eqref{eq:K0_to_BR_surjection}. This element is non-zero. To see this, recall from Section~\ref{sec:decompositions} that there is an essentially unique way to write any finite rack~$X$ as a disjoint union~\hbox{$X=X_1\sqcup\dots\sqcup X_m$} of irreducible subracks~$X_j$. The function~\hbox{$X\mapsto m$} is additive with respect to disjoint union and defines a homomorphism~$\rmK_0(\calR,\sqcup,\star)\to\bbZ$ of abelian groups that sends~$[R]$ to~$1$ and~$[S\sqcup T]=[S]+[T]$ to~$2$, showing that~$[R]\not=[S\sqcup T]$ in~$\rmK_0(\calR,\sqcup,\star)$. 
\end{example}

\begin{remark}
In contrast to the previous Example~\ref{ex:not_injective}, we will see in Theorem~\ref{thm:inj_on_connected} below that a difference~\hbox{$[C]-[D]$} of \textit{connected} racks is in the kernel of~\eqref{eq:K0_to_BR_surjection} only if~$C\cong D$, so that~$[C]=[D]$. On the other hand, the map from the set of isomorphism classes of finite racks to~$\rmK_0(\calR,\sqcup,\star)$ is injective if and only if~$R\sqcup T\cong S\sqcup T$ implies~$R\cong S$ for all~$S$ and~$T$. Corollary~\ref{cor:dec_cor} is a result in this direction.
\end{remark}

\begin{remark}
The Grothendieck group~$\rmK_0(\calR,\sqcup,\star)$ is the group of components of an algebraic~K-theory space~(or spectrum). It turns out that the Burnside ring~$\rmB(\calR)$ is also the group of components of a~K-theory space~(or spectrum), albeit~\textit{not} one based on the symmetric monoidal structure given by~$\sqcup$. The details will appear elsewhere. It is worth, however, to point out that this is \textit{not} the K-theory of the Lawvere theory of racks in the sense of~\cite{Bohmann--Szymik:1}, which is formed using the symmetric monoidal category of finitely generated free racks with respect to the sum~(categorical coproduct); as we have already mentioned several times, these coproducts are far from finite except for trivial cases.
\end{remark}


\subsection{Products}

From their definition, the Burnside rings are only abelian groups. Now we will establish their multiplicative structure as commutative rings. We achieve this goal with the following result. It uses the product of racks as explained in Remark~\ref{rem:sums}.

\begin{proposition}\label{prop:AR_multiplication}
The abelian group~$\rmB(\calR)$ admits the structure of a commutative ring with unit with respect to a product that satisfies
\begin{equation}\label{eq:AR_multiplication}
b(R')b(R)=b(R'\times R)
\end{equation}
and with multiplicative identity the class~$b(\star)$ of the singleton rack~$\star$.
\end{proposition}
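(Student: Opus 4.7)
The natural plan is to define the product first on representatives via~$b(R')\cdot b(R)=b(R'\times R)$ and then extend it by bilinearity, but the content is to check that this extension is well-defined. By the universal property of~$\rmB(\calR)$ established in Proposition~\ref{prop:universal_additive_invariant}, it suffices to show that for each fixed finite rack~$R'$ the assignment
\[
R\longmapsto b(R'\times R)\in\rmB(\calR)
\]
is an additive invariant. Isomorphism invariance is clear. The essential step is compatibility with decompositions: if~\hbox{$R=S\cup T$} is a decomposition of~$R$ into subracks (so that~$S$ and~$T$ are complementary ideals in~$R$), one must verify that
\[
R'\times R=(R'\times S)\cup(R'\times T)
\]
is a decomposition of~$R'\times R$.

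This is where all the actual work sits, though it is short. Because the rack operation on the product is component-wise, the left multiplications of~$R'\times R$ factor as~\hbox{$\ell_{(r',r)}=\ell_{r'}\times\ell_{r}$}, so for any~$r\in R$ one has~\hbox{$\ell_r(S)=S$} and~\hbox{$\ell_r(T)=T$} exactly because~$S$ and~$T$ are ideals; consequently both~$R'\times S$ and~$R'\times T$ are preserved by every left multiplication of~$R'\times R$, so they are themselves ideals. They are set-theoretically complementary, so this is indeed a decomposition whenever both halves are non-empty (the degenerate case~\hbox{$R'=\emptyset$} reduces to~\hbox{$b(\emptyset)=0$}, which is already recorded). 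Hence~$R\mapsto b(R'\times R)$ is an additive invariant, and the universal property gives a unique homomorphism~$\mu_{R'}\colon\rmB(\calR)\to\rmB(\calR)$ with~$\mu_{R'}(b(R))=b(R'\times R)$.

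To get bilinearity in~$R'$, I would invoke symmetry: there is a canonical rack isomorphism~$R'\times R\cong R\times R'$, so the same argument applied with roles reversed shows that~$R'\mapsto\mu_{R'}(b(R))$ is also an additive invariant of~$R'$. Together this yields a~$\bbZ$--bilinear pairing~\hbox{$\rmB(\calR)\times\rmB(\calR)\to\rmB(\calR)$} satisfying~\eqref{eq:AR_multiplication}. Commutativity and associativity then follow by checking them on the additive generators~$b(R)$, where they reduce to the rack isomorphisms~$R'\times R\cong R\times R'$ and~$(R''\times R')\times R\cong R''\times(R'\times R)$, and propagating via bilinearity. Finally,~\hbox{$\star\times R\cong R$} gives~$b(\star)b(R)=b(R)$ on generators, so~$b(\star)$ is a two-sided unit.

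The main (and only) obstacle is the verification that products preserve decompositions. Everything else is formal, relying on the universal property of~$\rmB(\calR)$ and standard categorical properties of the cartesian product of racks from Remark~\ref{rem:sums}.
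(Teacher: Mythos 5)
Your proposal is correct and follows essentially the same route as the paper: the paper isolates exactly your key step as Lemma~\ref{lem:product_decompositions} (products preserve decompositions, which makes the product well-defined) and then declares associativity, commutativity, and the neutrality of~$\star$ obvious. You simply spell out the proof of that lemma via the component-wise left multiplications and make the appeal to the universal property explicit, which the paper leaves implicit.
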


We start with the following evident observation.

\begin{lemma}\label{lem:product_decompositions}
If a rack~$R$ is decomposed into~$S$ and~$T$, then~$R'\times R$ is decomposed into~$R'\times S$ and~$R'\times T$, and similarly with the roles of the factors interchanged.
\end{lemma}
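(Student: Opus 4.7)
The plan is to verify directly that the subsets $R'\times S$ and $R'\times T$ of $R'\times R$ satisfy the requirements of Definition~\ref{def:decomposition}: they are disjoint, non-empty, and subracks whose union is all of $R'\times R$. Disjointness and the fact that their union equals $R'\times R$ are immediate from the set-level equality $R=S\sqcup T$, and non-emptiness follows from the assumption that $S$, $T$ (as constituents of a decomposition) and $R'$ are non-empty.

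The only real content is checking that $R'\times S$ is a subrack of $R'\times R$, and similarly for $R'\times T$. Because the rack operation on the product is componentwise, for any $(a',s)\in R'\times S$ the left multiplication is
\[
\ell_{(a',s)}(b',t)=(a'\rhd b',\,s\rhd t).
\]
Since $S$ is a subrack of $R$, the map $\ell_s$ restricts to a bijection $S\to S$, so $s\rhd t$ lies in $S$ whenever $t$ does, and every element of $S$ arises this way. Combined with the bijectivity of $\ell_{a'}$ on $R'$, this shows that $\ell_{(a',s)}$ restricts to a bijection $R'\times S\to R'\times S$, which is precisely the condition (Definition~\ref{def:subrack}) for $R'\times S$ to be a subrack. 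The same argument with $S$ replaced by $T$ deals with the complementary piece.

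The clause \emph{``similarly with the roles of the factors interchanged''} then follows by symmetry: exchanging the two factors in the argument above shows that $S\times R'$ and $T\times R'$ decompose $R\times R'$. There is no genuine obstacle; the proof is a routine unwinding of the componentwise product structure. The lemma is recorded mainly to license the next step, where it is used to show that the product~\eqref{eq:AR_multiplication} is compatible with the defining relations of $\rmB(\calR)$ and hence descends to a well-defined multiplication on the Burnside ring.
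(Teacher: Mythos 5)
Your verification is correct and is exactly the routine componentwise check the paper has in mind: the authors state the lemma as an ``evident observation'' and supply no written proof, so your argument simply fills in the intended details. The only quibble is that the lemma as stated does not assume $R'\neq\emptyset$ (in that degenerate case all three products are empty and the additivity relation still holds trivially since $b(\emptyset)=0$), but your explicit flagging of the non-emptiness hypothesis is if anything more careful than the statement itself.
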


\begin{proof}[Proof of Proposition~\ref{prop:AR_multiplication}]
The preceding lemma shows that the product~\eqref{eq:AR_multiplication} is well-defined. The associativity, commutativity, and the neutrality of~$\star$ are then obvious. 
\end{proof}

\begin{definition}
The commutative ring~$\rmB(\calR)$ in Proposition~\ref{prop:AR_multiplication} is the~\textit{Burnside ring of finite racks}.
\end{definition}


\subsection{The Burnside ring of quandles}\label{sec:B(Q)}

The construction we have used for the Burnside ring~$\rmB(\calR)$ of finite racks works for quandles, too, and it results in the~\textit{Burnside ring~$\rmB(\calQ)$ of finite quandles}. In fact, as quandles are special kinds of racks, we have one way of making the relationship between their Burnside rings more precise:

\begin{proposition}\label{prop:quandles_split}
There are morphisms
\[
\rmB(\calQ)\longrightarrow\rmB(\calR)\longrightarrow\rmB(\calQ)
\]
of rings whose composition is the identity. 
\end{proposition}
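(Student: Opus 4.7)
The plan is to exhibit explicit ring homomorphisms in both directions and verify that the composite is the identity.

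For the inclusion $\iota\colon\rmB(\calQ)\to\rmB(\calR)$, I would send $b_\calQ(Q)$ to $b_\calR(Q)$, viewing a quandle as a rack. Applying the universal property (Proposition~\ref{prop:universal_additive_invariant}, read with $\calQ$ in place of $\calR$) produces $\iota$ once I check that $Q\mapsto b_\calR(Q)$ is an additive invariant on finite quandles: any decomposition of a quandle into subquandles is \emph{a fortiori} a decomposition of its underlying rack into subracks, so the relations defining $\rmB(\calQ)$ map to relations holding in $\rmB(\calR)$. Multiplicativity is automatic: the categorical product of two quandles is again a quandle, and the singleton is the multiplicative identity in both Burnside rings.

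For the retraction $\pi\colon\rmB(\calR)\to\rmB(\calQ)$, I would set
\[
Q(R)=\{a\in R\mid a\rhd a=a\},
\]
the fixed-point set of the canonical automorphism $\sigma$ from~\eqref{eq:canonical_automorhphism}, and define $\pi(b_\calR(R))=b_\calQ(Q(R))$. The key lemma is that $R=Q(R)\sqcup(R\setminus Q(R))$ is a decomposition of $R$ into subracks in the sense of Definition~\ref{def:decomposition}. For any $a\in R$ and $b\in Q(R)$, the automorphism property of $\ell_a$ gives
\[
(a\rhd b)\rhd(a\rhd b)=a\rhd(b\rhd b)=a\rhd b,
\]
so $\ell_a$ preserves $Q(R)$; finiteness then promotes injectivity to a bijection of $Q(R)$, and the same for the complement, since $\ell_a$ is already a bijection of $R$. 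Running the computation backwards shows that if $a\rhd b\in Q(R)$ then $b\rhd b=b$ (cancelling the injection $\ell_a$), so $R\setminus Q(R)$ is also closed under $\rhd$. Clearly $Q(R)$ is a quandle.

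To obtain $\pi$, I verify that $R\mapsto b_\calQ(Q(R))$ is an additive invariant: for a decomposition $R=S\sqcup T$, the operation on each subrack is the restriction of the ambient one, whence $Q(R)\cap S=Q(S)$ and similarly for $T$, giving $Q(R)=Q(S)\sqcup Q(T)$ and hence additivity. Multiplicativity follows from the identity $(a,b)\rhd(a,b)=(a\rhd a,b\rhd b)$, which forces $Q(R\times S)=Q(R)\times Q(S)$; and $Q(\star)=\star$ supplies the unit. Finally, if $Q$ is already a quandle then $Q(Q)=Q$, so $\pi\circ\iota=\id_{\rmB(\calQ)}$, as required. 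The only subtlety I anticipate is the use of finiteness to upgrade injectivity of $\ell_a$ on each piece to a bijection; apart from that the argument is purely formal.
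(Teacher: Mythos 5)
Your proposal is correct, but your retraction $\rmB(\calR)\to\rmB(\calQ)$ is not the one the paper uses. The paper's second morphism is the ``untwisting'' map of~\cite[Prop.~4.3]{Szymik:2018}: it sends a rack~$(R,\rhd)$ with canonical automorphism~$\sigma$ to the quandle on the \emph{same underlying set} with operation~$x\rhd' y=\sigma^{-1}(x\rhd y)$, and the composite is the identity because~$\sigma=\id$ for quandles. You instead pass to the maximal sub-quandle~$R^\sigma=\{a\mid a\rhd a=a\}$, which in general changes (indeed may annihilate) the underlying set. The paper itself records your construction in Example~\ref{ex:another_retraction}, proves there exactly your key lemma (that~$R^\sigma$ is an ideal, by the computation~$r\rhd q=(r\rhd q)\rhd(r\rhd q)$), and explicitly notes that this gives a \emph{different} retraction which nevertheless agrees with the untwisting one on~$\rmB(\calQ)$ --- so both composites are the identity. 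Your verification of additivity ($Q(S)=Q(R)\cap S$ for a decomposition, using Lemma-free restriction arguments plus finiteness) and multiplicativity ($Q(R\times S)=Q(R)\times Q(S)$) is sound, and the injectivity-to-bijectivity upgrade via finiteness is exactly the point the paper also flags after Definition~\ref{def:subrack}. What the two approaches buy: the untwisting retraction preserves cardinality and exhibits an equivalence-like relationship between racks and quandles-with-automorphism, whereas your retraction is the one induced by the right adjoint to the inclusion~$\calQ\hookrightarrow\calR$ (since any morphism from a quandle into~$R$ must land in~$R^\sigma$, as it commutes with the natural automorphism~$\sigma$), which is why it also arises from the mark~$\Phi_\star$. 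Either one proves the proposition.
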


This result allows us to think of~$\rmB(\calQ)$ as a subring of~$\rmB(\calR)$.

\begin{proof}
The first morphism is just given by the identity on representatives, i.e., by considering a quandle as a rack.

The second morphism is given by associating to a rack~$(R,\rhd)$ with canonical automorphism~$\sigma$ given on elements by~\hbox{$\sigma(x)=x\rhd x$} the quandle that is supported on the same set~$R$ with the binary operation~\hbox{$x\rhd' y=\sigma^{-1}(x\rhd y)$}, as in~\cite[Prop.~4.3]{Szymik:2018}. This construction is compatible with decompositions and products. Therefore, it induces a morphism of rings, as claimed. 

For quandles, we have~\hbox{$\sigma=\id$}, by definition, and this shows that the composition is the identity.
\end{proof}


\subsection{Functorial additive invariants}

The following remark fits best here, but we will only refer to it in Section~\ref{sec:crossed}.

\begin{remark}\label{rem:functorial}
We can easily extend the definitions to define~\textit{functorial} additive invariants of racks along the lines of~\cite[Sec.~2]{Luck}. Such an invariant consists of a functor~$B$ from the category of finite racks to the category of abelian groups, and for each finite rack~$R$ an element~$b(R)\in B(R)$ such that for any isomorphism~$f\colon R\to S$ the homomorphism~\hbox{$B(f)\colon B(R)\to B(S)$} maps~$b(R)$ to~$b(S)$ and if~$R$ is the disjoint union of two subracks~$S$ and~$T$, then~\hbox{$b(R)=B(j_S)b(S)+B(j_T)b(T)$}, where~$j_S$ and~$j_T$ are the inclusions~$S\to R$ and~$T\to R$, respectively. For example, an additive invariant is a functorial additive invariant with a constant functor.
If~$B$ is a functorial additive invariant and~$\Phi\colon B\to C$ is a natural transformation into another functor~$C$ from the category of finite racks to the category of abelian groups, then the family formed by all~$c(R)=\Phi(R)b(R)$ defines another functorial additive invariant. For example, if we choose~$C$ to be the constant functor with value~$B(\star)$, then there is a canonical natural transformation~$B\to B(\star)$, and this defines an~(ordinary) additive invariant from any functorial additive invariant.
A functorial additive invariant~$(A,a)$ is~\textit{universal} if for any other functorial additive invariant~$(B,b)$ there is a unique natural transformation~$\Phi\colon A\to B$ such that~$b=\Phi(a)$ as above. If~$(A,a)$ is a universal functorial additive invariant, then the images in~$A(\star)$ define a(n ordinary) universal additive invariant.
To construct the universal functorial additive invariant of finite racks, we define for any finite rack~$R$ the abelian group~$\rmB({\calR\!\downarrow\!R})$ generated by classes~\hbox{$b(X\to R)$} of finite racks over~$R$ and subject to the relations~$b(X\to R)=b(Y\to R)$ whenever~$X\to R$ and~$Y\to R$ are isomorphic over~$R$, that is, there exists an isomorphism~$X\to Y$ such that the diagram
\[
\xymatrix{X\ar[rr]\ar[dr]&&Y\ar[dl]\\&R}
\]
commutes, and
$b(X\to R)=b(Y\to R)+b(Z\to R)$ whenever~$X$ is decomposed into subracks~$Y$ and~$Z$, and the maps are the restrictions. This defines, by composition, a functor~$R\mapsto\rmB({\calR\!\downarrow\!R})$ from the category of finite racks to the category of abelian groups, and the classes~\hbox{$b(R=R)\in\rmB({\calR\!\downarrow\!R})$} of the indentities give the universal functorial additive invariant.
\end{remark}


\section{An integral basis for the Burnside ring of racks}\label{sec:basis}

In this section, we describe an integral basis for the Burnside ring of racks.
Let~$\Rcon$ be the set of isomorphism classes of finite racks that are connected. 
Here is the main result.

\begin{theorem}\label{thm:basis}
The elements~$b(R)$ represented by the connected racks~$R$ form an integral basis of the abelian group~$\rmB(\calR)$. In other words, the homomorphism
\begin{equation}\label{eq:basis}
\bbZ\{\Rcon\}\longrightarrow\rmB(\calR), [R]\longmapsto b(R),
\end{equation}
from the free abelian group~$\bbZ\{\Rcon\}$ with basis~$\Rcon$ to~$\rmB(\calR)$ is an isomorphism of abelian groups. In particular, the abelian group~$\rmB(\calR)$ is torsion-free.
\end{theorem}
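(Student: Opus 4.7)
The plan is to exhibit a two-sided inverse to the map \eqref{eq:basis}. Surjectivity is a quick induction on $|R|$: we have $b(\emptyset)=0$, for connected $R$ the class $b(R)$ is already a basis element, and if $R$ is decomposable then Proposition~\ref{prop:connected_implies_indecomposable} supplies a decomposition $R=S\sqcup T$ into strictly smaller subracks, so the relation $b(R)=b(S)+b(T)$ together with the inductive hypothesis expresses $b(R)$ as an integral combination of classes of connected racks. Torsion-freeness then follows automatically from the basis statement.

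For injectivity, the plan is to construct, for each isomorphism class $[C]\in\Rcon$, an additive invariant $\mu_C$ of finite racks valued in $\bbZ$ such that $\mu_C(D)=\delta_{[C],[D]}$ on every finite connected~$D$. By the universality of the Burnside ring (Proposition~\ref{prop:universal_additive_invariant}), each such $\mu_C$ extends to a homomorphism $\rmB(\calR)\to\bbZ$, and the product homomorphism $\rmB(\calR)\to\bigoplus_{[C]\in\Rcon}\bbZ$ composes with \eqref{eq:basis} to the identity on the basis elements, forcing \eqref{eq:basis} to be injective. I would define $\mu_C(R)$ by strong induction on $|R|$: set $\mu_C(\emptyset)=0$; for connected $R$, set $\mu_C(R)=\delta_{[C],[R]}$; and for decomposable $R$, let $O_1,\dots,O_k$ be the orbits of $\Inn(R,\rhd)$ acting on $R$ (with $k\geq 2$ by Proposition~\ref{prop:connected_implies_indecomposable}, so each $|O_i|<|R|$) and set $\mu_C(R)=\sum_{i=1}^k\mu_C(O_i)$.

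Isomorphism invariance is immediate since the orbit decomposition is intrinsic to $R$. The crucial check is additivity on an arbitrary decomposition $R=S\sqcup T$. Since $S$ and $T$ are ideals of $R$ they are unions of $\Inn(R,\rhd)$-orbits, say $S=\bigsqcup_{i\in A}O_i$ and $T=\bigsqcup_{i\notin A}O_i$; moreover each $O_i\subseteq S$ remains an ideal of the subrack $S$, because any $\ell_s$ with $s\in S$ lies in $\Inn(R,\rhd)$ and therefore preserves the orbit $O_i$ setwise. Thus $\bigsqcup_{i\in A}O_i$ is a genuine decomposition of $S$ in the sense of Definition~\ref{def:decomposition}, and since $|S|<|R|$ the inductive hypothesis applied iteratively to $S$ gives $\mu_C(S)=\sum_{i\in A}\mu_C(O_i)$; the same argument applies to $T$, and adding yields $\mu_C(S)+\mu_C(T)=\sum_i\mu_C(O_i)=\mu_C(R)$.

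The main obstacle is precisely this reconciliation: one must verify that the \emph{ad hoc} definition of $\mu_C(R)$ via the canonical $\Inn(R,\rhd)$-orbit decomposition is consistent with the additivity required for \emph{every} decomposition of $R$. The key structural facts enabling this are that ideals of a rack are automatically unions of $\Inn(R,\rhd)$-orbits, so every decomposition is coarser than the orbit decomposition, and that each such orbit remains an ideal inside any ideal containing it, which lets the induction close on each piece of an arbitrary decomposition.
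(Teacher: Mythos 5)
Your proof is correct, and the surjectivity half is identical to the paper's; but your injectivity argument takes a genuinely different route. The paper constructs a single retraction $\rmB(\calR)\to\bbZ\{\Rcon\}$ in one step, sending $R$ to $\sum_{C\in\Pi(R)}[C]$ where $\Pi(R)$ is the set of maximal connected subracks (citing \cite[Prop.~1.17]{Andruskiewitsch--Grana} for the fact that these partition $R$), and verifies additivity using Lemma~\ref{lem:connected_images}: every connected subrack of $R$ lands entirely in $S$ or in $T$, so $\Pi(R)=\Pi(S)\sqcup\Pi(T)$. You instead define the coordinate functions $\mu_C$ by recursion through the iterated $\Inn$--orbit decomposition (the ``tree-like'' refinement the paper mentions before Example~\ref{ex:tree} --- note that orbits need not be connected, so your recursion genuinely has depth greater than one), and your additivity check rests on a different structural fact, namely that every ideal is a union of $\Inn(R,\rhd)$--orbits and each such orbit remains an ideal of any ideal containing it, so every decomposition is coarser than the orbit decomposition and the simultaneous strong induction closes. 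What your route buys is self-containedness: you neither invoke the Andruskiewitsch--Gra\~na partition into maximal connected subracks nor Lemma~\ref{lem:connected_images}, at the cost of a more delicate double induction (defining $\mu_C$ and proving its additivity in the same induction on cardinality). Two cosmetic points: your map naturally lands in $\prod_{[C]}\bbZ$ rather than $\bigoplus_{[C]}\bbZ$, though since the recursion for any fixed $R$ terminates in finitely many connected pieces only finitely many $\mu_C(R)$ are nonzero, so the image does lie in the direct sum; and you write $R=S\sqcup T$ for a general decomposition, whereas in the paper $\sqcup$ is reserved for Brieskorn's disjoint union of Proposition~\ref{prop:disjoint_union_of_racks}, which is a strictly special kind of decomposition.
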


\begin{proof}
We begin by proving the surjectivity of the homomorphism. The abelian group~$\rmB(\calR)$ is defined to be generated by the isomorphism classes of all finite racks. Therefore, it suffices to show that the class of any finite rack can be written as a linear combination of classes of connected finite racks. 
We can see this by induction on the number of the elements of~$R$. The empty rack is not connected but represents the zero element in the group~$\rmB(\calR)$, so we need not worry about it. If a rack~$R$ has exactly one element, it is connected, and we are done. 
Suppose now that~$R$ has more than one element and that the statement is true for all racks with fewer elements than~$R$. If~$R$ is connected, we are immediately done. If not, we can decompose~$R$ into subracks~$S$ and~$T$ with fewer elements. By induction hypothesis, we can write both~$b(S)$ and~$b(T)$ as linear combinations of~$b$'s of connected racks. Therefore, so can~$b(R)=b(S)+b(T)$.

To prove injectivity, we construct a homomorphism
\begin{equation}\label{eq:basis_inverse}
\rmB(\calR)\longrightarrow\bbZ\{\Rcon\}
\end{equation}
whose composition with~\eqref{eq:basis} is the identity. It then follows that~\eqref{eq:basis} is injective as well, and therefore an isomorphism with inverse~\eqref{eq:basis_inverse}.

To construct the homomorphism~\eqref{eq:basis_inverse}, we recall that this is the same as an additive invariant of racks with values in~$\bbZ\{\Rcon\}$. We will define this invariant as follows. Take any finite rack~$R$. It can be written as the disjoint union of its maximal connected subracks~(see~\cite[Prop.~1.17]{Andruskiewitsch--Grana}). More formally, if~$\Pi(R)$ is the set of maximal connected subracks of~$R$, then 
\[
R=\bigcup_{C\in\Pi(R)}\!\!\!C.
\] 
We claim that
\begin{equation}\label{eq:sum}
R\longmapsto\sum_{C\in\Pi(R)}\!\!\![C]
\end{equation}
is an additive invariant of finite racks with values in the abelian group~$\bbZ\{\Rcon\}$. First, it is obvious that the sum in~\eqref{eq:sum} only depends on the isomorphism class of~$R$. This shows invariance under isomorphisms. Second, whenever we have a decomposition of~$R$ into~$S$ and~$T$ as in Definition~\ref{def:decomposition}, we have to show that
\[
\sum_{C\in\Pi(R)}\!\!\![C]=\sum_{C\in\Pi(S)}\!\!\![C]+\sum_{C\in\Pi(T)}\!\!\![C].
\]
This property follows from Lemma~\ref{lem:connected_images}: every~$C$ in~$\Pi(R)$ either is contained in~$S$ or in~$T$, and therefore, the set~$\Pi(R)$ is the disjoint union of~$\Pi(S)$ and~$\Pi(T)$. This shows additivity. Both properties establish that we have an additive invariant, and thus, a homomorphism~\eqref{eq:basis_inverse} with~$b(R)\mapsto\sum_C[C]$.

It remains to be seen that the composition with~\eqref{eq:basis} is the identity. But this is straightforward from the definitions. Let~$D$ be a finite rack that is connected, so that~\hbox{$\Pi(D)=\{D\}$}. Then, the composition is
\[
[D]\longmapsto b(D)\longmapsto\sum_{C\in\Pi(D)}\!\!\![C]=\sum_{C\in\{D\}}\!\!\![C]=[D],
\]
which clearly is the identity.
\end{proof}


\begin{theorem}\label{thm:inj_on_connected}
If~$C$ and~$D$ are connected racks such that~$b(C)=b(D)$ in~$\rmB(\calR)$, then~$C\cong D$.
\end{theorem}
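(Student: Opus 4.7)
The plan is to obtain this statement as a direct corollary of Theorem~\ref{thm:basis}. Since Theorem~\ref{thm:basis} asserts that the map $\bbZ\{\Rcon\}\to\rmB(\calR)$ sending a basis element~$[R]$ to the class~$b(R)$ is an isomorphism, there is an inverse homomorphism $\Psi\colon\rmB(\calR)\to\bbZ\{\Rcon\}$. The strategy is to evaluate~$\Psi$ on both sides of the assumed equality $b(C)=b(D)$ and read off the conclusion.

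To do this, I would recall the explicit description of $\Psi$ from the proof of Theorem~\ref{thm:basis}: it arises from the additive invariant
\[
R\longmapsto\sum_{E\in\Pi(R)}[E],
\]
where $\Pi(R)$ is the partition of $R$ into its maximal connected subracks. The key observation is that when $R$ is itself connected, the partition $\Pi(R)$ consists of the single element $R$, so $\Psi(b(R))=[R]$.

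Applying this to the hypothesis, $\Psi(b(C))=[C]$ and $\Psi(b(D))=[D]$, so $b(C)=b(D)$ forces $[C]=[D]$ in the free abelian group~$\bbZ\{\Rcon\}$. Since $\Rcon$ was defined as the set of isomorphism classes of connected finite racks, and the elements of a free basis are pairwise distinct, equality of $[C]$ and $[D]$ as basis vectors means that $C$ and $D$ represent the same isomorphism class, that is, $C\cong D$.

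There is no real obstacle here, as all the substantive work has been done in Theorem~\ref{thm:basis}; the only thing to verify is the trivial reduction~$\Pi(C)=\{C\}$ for connected~$C$, which is tautological. If one wished to avoid citing Theorem~\ref{thm:basis} directly, one could instead define, for each isomorphism class~$[E]$ of a connected finite rack, the additive invariant $R\mapsto\#\{C\in\Pi(R)\mid C\cong E\}\in\bbZ$; evaluating this invariant on~$C$ and on~$D$ and letting~$[E]$ range over $\Rcon$ would separate the classes of connected racks and give the same conclusion.
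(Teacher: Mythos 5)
Your proposal is correct and matches the paper's approach exactly: the paper derives this theorem immediately from Theorem~\ref{thm:basis}, and your argument merely spells out the details (evaluating the inverse isomorphism on connected classes) that the paper leaves implicit. Nothing further is needed.
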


\begin{proof}
This follows immediately from Theorem~\ref{thm:basis}.
\end{proof}

\begin{corollary}\label{cor:dec_cor}
Let~$R$ be a finite rack that has two decompositions~\hbox{$R= S_1\cup T_1$} and~$R=S_2\cup T_2$. If there is an isomorphism~$T_1\cong T_2$ and the subracks~$S_j$ are connected, then there is also an isomorphism~\hbox{$S_1\cong S_2$}.
\end{corollary}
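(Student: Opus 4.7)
The plan is to reduce the statement to Theorem~\ref{thm:inj_on_connected} by an application of additivity in~$\rmB(\calR)$. Specifically, I would pass to the Burnside ring and use that decompositions translate into equalities of the form~$b(R)=b(S_j)+b(T_j)$, then cancel the~$T_j$--contribution using~$T_1\cong T_2$ to obtain~$b(S_1)=b(S_2)$ in~$\rmB(\calR)$.

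More concretely, first I would note that because~$R=S_1\cup T_1$ and~$R=S_2\cup T_2$ are decompositions in the sense of Definition~\ref{def:decomposition}, the defining relations of the Burnside ring give
\[
b(S_1)+b(T_1)=b(R)=b(S_2)+b(T_2)
\]
in~$\rmB(\calR)$. Next, since isomorphic racks have the same class, the assumption~$T_1\cong T_2$ yields~$b(T_1)=b(T_2)$, and subtracting this common element from the equality above produces~$b(S_1)=b(S_2)$. Because~$\rmB(\calR)$ is an abelian group (in particular torsion-free by Theorem~\ref{thm:basis}), this cancellation is unproblematic.

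Finally, since both~$S_1$ and~$S_2$ are assumed to be connected, I would invoke Theorem~\ref{thm:inj_on_connected} directly to conclude that~$S_1\cong S_2$. There is no real obstacle here: the whole argument is formal once Theorem~\ref{thm:basis} (or equivalently its consequence Theorem~\ref{thm:inj_on_connected}) is in place. The only subtlety worth flagging is that the statement genuinely uses connectedness of the~$S_j$; without it, the equality~$b(S_1)=b(S_2)$ in~$\rmB(\calR)$ would not force an isomorphism, as Example~\ref{ex:not_injective} illustrates.
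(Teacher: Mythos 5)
Your argument is correct and is essentially identical to the paper's own proof: both pass to~$\rmB(\calR)$, use additivity of the two decompositions together with~$b(T_1)=b(T_2)$ to cancel and obtain~$b(S_1)=b(S_2)$, and then apply Theorem~\ref{thm:inj_on_connected} to the connected racks~$S_j$. No issues.
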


\begin{proof}
We have~$b(T_1)=b(T_2)$ and
\[
b(S_1)+b(T_1)=b(S_1\cup T_1)=b(R)=b(S_2\cup T_2)=b(S_2)+b(T_2).
\]
These imply~$b(S_1)=b(S_2)$. When the~$S_j$ are connected, we can use Theorem~\ref{thm:inj_on_connected} to deduce that they are isomorphic.
\end{proof}

It would be interesting to decide whether the connectivity of the~$S_j$ is necessary for this result.


\section{Marks}\label{sec:marks}

Recall that, in the study of Burnside rings of finite groups, each orbit~$G/H$ leads to a ring homomorphism~$\Phi_H\colon\rmB(G)\to\bbZ$ that sends the class of a finite~$G$--set~$X$ to the number of~$H$--fixed points in~$X$. As~$X^H\cong\Mor_{\calS^G}(G/H,X)$, this suggests the following analogue for racks.

\begin{proposition}\label{prop:marks}
Let~$C$ be a connected rack that is finitely generated but not necessarily finite. The assignment
\[
R\longmapsto|\Mor_\calR(C,R)|
\]
defines an additive invariant of finite racks. The resulting map
\begin{equation}\label{eq:C_mark}
\Phi_C\colon\rmB(\calR)\longrightarrow\bbZ
\end{equation}
is a homomorphism of commutative rings.
\end{proposition}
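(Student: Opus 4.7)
The plan is to verify the three properties that establish $\Phi_C$ as a ring homomorphism: finiteness of the count, additivity with respect to decompositions, and multiplicativity with respect to products.

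First, I would check that the quantity $|\Mor_\calR(C,R)|$ is a well-defined non-negative integer. Since $C$ is finitely generated, a morphism $C\to R$ is uniquely determined by its restriction to any finite generating set $B\subseteq C$, so the assignment $f\mapsto f|_B$ embeds $\Mor_\calR(C,R)$ into the finite set $\Map(B,R)$. Invariance under isomorphisms of $R$ is immediate, since an isomorphism $R\cong R'$ induces a bijection $\Mor_\calR(C,R)\cong\Mor_\calR(C,R')$.

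Next, I would establish additivity. Suppose $R$ decomposes into subracks $S$ and $T$. By Proposition~\ref{prop:connected_implies_indecomposable}, the connected rack $C$ is indecomposable, so Lemma~\ref{lem:connected_images} applies: every morphism $C\to R$ has image entirely contained in $S$ or entirely in $T$. Since $S$ and $T$ are disjoint subsets of $R$, these two options are mutually exclusive, and each morphism $C\to S$ (respectively $C\to T$) yields a morphism $C\to R$ by postcomposition with the inclusion. Hence there is a bijection
\[
\Mor_\calR(C,R)\cong\Mor_\calR(C,S)\sqcup\Mor_\calR(C,T),
\]
and taking cardinalities gives $|\Mor_\calR(C,R)|=|\Mor_\calR(C,S)|+|\Mor_\calR(C,T)|$. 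This verifies that $R\mapsto|\Mor_\calR(C,R)|$ is an additive invariant, so by the universal property established in Proposition~\ref{prop:universal_additive_invariant}, it extends uniquely to a homomorphism of abelian groups $\Phi_C\colon\rmB(\calR)\to\bbZ$.

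Finally, for the ring structure, I would use that products in the category of racks are supported on cartesian products of the underlying sets (Remark~\ref{rem:sums}). This gives the natural bijection
\[
\Mor_\calR(C,R\times R')\cong\Mor_\calR(C,R)\times\Mor_\calR(C,R'),
\]
whose cardinality yields $\Phi_C(b(R\times R'))=\Phi_C(b(R))\Phi_C(b(R'))$. Combined with the definition of the product on $\rmB(\calR)$ from Proposition~\ref{prop:AR_multiplication} and the fact that $\Mor_\calR(C,\star)$ is a singleton (so $\Phi_C(b(\star))=1$), this shows that $\Phi_C$ is multiplicative and unital. The only non-formal ingredient is Lemma~\ref{lem:connected_images}, which is where the hypothesis that $C$ is connected does the essential work; everything else is routine bookkeeping via the universal property.
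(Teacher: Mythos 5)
Your proposal is correct and follows essentially the same route as the paper's proof: finiteness from finite generation, additivity via Lemma~\ref{lem:connected_images}, and multiplicativity from the natural bijection $\Mor_\calR(C,R\times S)\cong\Mor_\calR(C,R)\times\Mor_\calR(C,S)$. You simply spell out more of the routine details (the embedding into $\Map(B,R)$ and the unit check) than the paper does.
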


\begin{definition}
The morphism~$\Phi_C$ in~\eqref{eq:C_mark} is the~\textit{mark} defined by the rack~$C$.
\end{definition}

\begin{proof}[Proof of Proposition~\ref{prop:marks}]
As~$C$ is finitely generated, there are only finitely many rack morphisms from~$C$ into any finite rack~$R$. This allows us to count them. By Lemma~\ref{lem:connected_images}, the assignment is additive. Therefore, it induces a homomorphism~$\Phi_C$ of abelian groups. As the functor given by~\hbox{$R\mapsto\Mor_\calR(C,R)$} is, more obviously, also compatible with products, 
\[
\Mor_\calR(C,R\times S)\cong \Mor_\calR(C,R)\times \Mor_\calR(C,S),
\]
the homomorphism~\eqref{eq:C_mark} is also multiplicative, i.e., it is a morphism of commutative rings.
\end{proof} 

\begin{example}
Let us consider~$C=\FR(1)$, the free rack on one generator as in Example~\ref{ex:free}. This is clearly finitely generated, and it is also connected~(even homogeneous). By definition, we have a natural bijection~\hbox{$\Mor_\calR(C,R)\cong R$} of sets, and the corresponding homomorphism~\hbox{$\rmB(\calR)\to\bbZ$} sends the class represented by~$R$ to the number of elements in~$R$. This recovers Example~\ref{ex:cardinality}.
\end{example}

\begin{example}\label{ex:another_retraction}
For~$C=\star$, we have
\[
\Mor_{\calR}(\star,R)\cong\{q\in R\mid q\rhd q =q\}.
\]
As this is the fixed set of the canonical automorphism~$\sigma$, let us write~$R^\sigma$ for it. This set can also be thought of as the maximal sub{\it-quandle} of~$R$. It is even an ideal: for any such~$q$ and an arbitrary~$r$ in~$R$, we have
\[
r\rhd q=r\rhd(q\rhd q)=(r\rhd q)\rhd(r\rhd q),
\]
so~$r\rhd q\in R^\sigma$, too. Therefore, we have a decomposition of~$R$ into~$R^\sigma$ and its complement. The additive invariant~$\Phi_\star(R)=|R^\sigma|$ merely counts the number of elements, but of course we can factor~$\Phi_\star$ through~$\rmB(\calQ)$ by taking the quandle structure into account.
\[
\xymatrix{
&\rmB(\calQ)\ar[rd]&\\
\rmB(\calR)\ar[ru]\ar[rr]&&\bbZ.
}
\]
The arrow~$\rmB(\calR)\to\rmB(\calQ)$ thus constructed is different from the~`untwisting' homomorphism described in Proposition~\ref{prop:quandles_split} above, which preserves the underlying set. Both agree on~$\rmB(\calQ)$, though.
\end{example}

\begin{example}
If, more generally, the rack~$C=C_n$ under consideration is chosen to be an~$n$--cycle for some~\hbox{$n\geqslant 1$}, then~\hbox{$|\Mor_\calR(C_n,R)|$} counts the number of tuples~$(r_1,\dots,r_n)\in R^n$ such that~\hbox{$r_j\rhd r_k = r_{r+1}$} for all~$j$ and~$k$, with indices read mod~$n$. If~$R$ is a quandle, this is just the diagonal~$r_1=\dots=r_n$, which has the same number of elements as~$R$, but for racks that are not quandles, there can be other such tuples, and counting them gives invariants that depend on more than the underlying set of~$R$.
\end{example}

\begin{remark}\label{rem:inj}
A slight variant of the preceding theory allows us to give another proof of Theorem~\ref{thm:inj_on_connected}. If we have~$b(C)=b(D)$, then~$C$ and~$D$ have the same additive invariants. In particular, from Example~\ref{ex:cardinality}, we see that they have the same number of elements. We now note that the assignment~$R\longmapsto|\Inj_\calR(C,R)|$ that sends any rack~$R$ to the number of~\textit{injective} morphisms~$C\to R$ is an additive invariant. This is again clear from Lemma~\ref{lem:connected_images}: if~$R$ is decomposed into~$S$ and~$T$, then every morphism into~$R$ lands in either~$S$ or~$T$, and injectivity is preserved. As the identity is an element in~$\Inj_\calR(C,C)$, we deduce that~$\Inj_\calR(C,D)$ is not empty, and any element~$C\to D$ in it is an injective morphism. As both racks are finite, it is necessarily an isomorphism.
\end{remark}

\begin{remark}\label{rem:knots}
Quandles are famous for being complete invariants of knots. We can now use Proposition~\ref{prop:marks} to turn things around and use knot quandles to produce invariants of racks and quandles! Knot quandles are finitely generated~(by the arcs in any diagram) and connected~\cite[Cor.~15.3]{Joyce}, so every knot~$K$ defines an additive invariant~\hbox{$\Phi_K\colon\rmB(\calR)\to\bbZ$} via its finitely generated knot quandle~$C=\rmQ(K)$.
\end{remark}


If we let~$C$ vary through a set of representatives of isomorphism classes of finitely generated connected racks, we get a morphism
\begin{equation}\label{eq:marks}
\prod_C\Phi_C\colon\rmB(\calR)\longrightarrow\prod_C\bbZ
\end{equation}
of commutative rings. Note that there are infinitely many isomorphism classes of finitely generated connected racks. Therefore, the product on the right-hand side of~\eqref{eq:marks} is not finite, and the image is not contained in the direct sum. The image of the class~$[\star]$ of the terminal rack~$\star$ is the constant family with value~$1$. It turns out that the homomorphism~\eqref{eq:marks} is injective, and it is even enough to restrict to marks defined using finite~$C$:

\begin{theorem}\label{thm:marks}
If~$C$ ranges through a set of representatives of isomorphism classes of finite connected racks, then the morphism~\eqref{eq:marks} of commutative rings is injective.
\end{theorem}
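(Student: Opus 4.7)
The plan is to leverage Theorem~\ref{thm:basis}: any $x\in\rmB(\calR)$ has a unique expansion $x=\sum_D n_D b(D)$ with finite support over iso classes $D\in\Rcon$, and I must show that $\Phi_C(x)=0$ for every finite connected $C$ forces all $n_D=0$. I will introduce auxiliary additive invariants $\Psi_C\colon\rmB(\calR)\to\bbZ$ defined by $\Psi_C(b(R))=|\Inj_\calR(C,R)|$ (additive by Remark~\ref{rem:inj}). The proof proceeds in two steps: first that the $\Psi_C$'s collectively separate elements of $\rmB(\calR)$, and second that each $\Psi_C$ is a $\bbZ$-linear combination of the $\Phi_{C'}$'s.

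For the separation property of the $\Psi_C$'s, observe that for finite connected racks $C$ and $D$, the value $\Psi_C(b(D))=|\Inj_\calR(C,D)|$ vanishes whenever $|C|>|D|$, and also whenever $|C|=|D|$ with $C\not\cong D$ (because any injection between finite sets of equal cardinality is bijective, and a bijective rack morphism is automatically an isomorphism). On the diagonal, $\Psi_C(b(C))=|\Aut(C,\rhd)|\geq 1$. Totally order $\Rcon$ so as to refine ordering by cardinality. If $x=\sum_D n_D b(D)$ is nonzero, let $D_0$ be the largest index in the finite support. For every other $D$ in the support, either $|D|<|D_0|$ or $|D|=|D_0|$ with $D\not\cong D_0$, and in either case $\Psi_{D_0}(b(D))=0$. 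Therefore $\Psi_{D_0}(x)=n_{D_0}\,|\Aut(D_0,\rhd)|\neq 0$.

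To relate $\Phi$ to $\Psi$, I use the canonical image factorization: any morphism $f\colon C\to D$ factors uniquely as $C\twoheadrightarrow C/{\sim_f}\hookrightarrow D$, where $\sim_f$ is the rack-congruence on $C$ given by $c\sim_f c'$ iff $f(c)=f(c')$, and conversely every pair (congruence $\sim$, injection $C/\sim\hookrightarrow D$) yields a morphism $C\to D$. This gives a bijection $\Mor_\calR(C,D)\cong\bigsqcup_{\sim}\Inj_\calR(C/\sim,D)$, with $\sim$ ranging over the finitely many rack-congruences on $C$, hence
$$\Phi_C=\Psi_C+\sum_{\sim\neq\id_C}\Psi_{C/\sim}$$
as homomorphisms $\rmB(\calR)\to\bbZ$. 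For connected $C$, every quotient $C/\sim$ is again a finite connected rack, because inner automorphisms of $C$ respect any rack-congruence and descend to a transitive action on $C/\sim$. Since $|C/\sim|<|C|$ whenever $\sim\neq\id_C$, I can induct on $|C|$ and solve $\Psi_C=\Phi_C-\sum_{\sim\neq\id_C}\Psi_{C/\sim}$ to express each $\Psi_C$ as an integer linear combination of $\Phi_{C'}$'s. Combined with the separation result for the $\Psi$'s, this gives $\bigcap_C\ker\Phi_C\subseteq\bigcap_C\ker\Psi_C=\{0\}$, proving injectivity.

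The main substantive step is the image-factorization identity relating $\Phi_C$ to the $\Psi_{C/\sim}$'s; the triangularity arguments for the $\Psi$'s reduce to counting cardinalities and are automatic once one sorts by $|D|$. The small technical point worth checking is that quotients of connected racks by rack-congruences remain connected, which follows because the left multiplications $\ell_a$ on $C$ respect any congruence and therefore descend to surjective inner automorphisms of $C/\sim$.
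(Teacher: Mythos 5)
Your proposal is correct, and its overall architecture matches the paper's: reduce the morphism counts $\Phi_C$ to injective-morphism counts by induction on $|C|$, then exploit the upper-triangularity of the injective counts (with $|\Aut(C)|$ on the diagonal) to kill all coefficients. The interesting difference is in how the reduction is carried out. The paper stratifies $\Mor_\calR(C,R)$ by the isomorphism class $D$ of the image and identifies each stratum with a twisted product $\Inj_\calR(D,R)\times_{\Aut(D)}\Sur_\calR(C,D)$, concluding that the $|\Mor|$'s \emph{determine} the $|\Inj|$'s without writing an explicit formula. You instead stratify by the kernel congruence, which yields the clean integer-coefficient identity $\Phi_C=\sum_{\sim}\Psi_{C/\sim}$ (grouping terms, $\Phi_C=\Psi_C+\sum_{\sim\neq\id}\Psi_{C/\sim}$) and so exhibits each $\Psi_C$ directly as a $\bbZ$-linear combination of marks; this sidesteps the quotient by $\Aut(D)$ entirely, at the cost of having to check that quotients of finite connected racks by congruences are again finite connected racks (which you do, and which is unproblematic in the finite case, as the paper's remark on Burrows--Tuffley indicates). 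Your final separation step also replaces the paper's partial order by embeddability with the simpler total order refining cardinality, using that an injective morphism between finite racks of equal size is an isomorphism; both orderings do the same job. No gaps.
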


\begin{proof}
First, building on Remark~\ref{rem:inj}, we argue that the numbers~$|\Mor_\calR(C,R)|$, for varying~$C$, determine the numbers~$|\Inj_\calR(C,R)|$ of \textit{injective} rack morphisms~\hbox{$C\to R$}, for varying~$C$. To see this, note that a morphism~$C\to R$ is either injective or has an image that has fewer elements than~$C$:
\begin{equation}\label{eq:split_off_inj}
|\Mor_\calR(C,R)|=|\Inj_\calR(C,R)|+\sum_{|D|<|C|}|\Mor_\calR^D(C,R)|,
\end{equation}
where~$\Mor_\calR^D(C,R)$ is the set of morphisms~$C\to R$ with image isomorphic to~$D$. 

We can write every element of~$\Mor_\calR^D(C,R)$ as a composition~$C\to D\to R$ where the first morphism is surjective and the second morphism is injective, and the group~$\Aut(D)$ acts freely and transitively on the representations~(identifications of~$D$ with the image). Therefore, we have a bijection
\begin{equation}\label{eq:Mor^D_as_twisted_product}
\Mor_\calR^D(C,R)\cong\Inj_\calR(D,R)\times_{\Aut(D)}\Sur_\calR(C,D),
\end{equation}
where we have written~$\Sur_\calR(C,D)$ for the set of surjective morphisms~\hbox{$f\colon C\to D$} and the index~$\Aut(D)$ means that we identify a pair~$(hg,f)$ with the pair~$(h,gf)$ whenever~$g$ is an automorphism of~$D$ and~$h$ is an injection~$D\to R$. Under the bijection, the class~$[hg,f]=[h,gf]$ on the right corresponds to the morphism~$hgf$ on the left. 

By induction, we see that the numbers~$\Inj_\calR(D,R)$ for~$|D|<|C|$ are determined by the numbers~$|\Mor_\calR(D,R)|$, and we see from~\eqref{eq:split_off_inj} and~\eqref{eq:Mor^D_as_twisted_product} that, together with~$|\Mor_\calR(C,R)|$, these  determine the~$|\Inj_\calR(C,R)|$.

We now take an element in the kernel of~\eqref{eq:marks} and write it in the form
\[
\sum_{[D]}\lambda_Db(D),
\]
where~$D$ ranges over the isomorphism classes of connected finite racks. By the first part, we have
\[
0=\sum_{[D]}\lambda_D|\Inj_\calR(C,D)|
\]
for all connected finite racks~$C$. There is a partial order on the set of isomorphism classes of connected finite racks, given by~$[C]\leqslant[D]$ if and only if~$\Inj_\calR(C,D)\not=\emptyset$. Assume that one of the coefficients was not zero. Then we could choose~$[C]$ maximal among those with~$\lambda_C\not=0$. As we have~$|\Inj_\calR(C,D)|=0$ for~$[C]\not\leqslant[D]$, this gives
\[
0=\sum_{[D]}\lambda_D|\Inj_\calR(C,D)|
=\sum_{[C]\leqslant[D]}\lambda_D|\Inj_\calR(C,D)|.
\]
By the maximality of~$C$, this is
\[
\lambda_C|\Inj_\calR(C,C)|=\lambda_C\Aut(C)\not=0,
\]
a contradiction. Therefore, all coefficients must be zero.
\end{proof}

\begin{remark}
Theorems~\ref{thm:basis} and~\ref{thm:marks} together give a~$\bbZ$--linear injection
\[
\bigoplus_C\bbZ\longrightarrow\prod_C\bbZ
\]
which is \textit{not} the inclusion.
\end{remark}


\section{Permutations racks}\label{sec:perm}

We will now look at permutation racks in more detail and explain how the Dress--Siebeneicher theory of the Burnside ring of the infinite cyclic group, developed in~\cite{Dress--Siebeneicher:pro-finite} and~\cite{Dress--Siebeneicher:Witt}, fits into our context.

\begin{proposition}\label{prop:permutations}
A permutation rack~$(R,\pi)$ is connected if and only if~$\pi$ is a transitive cycle,~i.e., one that involves all elements of~$R$. A permutation rack~$(R,\pi)$ is homogeneous if and only if~$\pi$ is a product of disjoint cycles, each of the same length. A permutation rack~$(R,\pi)$ is irreducible if and only if~$\pi\not=\id_R$ or~$R$ has at most one element.
\end{proposition}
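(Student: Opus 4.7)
The plan is to reduce all three conditions to concrete properties of the permutation $\pi$ by first identifying the relevant subgroups of $\Sym(R)$. In a permutation rack, every left multiplication $\ell_a$ equals $\pi$, so $\Inn(R,\pi)=\langle\pi\rangle$; moreover, a bijection $\alpha\colon R\to R$ respects the operation $a\rhd b=\pi(b)$ precisely when $\alpha\pi=\pi\alpha$, so $\Aut(R,\pi)=\rmC_{\Sym(R)}(\pi)$. From here, part~(1) is immediate: connectivity is transitivity of $\langle\pi\rangle$ on $R$, which amounts to $\pi$ being a single cycle of length $|R|$.

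For part~(2), I would invoke the standard description of the centralizer of a permutation in $\Sym(R)$. The centralizer preserves the partition of $R$ into cycles of a fixed length, so if the cycles of $\pi$ have two different lengths it cannot act transitively on $R$. Conversely, if all cycles have the same length $n$ and there are $k$ of them, then $\rmC_{\Sym(R)}(\pi)\cong(\bbZ/n)\wr\Sym(k)$, and this evidently acts transitively on $R$: the cyclic factors move along a cycle, while the symmetric factor permutes cycles.

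For part~(3), I would unpack the equivalence relation~\eqref{eq:eqrel} directly. For a permutation rack, $a\rhd b=b$ is the same as $\pi(b)=b$, so $a\sim b$ iff at least one of $a,b$ is not fixed by $\pi$. If $\pi\neq\id_R$, pick any non-fixed point $b$; then $a\sim b$ for every $a\in R$, so the unique equivalence class is all of $R$ and $(R,\pi)$ is irreducible. If instead $\pi=\id_R$, no two distinct elements are related, so $R$ decomposes as a disjoint union of singletons and is irreducible precisely when $|R|\leqslant 1$.

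None of the steps poses a serious obstacle: parts~(1) and~(3) are direct translations of the definitions, and the only non-trivial input is the classical transitivity criterion for centralizers of permutations used in part~(2), which follows from the wreath-product description of $\rmC_{\Sym(R)}(\pi)$.
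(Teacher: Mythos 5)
Your proposal is correct and follows essentially the same route as the paper: identify $\Inn(R,\pi)=\langle\pi\rangle$ and $\Aut(R,\pi)=\rmC_{\Sym(R)}(\pi)$, use the wreath-product description of the centraliser for homogeneity, and unpack the equivalence relation~\eqref{eq:eqrel} for irreducibility. No gaps.
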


\begin{proof}
The inner automorphism group of a permutation rack~$(R,\pi)$ is the subgroup of the symmetric group~$\Sym(R)$ generated by the permutation~$\pi$. This subgroup acts transitively on~$R$ if and only if~$\pi$ is transitive, i.e., it is a cycle involving all elements of~$R$.

The automorphism group of a permutation rack~$(R,\pi)$ is given by the centraliser of the permutation~$\pi$ in the symmetric group~$\Sym(R)$. 
If a permutation~\hbox{$\pi=\pi_1\cdots\pi_n$} is a product of cycles~$\pi_j$ of the same length, then the automorphism group~\hbox{$\Aut(R,\pi)=\langle\pi_1\rangle\wr\Sym(n)$} is the wreath product of the cyclic group generated by one cycle and the group that permutes the cycles~$\pi_1,\dots,\pi_n$.~(A special case is~$\pi=\id$, when all cycles~$\pi_j$ have length~$1$.) 
In general, any permutation~$\pi$ decomposes as a product of disjoint cycles which can be grouped into conjugacy classes, and~$\Aut(R,\pi)$ is the direct product of the corresponding wreath products. 
In particular, the group~$\Aut(R,\pi)$ acts transitively on the set~$R$ if and only if~$\pi$ is a product of cycles of the same length.

If~$\pi=\id_R$ and~$R$ has at least two elements, then~$R$ is clearly reducible. Assume now that~$\pi\not=\id_R$ is a non-identity permutation of~$R$, and in particular, we have~\hbox{$R\not=\emptyset$}, and there is an~$a$ in~$R$ such that~\hbox{$\pi(a)\not=a$}. It follows from~\eqref{eq:eqrel} that this~$a$ is equivalent to all other elements~$b$ in~$R$, and the rack~$(R,\pi)$ cannot be written as a disjoint union in any non-trivial way.
\end{proof}


Burnside rings can also be defined for pro-finite groups~$G$, and in particular, for the pro-finite completion~$\widehat{\bbZ}$ of the infinite cyclic group~(see~\cite{Dress--Siebeneicher:pro-finite} and~\cite{Dress--Siebeneicher:Witt}). As we only work with finite~$G$--sets, there is a Burnside ring~$\rmB(\bbZ)$ of finite~$\bbZ$--sets, which can be thought of as pairs~$(X,\pi)$, where~$X$ is a finite set and~$\pi$ is a permutation of~$X$. The ring~$\rmB(\bbZ)$ has an integral basis consisting of the classes~$c_n$ of the~$n$--cycles, with~$n\geqslant 1$. Multiplicatively, we have the relation~\hbox{$c_r\cdot c_s=\gcd(r,s)\cdot c_{\lcm(r,s)}$}. In particular, the element~$c_1$ is the identity. The unit~$\bbZ\to\rmB(\bbZ)$ admits a retraction (or augmentation) by the morphism~$\rmB(\bbZ)\to\bbZ$ that sends the class of a permutation to the number of its elements. 

\begin{remark}
The ring structure of the Burnside ring~$\rmB(\bbZ)$ features some surprises. First of all, it has zero-divisors. For instance, we obviously have~$c_p^2=pc_p$ for all prime numbers~$p$, which gives~\hbox{$c_p(c_p-p)=0$}. We also have~`unexpected' units, such as~$1-c_2$, because of~\hbox{$(1-c_2^2)=1-2c_2+c_2^2=1$}. There are no idempotent elements in~$\rmB(\bbZ)$, though, as any element in there comes from~$\rmB(\bbZ/n)$ for some~$n$, and Dress~\cite{Dress} showed that a Burnside ring~$\rmB(G)$ of a finite group~$G$ has non-trivial idempotents if and only if the group~$G$ is not solvable. In particular, it would have to be non-abelian.
\end{remark}

\begin{proposition}\label{prop:permutations_split}
There are morphisms
\begin{equation}\label{eq:permutations_split}
\rmB(\bbZ)\longrightarrow\rmB(\calR)\longrightarrow\rmB(\bbZ)
\end{equation}
of rings whose composition is the identity.
\end{proposition}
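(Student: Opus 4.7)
The plan is to identify $\bbZ$--sets with permutation racks and exploit the canonical automorphism $\sigma$ of a rack in both directions. Concretely, I will construct the first arrow by sending a finite $\bbZ$--set $(X,\pi)$ to the class of the associated permutation rack $(X,\rhd_\pi)$ from Example~\ref{ex:permutation_racks}, and the second arrow by sending a finite rack $(R,\rhd)$ to the $\bbZ$--set $(R,\sigma_R)$, where $\sigma_R$ is the canonical automorphism from~\eqref{eq:canonical_automorhphism}.

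First I would check that the assignment $(X,\pi)\mapsto b(X,\rhd_\pi)$ is an additive invariant of finite $\bbZ$--sets with values in $\rmB(\calR)$, so that by universality it extends to a group homomorphism $\rmB(\bbZ)\to\rmB(\calR)$. Additivity means that if $(X,\pi)=(S,\pi|_S)\sqcup(T,\pi|_T)$ as $\bbZ$--sets, then $(X,\rhd_\pi)$ decomposes as a rack into the subracks $S$ and $T$ in the sense of Definition~\ref{def:decomposition}; this is immediate because $\ell_s(a)=\pi(a)$ for any $s$, so $\pi$--invariance of $S$ (resp.~$T$) is the same as $\ell_x$--invariance for every $x\in X$. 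Multiplicativity follows from the computation $(a,b)\rhd(c,d)=(\pi_R(c),\pi_S(d))$, which identifies the product of two permutation racks with the permutation rack of $\pi_R\times\pi_S$; since the unit $c_1\in\rmB(\bbZ)$ corresponds to the singleton rack $\star$, this gives a unital ring homomorphism.

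For the retraction $\rmB(\calR)\to\rmB(\bbZ)$, I would verify that $R\mapsto(R,\sigma_R)$ defines an additive invariant of finite racks taking values in $\rmB(\bbZ)$. The key point is that $\sigma_R$ is natural: for any subrack $S\subseteq R$ the restriction $\sigma_R|_S$ coincides with $\sigma_S$, so a rack decomposition $R=S\cup T$ yields an equality $(R,\sigma_R)=(S,\sigma_S)\sqcup(T,\sigma_T)$ of $\bbZ$--sets. Multiplicativity comes from the identity
\[
\sigma_{R\times S}(a,b)=(a,b)\rhd(a,b)=(\sigma_R(a),\sigma_S(b)),
\]
which identifies $(R\times S,\sigma_{R\times S})$ with the product of $(R,\sigma_R)$ and $(S,\sigma_S)$ in the category of $\bbZ$--sets, and unitality is clear since $\sigma_\star=\id_\star$. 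Thus one gets a ring homomorphism $\rmB(\calR)\to\rmB(\bbZ)$ by universality (Proposition~\ref{prop:universal_additive_invariant}) together with Proposition~\ref{prop:AR_multiplication}.

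Finally I would check that the composition is the identity on $\rmB(\bbZ)$. It suffices to test this on the generators $[(X,\pi)]$: the composite sends such a class to $[(X,\sigma_{(X,\rhd_\pi)})]$, and since $\sigma(x)=x\rhd x=\pi(x)$ in a permutation rack, we recover $[(X,\pi)]$ exactly. The only place where genuine care is needed is the additivity check for the second map, where one must remember that in a decomposition $R=S\cup T$ both $S$ and $T$ are honest subracks, so $\sigma_R$ genuinely restricts to $\sigma_S$ and $\sigma_T$; this is the step I expect to be the main (if mild) obstacle, but it is built directly into Definition~\ref{def:decomposition}.
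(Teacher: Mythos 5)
Your proposal is correct and follows essentially the same route as the paper: the first arrow sends a finite $\bbZ$--set to its permutation rack, the second sends a rack to its canonical automorphism $\sigma$, and the composite is the identity because $\sigma(x)=x\rhd x=\pi(x)$ for a permutation rack. The additional checks you spell out (that $\sigma$ restricts to subracks in a decomposition, and that both maps are multiplicative and unital) are exactly the points the paper's proof relies on, just stated more explicitly.
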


\begin{proof}
The elements of~$\rmB(\bbZ)$ are given by the isomorphism classes of finite permutation representations~$(X,\pi)$, i.e., finite sets~$X$ together with a permutation~$\pi$ on~$X$. A morphism~$\rmB(\bbZ)\to\rmB(\calR)$ of rings is given by sending the class of such a permutation to the class of the corresponding permutation rack~$(X,\pi)$, where~$w\rhd x=\pi(x)$ for all~$w,x$ in~$X$.

Conversely, given a finite rack~$R$, the canonical automorphism~$\sigma$ from~\eqref{eq:canonical_automorhphism} defines a permutation of~$R$. This is an additive invariant, as~$\sigma$ preserves any decomposition of~$R$ into subracks. Therefore, we find that~$(R,\rhd)\mapsto(R,\sigma)$ is an additive invariant, and it defines a morphism~$\rmB(\calR)\to\rmB(\bbZ)$.

The composition is the identity, as the canonical automorphism of a permutation rack is just the permutation:~$\sigma(x)=x\rhd x=\pi(x)$. 
\end{proof}

\begin{remark}
As any quandle, by definition, has a trivial canonical automorphism, the classes of all quandles in~$\rmB(\calR)$ map to the trivial summand~$\bbZ$ in~$\rmB(\bbZ)$ generated by the trivial permutations~(see Section~\ref{sec:B(Q)} again).
\end{remark}

\begin{remark}
The power operations defined in~\cite[Sec.~6]{Szymik:2018} for racks induce power operations~\hbox{$\Psi^n\colon\rmB(\calR)\to\rmB(\calR)$}, for~$n\in\bbZ$, which send the class of rack~$(R,\rhd)
$ to the class of~$(R,\rhd^n)$, where~$\rhd^n$ is the~$n$--fold iteration of~$\rhd$ in the obvious sense: the element~$x$ has to act~$n$--times on~$y$ to give~$x\rhd^n y$. The morphisms in~\eqref{eq:permutations_split} are compatible with these power operations. Note that for every finite rack~$(R,\rhd)$ there is a positive integer~$n$ such that~$\Psi^n(R,\rhd)=(R,\id_R)$ is trivial: take for~$n$ the exponent of the symmetric group of permutations of~$R$.
\end{remark}


\begin{remark}\label{rem:lambda}
As one application of Theorem~\ref{thm:basis}, we can now see that the power operations~$\Psi^n$ on the ring~$\rmB(\calR)$ do not come from a~$\lambda$--ring structure on~$\rmB(\calR)$. Otherwise, they would satisfy the Frobenius congruence~$\Psi^p(x)\equiv x^p$ mod~$p$ for all prime numbers~$p$. However, this is not the case. Take~$x$ to be the class of the connected quandle with three elements, as in Example~\ref{ex:Sym(3)}. Then~$x^2$ is represented by a connected quandle of order~$9$, and~$\Psi^2(x)$ is the class of the trivial quandle of order~$3$. We show that the difference is not divisible by~$2$. To do so, consider the homomorphism
\begin{equation}\label{eq:epsilon}
\epsilon\colon\rmB(\calR)\longrightarrow\bbZ
\end{equation}
of abelian groups that sends the class of~$\star$ to~$1$ and all other connected racks to~$0$. This is well-defined by Theorem~\ref{thm:basis}. From the definition, we immediately get~$\epsilon(\Psi^2(x))=\epsilon(3)=3$ and~$\epsilon(x^2)=0$. If the difference~$\Psi^2(x)-x^2$ were divisible by~$2$, then applying~$\epsilon$ we would find that~$\epsilon(\Psi^2(x)-x^2)$ would be even, and that is a contradiction. Note that we have~$\epsilon(b(R))=|\tau(R)|$ for \text{connected} racks~$R$, where~$\tau$ is from Example~\ref{ex:tau}. There, we showed that the right-hand side is~\textit{not} an additive invariant of racks, in contrast to the left-hand side. 
\end{remark}


\subsection{Profiles}\label{sec:profiles}

Recall from Remark~\ref{rem:profile} that the profile of a connected rack is defined as the cycle structure of any of its left multiplications. Racks with relatively simple profiles can be classified~(see Lopes--Roseman~\cite[Sec.~8]{Lopes--Roseman}). Here is what we can say in general.

\begin{proposition}\label{prop:profile}
The profile extends to a ring homomorphism
\begin{equation}\label{eq:profile}
\lambda\colon\rmB(\calR)\longrightarrow\rmB(\bbZ).
\end{equation}
\end{proposition}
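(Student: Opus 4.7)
The plan is to exploit Theorem~\ref{thm:basis}, which furnishes a $\bbZ$-basis of $\rmB(\calR)$ consisting of the classes $b(C)$ with $C$ a finite connected rack. On this basis I would declare
\[
\lambda(b(C)):=[(C,\ell_c)]\in\rmB(\bbZ),
\]
where, for any chosen $c\in C$, the permutation $\ell_c\in\Sym(C)$ endows $C$ with the structure of a finite $\bbZ$-set. By Remark~\ref{rem:profile} the conjugacy class of $\ell_c$ in $\Sym(C)$ is independent of $c$, so the right-hand side depends only on the isomorphism class of $C$; extending $\bbZ$-linearly gives a group homomorphism $\lambda\colon\rmB(\calR)\to\rmB(\bbZ)$ that agrees with the profile on connected racks by construction.

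The substantive step is to show multiplicativity. By bilinearity this reduces to verifying $\lambda(b(C))\lambda(b(D))=\lambda(b(C\times D))$ for connected $C$ and $D$. To test the equality I would apply the family of ring homomorphisms $\chi_n\colon\rmB(\bbZ)\to\bbZ$, $[(X,\pi)]\mapsto|X^{\pi^n}|$, for $n\geqslant 1$; a short M\"obius inversion on divisors shows that $\prod_n\chi_n$ is injective, so it suffices to prove each $\chi_n\circ\lambda$ is multiplicative. A direct computation with $(c,d)\in C\times D$ gives
\[
\chi_n(\lambda(b(C))\lambda(b(D)))=|C^{\ell_c^n}|\cdot|D^{\ell_d^n}|=|(C\times D)^{\ell_{(c,d)}^n}|,
\]
independent of $(c,d)$ by connectedness, while $\chi_n(\lambda(b(C\times D)))=\sum_{E\in\Pi(C\times D)}|E^{\ell_{e_E}^n}|$ for any choices $e_E\in E$ in each maximal connected subrack (notation as in the proof of Theorem~\ref{thm:basis}). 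Since these $E$ partition $C\times D$, matching the two totals reduces, term by term, to the local identity $|E^{\ell_{e_E}^n}|=|E^{\ell_{(c,d)}^n}|$ for each $E\in\Pi(C\times D)$.

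The main obstacle is this last identity when $(c,d)\notin E$: the map $\ell_{(c,d)}$ need not even preserve $E$ setwise, so its restriction to $E$ cannot be directly compared with an inner automorphism of $E$. To overcome this I would use that $\Aut(C\times D)$ contains $\Inn(C)\times\Inn(D)$ and hence acts transitively on $C\times D$ by connectedness of both factors; combining this with the formula $\ell_{\alpha(s)}=\alpha\ell_s\alpha^{-1}$ from~\eqref{eq:aut} conjugates $\ell_{(c,d)}$ to an inner automorphism of an appropriate maximal connected subrack, giving control over its cycle structure. The real work is then to track how such an automorphism $\alpha$ permutes the members of $\Pi(C\times D)$ and to verify that the resulting local counts re-assemble correctly into the global sum, which is where I expect the main technical effort.
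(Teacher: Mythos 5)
Your setup---defining $\lambda$ on the integral basis of connected racks via Theorem~\ref{thm:basis} and Remark~\ref{rem:profile}, then testing multiplicativity against the jointly injective fixed-point homomorphisms $\chi_n$ on $\rmB(\bbZ)$---is sound, and the reduction to the identity $\sum_{E\in\Pi(C\times D)}|E^{\ell_{e_E}^n}|=|(C\times D)^{\ell_{(c,d)}^n}|$ is correct and, if anything, more explicit than the paper, whose proof simply asserts that $\ell_{(s,t)}=\ell_s\times\ell_t$ ``corresponds precisely to the product in $\rmB(\bbZ)$''. But you stop exactly where the content lies: you note that $\ell_{(c,d)}$ need not visibly preserve a maximal connected subrack $E$ when $(c,d)\notin E$, propose to conjugate by elements of $\Inn(C)\times\Inn(D)$ using~\eqref{eq:aut}, and then defer the re-assembly of the local counts as ``the main technical effort'' without carrying it out. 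As written, that conjugation merely replaces $\ell_{(c,d)}$ by another left multiplication $\ell_{(c',d')}$ while simultaneously moving $E$ to a different member of $\Pi(C\times D)$, so it does not by itself yield the termwise identity you need. This is a genuine gap, not a routine verification.

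The observation that closes it is the following. The map $(c,d)\mapsto(\ell_c,\ell_d)$ is a surjective rack morphism from $C\times D$ onto $L_C\times L_D$, where $L_C=\{\ell_c:c\in C\}\subseteq\Inn(C)$ is a conjugation quandle and is connected, being the image of the connected rack $C$; likewise for $L_D$. Hence $L_C\times L_D$ is a product of connected quandles and therefore connected, so $\Inn(C\times D)$ acts transitively on it by conjugation. Consequently every $\Inn(C\times D)$--orbit $E$ of $C\times D$ surjects onto $L_C\times L_D$: for any $(c,d)$ there is $(c'',d'')\in E$ with $\ell_{(c'',d'')}=\ell_{(c,d)}$ as permutations of $C\times D$. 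This shows at once that each orbit $E$ is connected (so $\Pi(C\times D)$ is exactly the orbit decomposition), that $\ell_{(c,d)}$ preserves every $E$ setwise, and that $\ell_{(c,d)}|_E$ is a left multiplication of $E$ by one of its own elements, hence has cycle type $\lambda(E)$. Summing $|E^{\ell_{(c,d)}^n}|=|E^{\ell_{e_E}^n}|$ over $E\in\Pi(C\times D)$ gives precisely the identity you reduced to, with no need to track how automorphisms permute $\Pi(C\times D)$ or to average over $\Aut(C\times D)$.
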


\begin{proof}
First, let~$R$ be a connected rack. The profile is a conjugacy class in the permutation group~$\Sym(R)$ that we will encode as a linear combination.
\[
\lambda(R)=\sum_{n=1}^\infty\lambda_n(R)c_n\in\rmB(\bbZ)
\]
of the~$n$--cycles~$c_n$ in~$\rmB(\bbZ)$: the integer~$\lambda_n(R)$ counts the number of~$n$--cycles in left multiplications of~$R$. 

The element~$\lambda(R)$ depends only on the isomorphism class of the connected rack~$R$. As the isomorphism classes of connected racks form an integral basis for~$\rmB(\calR)$ by Theorem~\ref{thm:basis}, there is a unique homomorphism~\eqref{eq:profile} of abelian groups that extends the profile. It remains to be checked that this homomorphism~$\lambda$ is compatible with the multiplicative structure.

As the rack structure on a cartesian product~$R=S\times T$ is given component-wise, 
\[
(s,t)\rhd(s',t')=(s\rhd s',t\rhd t')
\]
we see that the left multiplication~$\ell_{(s,t)}=\ell_s\times\ell_t$ is the product of the left multiplications. As this corresponds precisely to the product in the Burnside ring~$\rmB(\bbZ)$ of permutations, we have shown that the equality~\hbox{$\lambda(S\times T)=\lambda(S)\cdot\lambda(T)$} holds for connected racks. It follows for all of~$\rmB(\calR)$ by linearity.
\end{proof}

\begin{remark}
It is clear that the profile~$\lambda\colon\rmB(\calR)\to\rmB(\bbZ)$ from Proposition~\ref{prop:profile} is very different from the retraction~$\rmB(\calR)\to\rmB(\bbZ)$ in Proposition~\ref{prop:permutations_split}. The latter sends all quandles to trivial permutations, whereas the profiles of quandles are only trivial for trivial quandles.
\end{remark}


\section{Prime quandles and products with cycles}\label{sec:prime}

In this section, we examine the multiplicative structure of the Burnside rings of racks and quandles in greater detail. We start with the following cancellation result. It very well illustrates the value of our novel perspective: the Burnside rings do not appear in the statement, but in the proof.

\begin{theorem}\label{thm:cancellation}
If~$R$ and~$S$ are connected finite racks, and~$T$ is a finite rack with an element~$t$ such that~$t\rhd t =t$~(for example, a non-empty quandle) such that there exists an isomorphism~\hbox{$R\times T\cong S\times T$}, then we have~$R\cong S$.
\end{theorem}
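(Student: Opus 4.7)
The plan is to translate the problem into the Burnside ring and then invoke the mark machinery. From the isomorphism $R\times T\cong S\times T$ we get $b(R\times T)=b(S\times T)$ in $\rmB(\calR)$, and by the product formula of Proposition~\ref{prop:AR_multiplication} this reads
\[
b(R)\,b(T)=b(S)\,b(T).
\]
The aim is to cancel $b(T)$, deduce $b(R)=b(S)$, and then apply Theorem~\ref{thm:inj_on_connected} to the connected racks $R$ and $S$ to conclude $R\cong S$.

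To perform the cancellation I would apply the marks $\Phi_C$ of Proposition~\ref{prop:marks}, ranging over (isomorphism classes of) finite connected racks $C$. Each $\Phi_C$ is a ring homomorphism $\rmB(\calR)\to\bbZ$, so the displayed equation becomes
\[
\Phi_C(R)\,\Phi_C(T)=\Phi_C(S)\,\Phi_C(T).
\]
The key observation is that the hypothesis on $T$ forces $\Phi_C(T)\geqslant 1$ for every finite connected $C$: if $t\in T$ satisfies $t\rhd t=t$, then the constant map $C\to T$ with value $t$ is a rack morphism (the morphism condition reduces to $t=t\rhd t$), so $\Mor_\calR(C,T)$ is nonempty. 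Therefore $\Phi_C(T)$ is a positive, hence nonzero, integer, and we may cancel in $\bbZ$ to obtain $\Phi_C(R)=\Phi_C(S)$ for every finite connected $C$.

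By Theorem~\ref{thm:marks}, the combined homomorphism $\prod_C\Phi_C$ is injective on $\rmB(\calR)$, so the equality of all these marks yields $b(R)=b(S)$ in $\rmB(\calR)$. Since $R$ and $S$ are connected, Theorem~\ref{thm:inj_on_connected} then gives $R\cong S$, as required. There is no real obstacle here beyond verifying that the constant map $C\to T$ at $t$ is a morphism, which is the crucial role of the idempotent hypothesis on $T$; everything else is an essentially formal combination of the multiplicativity of marks and the injectivity results already established.
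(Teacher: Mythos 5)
Your proposal is correct and follows essentially the same route as the paper: pass to $b(R)b(T)=b(S)b(T)$ in $\rmB(\calR)$, note that the idempotent element $t$ gives a constant morphism $C\to T$ from every finite connected rack $C$ so that all marks of $b(T)$ are positive, cancel mark-by-mark using Theorem~\ref{thm:marks}, and conclude with Theorem~\ref{thm:inj_on_connected}. The only difference is that you spell out the verification that the constant map at $t$ is a rack morphism, which the paper leaves implicit.
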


\begin{proof}
From~$R\times T\cong S\times T$, we get~$b(R)b(T)=b(R\times T)=b(S\times T)=b(S)b(T)$. By hypothesis, there is a morphism~$C\to T$ from every connected rack to~$T$. All marks of~$b(T)$ are non-zero, and this implies that~$b(T)$ is not a zero-divisor. Then we know~$b(R)=b(S)$, and~$R\cong S$ now follows from Theorem~\ref{thm:inj_on_connected}.
\end{proof}

\begin{remark}
The proof shows, in particular, that the class~$b(Q)$ of any finite, non-empty quandle~$Q$ is a non-zero element that is not a zero-divisor in the Burnside ring.
\end{remark}

\begin{example}\label{ex:bad_product}
The product structure established in Proposition~\ref{prop:AR_multiplication} does not play well with the integral basis consisting of the classes of connected racks: a product of connected racks does not need to be connected. For instance, if~$R$ is the non-trivial permutation rack on a two-element set, then~$R\times R$ is not connected; it can be decomposed into two copies of~$R$. Example~\ref{ex:Sym(3)} contains another instance.
\end{example}

The~cartesian product of two connected racks is connected if at least one factor is a quandle~(see~\cite[Lem.~1.20]{Andruskiewitsch--Grana}). This implies that the cartesian product defines an abelian monoid structure on the set~$\Qcon$ of isomorphism classes of finite quandles that are connected. Thus, the free abelian group~$\bbZ\{\Qcon\}$ with basis given by this set is a commutative ring, the monoid ring for this monoid. 

\begin{theorem}\label{thm:Burnside_for_quandles_is_monoidal}
There is an isomorphism
\begin{equation}\label{eq:additive_generators_Q}
\rmB(\calQ)\cong\bbZ\{\Qcon\}
\end{equation}
of rings between the Burnside ring of finite quandles and the monoid ring of the abelian monoid of isomorphism classes of connected quandles.
\end{theorem}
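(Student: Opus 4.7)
The plan is to split the isomorphism into its additive and multiplicative parts. First I would establish that the classes $b(Q)$ of connected finite quandles form an integral basis of the abelian group $\rmB(\calQ)$, by rerunning the proof of Theorem~\ref{thm:basis} inside the quandle category. The only observations needed are that any decomposition of a quandle as a disjoint union of subracks is automatically a decomposition into sub-quandles (the identity $a\rhd a=a$ is inherited by subsets), that every finite quandle is the disjoint union of its maximal connected sub-quandles, and that Lemma~\ref{lem:connected_images} applies unchanged. These give an additive invariant $Q\mapsto\sum_{C\in\Pi(Q)}[C]\in\bbZ\{\Qcon\}$, where $\Pi(Q)$ is the set of maximal connected sub-quandles of $Q$, that is left inverse to the tautological map $[Q]\mapsto b(Q)$, producing an additive isomorphism $\bbZ\{\Qcon\}\cong\rmB(\calQ)$.

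Next I would upgrade this to a ring isomorphism. The monoid ring $\bbZ\{\Qcon\}$ has multiplication $[C]\cdot[D]=[C\times D]$ on basis elements, and this is well-defined precisely because the cartesian product of two connected quandles is again a connected quandle: the external input from \cite[Lem.~1.20]{Andruskiewitsch--Grana} applies since the product of two quandles is a quandle (if $a\rhd a=a$ and $a'\rhd a'=a'$, then $(a,a')\rhd(a,a')=(a,a')$). On the Burnside ring side, Proposition~\ref{prop:AR_multiplication} yields $b(C)b(D)=b(C\times D)$ in $\rmB(\calQ)$. Hence the tautological map carries the monoid product on basis elements to the quandle product, sends the neutral element $[\star]$ of the monoid to the unit $b(\star)$ of $\rmB(\calQ)$, and extends linearly to a ring homomorphism. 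Combined with the additive isomorphism from the first step, this gives the desired ring isomorphism~\eqref{eq:additive_generators_Q}.

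The main conceptual hurdle is the preservation of connectedness under cartesian products of quandles; without this, the monoid structure on $\Qcon$ would not be defined and one could not even formulate the target. That the analogous statement fails for racks is illustrated by Example~\ref{ex:bad_product}, which is exactly why the multiplicative structure of $\rmB(\calR)$ is more intricate than that of $\rmB(\calQ)$ and why this theorem singles out quandles as a particularly tractable portion of the Burnside ring picture.
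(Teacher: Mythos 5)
Your proposal is correct and follows essentially the same route as the paper: the paper's own proof likewise reruns the argument of Theorem~\ref{thm:basis} inside the quandle category to get the additive isomorphism and then observes that both multiplications are derived from the cartesian product, with the connectedness of products of connected quandles (via~\cite[Lem.~1.20]{Andruskiewitsch--Grana}) as the key input. You have simply spelled out the details that the paper leaves implicit.
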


Before we prove this result, we give two remarks to provide more context for it.

\begin{remark}
The difference between Theorem~\ref{thm:Burnside_for_quandles_is_monoidal}, which is about quandles, and Theorem~\ref{thm:basis} for racks is that the latter features only an isomorphism of abelian groups, as there is no useful multiplication on the isomorphism classes of finite connected racks by Example~\ref{ex:bad_product}.
\end{remark}

\begin{remark}
The reader may wonder why not turn Theorem~\ref{thm:Burnside_for_quandles_is_monoidal} with isomorphism~\eqref{eq:additive_generators_Q} into the definition of the Burnside ring of finite quandles. This is possible, and then our definition would turn into the theorem that the decomposition into connected components is the universal additive invariant for finite quandles.
\end{remark}

\begin{proof}[Proof of Theorem~\ref{thm:Burnside_for_quandles_is_monoidal}]
In a fashion similar to the proof of Theorem~\ref{thm:basis}, we can construct a homomorphism~\hbox{$\bbZ\{\Qcon\}\to\rmB(\calQ)$} and prove that it is an isomorphism. It remains to be observed that under this isomorphism, the multiplications on both sides correspond: both are derived from the cartesian product.
\end{proof}


\subsection{Prime quandles}

Theorem~\ref{thm:Burnside_for_quandles_is_monoidal} leaves us with the problem of understanding the structure of the abelian monoid~$\Qcon$ with respect to the cartesian product. The following definition appears to be new.

\begin{definition}\label{def:prime}
We call a connected quandle~\textit{prime} if it is not a singleton and if it can only be written as a product in trivial ways---with one factor a singleton.
\end{definition}

\begin{examples}
As there are no connected quandles with two elements, the smallest prime quandle has three elements~(see Example~\ref{ex:simple}). It follows that all connected quandles with fewer than nine elements are prime. One of these is the tetrahedral quandle of order~$4$, which consists of one of the conjugacy classes of the~$3$--cycles in the alternating group of order~$12$. Of course, every connected quandle with a prime number of elements is prime. These have been classified~\cite{ESG}, and there are~\hbox{$p-2$} isomorphism classes of prime quandles of order~$p$: the conjugacy classes of size~$p$ in the holomorph~\hbox{$\bbZ/p\rtimes(\bbZ/p)^\times$}, i.e., the affine group of the~$1$--dimensional vector space over the prime field~$\bbF_p$. We see that there are infinitely many prime quandles.
\end{examples}

Let~$\primeQ$ be the subset of~$\Qcon$ that consists of the isomorphism classes of prime quandles.

\begin{theorem}\label{thm:multiplicative_generators}
There is a surjective ring homomorphism
\[
\bbZ[\primeQ]\longrightarrow\rmB(\calQ)
\]
from the polynomial ring~$\bbZ[\primeQ]$ on the set~$\primeQ$ of variables onto the Burnside ring of finite quandles.
\end{theorem}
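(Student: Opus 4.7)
The plan is to combine Theorem~\ref{thm:Burnside_for_quandles_is_monoidal}, which identifies~$\rmB(\calQ)$ with the monoid ring~$\bbZ\{\Qcon\}$ on the set of isomorphism classes of finite connected quandles under cartesian product, with the observation that~$\bbZ[\primeQ]$ is itself the monoid ring of the free commutative monoid on the set~$\primeQ$. Under these identifications, the sought homomorphism is the ring morphism induced by the monoid map that sends a formal monomial in prime quandles to the cartesian product of the corresponding prime quandles, taken in~$\Qcon$. This is well-defined because the cartesian product of connected quandles is connected~\cite[Lem.~1.20]{Andruskiewitsch--Grana}. Surjectivity then amounts to showing that every class~$[Q]\in\Qcon$ lies in the image of the monoid map, i.e., that every finite connected quandle~$Q$ is isomorphic to a cartesian product of prime quandles (with the singleton~$\star$ interpreted as the empty product).

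The main step is therefore to prove, by induction on the cardinality~$|Q|$, that every finite connected quandle is a product of prime quandles. If~$Q$ is a singleton, it is the empty product. If~$Q$ is prime, there is nothing to do. Otherwise, by Definition~\ref{def:prime}, we can write~$Q\cong Q_1\times Q_2$ non-trivially, meaning neither~$Q_i$ is a singleton. Since~$|Q|=|Q_1|\cdot|Q_2|$ and both factors are finite with at least two elements, we have~\hbox{$|Q_i|<|Q|$} for~$i=1,2$.

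The key point that needs justification is that both factors~$Q_i$ are themselves connected, so the induction hypothesis applies. For this, I use that the projection~$\pi_i\colon Q_1\times Q_2\to Q_i$ is a surjective morphism of quandles (both factors are non-empty). If~$Q_1\times Q_2$ is connected, then its image under~$\pi_i$ is contained in a single connected component of~$Q_i$ by Lemma~\ref{lem:connected_images}, and since~$\pi_i$ is surjective, all of~$Q_i$ equals that single component, so~$Q_i$ is connected. By the induction hypothesis, each~$Q_i$ is a product of prime quandles, hence so is~$Q$.

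I expect no substantial obstacle here: the argument is a straightforward application of Theorem~\ref{thm:Burnside_for_quandles_is_monoidal}, together with a well-founded induction that terminates because non-trivial factors in any decomposition of a finite connected quandle are strictly smaller connected quandles. The most delicate point is verifying connectedness of the factors, which is immediate from the surjectivity of the projections and the preservation of connectedness under surjective quandle morphisms. Note that this theorem does not assert uniqueness of such a factorisation, in line with the caveat in Remark~\ref{rem:uniqueness}; hence the map is only claimed to be surjective, not an isomorphism.
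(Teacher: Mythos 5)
Your proposal is correct and follows essentially the same route as the paper: identify~$\rmB(\calQ)$ with the monoid ring~$\bbZ\{\Qcon\}$ via Theorem~\ref{thm:Burnside_for_quandles_is_monoidal}, and show the monoid map from the free commutative monoid on~$\primeQ$ to~$\Qcon$ is surjective by induction on cardinality. The paper merely asserts that induction in one sentence; your verification that the factors of a non-trivial product decomposition are themselves connected (via surjectivity of the projections and Lemma~\ref{lem:connected_images}) is a welcome filling-in of the detail the paper leaves implicit.
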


\begin{proof}
Let~$\bbN\{\primeQ\}$ be the free abelian monoid with basis~$\primeQ$, written multiplicatively. Then, by the universal property, there is a morphism
\begin{equation}\label{eq:abmon}
\bbN\{\primeQ\}\longrightarrow\Qcon,\;\prod_j[P_j]^{n_j}\longmapsto\bigg[\prod_jP_j^{n_j}\bigg],
\end{equation}
of abelian monoids that extends the inclusion~$\primeQ\to\Qcon$ of the subset~$\primeQ$ into the abelian monoid~$\Qcon$. By induction on the number of elements, every connected quandle is isomorphic to a product of prime quandles. This shows that the morphism~\eqref{eq:abmon} of abelian monoids is surjective.

The surjective morphism~\eqref{eq:abmon} of abelian monoids induces a surjective morphism
\[
\bbZ[\primeQ]=\bbZ\{\bbN\{\primeQ\}\}\longrightarrow\bbZ\{\Qcon\}
\]
of rings from the polynomial ring~$\bbZ[\primeQ]$ on the set~$\primeQ$ of variables, which is the monoid ring of the abelian monoid~$\bbN\{\primeQ\}$. Composition with the isomorphism~\eqref{eq:additive_generators_Q} from Theorem~\ref{thm:Burnside_for_quandles_is_monoidal} gives the result.
\end{proof}


\begin{remark}\label{rem:uniqueness}
It would be interesting to decide whether or not the morphism in Theorem~\ref{thm:multiplicative_generators} is injective as well. If this were the case, the Burnside ring of quandles would be polynomial in a set of variables that corresponds to the prime quandles.
\end{remark}


\subsection{Products with cycles}

Let~$\Pcon$ denote the set of isomorphism classes of connected permutation racks. Recall from Proposition~\ref{prop:permutations} that these are represented by~$n$--cycles for~$n\geqslant 1$. As before, we let~$\Qcon$ and~$\Rcon$ denote the sets of isomorphism classes of connected quandles and racks, respectively.

\begin{theorem}\label{thm:CxQ<R}
The cartesian product induces an injection
\[
\Pcon\times\Qcon\longrightarrow\Rcon,\,([C],[Q])\longmapsto[C\times Q].
\]
\end{theorem}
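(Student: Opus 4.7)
The plan has two parts: first to verify that $C\times Q\in\Rcon$, so the map is well-defined, and second to prove injectivity by reconstructing each factor from the product up to isomorphism. Well-definedness follows immediately from the Andruskiewitsch--Gra\~na result, cited just before Theorem~\ref{thm:Burnside_for_quandles_is_monoidal}, that the product of two connected racks is connected whenever at least one factor is a quandle.

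For injectivity I would reconstruct $C$ from $R=C\times Q$ first. The canonical automorphism of $R$ is
\[
\sigma_R(c,q) = (c,q)\rhd(c,q) = (c\rhd c,\,q\rhd q) = (\pi_C(c),q),
\]
since $Q$ is a quandle. As a permutation of the underlying set of $R$, this is a disjoint union of $|Q|$ cycles of common length $|C|$. Applying the ring homomorphism $\rmB(\calR)\to\rmB(\bbZ)$ from Proposition~\ref{prop:permutations_split} therefore sends $[C\times Q]$ to $|Q|\cdot c_{|C|}$, and because the classes $c_n$ are $\bbZ$-linearly independent in $\rmB(\bbZ)$, both $|C|$ and $|Q|$ are determined by the isomorphism class of~$R$. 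By Proposition~\ref{prop:permutations} the isomorphism class of a connected permutation rack is determined by its cardinality, so $C$ is recovered up to isomorphism.

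To reconstruct~$Q$, I would pass to the quotient of $R$ by the equivalence relation generated by the $\sigma_R$-orbits. The key identity is $\ell_{\sigma(r)}=\ell_r$, which holds in every rack: the self-distributivity law gives $r\rhd(r\rhd s) = (r\rhd r)\rhd(r\rhd s) = \sigma(r)\rhd\ell_r(s)$, and cancelling the bijection $\ell_r$ yields $\ell_r = \ell_{\sigma(r)}$. Combined with the fact that $\sigma$ is central, hence commutes with every $\ell_r$, this shows that the rack operation descends to a well-defined rack structure on $R/\langle\sigma_R\rangle$, whose canonical automorphism is the identity; that is, the quotient is a quandle. For $R=C\times Q$ the $\sigma_R$-orbits are exactly the fibres $C\times\{q\}$ for $q\in Q$, and the induced operation on the quotient carries $[c,q]\rhd[c',q']$ to $[\pi_C(c'),q\rhd q']=[q\rhd q']$, so $R/\langle\sigma_R\rangle\cong Q$ canonically as quandles. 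Hence the isomorphism class of $Q$ is determined by that of~$R$, and the map is injective.

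The main obstacle I anticipate is not conceptual but notational: verifying carefully that $\rhd$ descends to $R/\langle\sigma_R\rangle$ requires the identity $\ell_r=\ell_{\sigma(r)}$ on the left argument (handled by the calculation above) and the naturality $\sigma(r\rhd s)=r\rhd\sigma(s)$ on the right argument. Once these are in hand, the rest of the argument is bookkeeping.
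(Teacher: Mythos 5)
Your proposal is correct and follows essentially the same route as the paper: both recover $C$ from the cycle structure of the canonical automorphism $\sigma_{C\times Q}=\sigma_C\times\id_Q$ and recover $Q$ as the associated quandle, i.e., the quotient of $C\times Q$ by the $\sigma$-orbits (the paper phrases this as the left adjoint to the inclusion of quandles into racks, while you verify the descent of $\rhd$ explicitly via $\ell_{\sigma(r)}=\ell_r$ and the naturality of $\sigma$). Your detour through $\rmB(\bbZ)$ and the linear independence of the $c_n$ is just a repackaging of the paper's observation that the cycle decomposition of $\sigma_{C\times Q}$ determines $C$.
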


\begin{proof}
The product of two connected racks is connected if at least one of the factors is a quandle~(see~\cite[Lem.~1.20]{Andruskiewitsch--Grana} again). This shows that the map is well-defined. To show that it is injective, we need to recover the isomorphism types of the cycle~$C$ and the quandle~$Q$ from the rack~$C\times Q$.

The canonical automorphism~$\sigma_{X\times Y}$ of a product of racks is given by the cartesian product~\hbox{$\sigma_X\times\sigma_Y$} of the canonical automorphisms:
\[
\sigma_{X\times Y}(x,y)=(x,y)\rhd(x,y)=(x\rhd x,y\rhd y)=(\sigma_X(x),\sigma_Y(y)).
\]
If~$X=C$ is a transitive cycle, then~$\sigma_C$ is just that transitive cycle. If~$Y=Q$ is a quandle, then~$\sigma_Q=\id_Q$ is the identity. Thus, we see that~$\sigma_{C\times Q}=\sigma_C\times\id_Q$ decomposes into as many cycles of type~$C$ as~$Q$ has elements. This determines the cycle type~$C$. For the isomorphism type of the quandle~$Q$, we note that the argument above shows that the projection~$C\times Q\to Q$ induces an isomorphism between~$Q$ and the set of~$\sigma_{C\times Q}$--orbits of~$C\times Q$. In other words, we can recover~$Q$ as the associated quandle of the rack~$C\times Q$, which is the value of the left-adjoint to the forgetful functor from the category of quandles to the category of racks.
\end{proof}


\begin{example}\label{ex:not_surjective}
The map in Theorem~\ref{thm:CxQ<R} is not surjective. To see this, we can take the group~\hbox{$G=\mathrm{SL}_2(\mathbb{F}_3)$} of order~$24$. The group~$G$ acts transitively on the set~$R$ of eight non-zero vectors in~$\mathbb{F}_3^2$. The stabiliser~$H$ of the first standard basis vector is the subgroup of order~$3$ that consists of the upper-triangular matrices with~$1$'s on the diagonal. Thus, we can identify~$R$ with~$G/H$; a coset~$bH$ corresponds to the first column of the matrix~$b$. As in~\eqref{eq:like-x-set}, a connected rack structure on this set is given by~$aH\rhd bH=ama^{-1}bH$ with the following matrix~$m$, which commutes with all~$h\in H$.
\[
m=\begin{bmatrix}
-1 & -1 \\ \phantom{-}0 & -1
\end{bmatrix}
\]
Then~$aH$ acts on the vector~$bH$ by left-multiplication with the matrix~$ama^{-1}$, which only depends on the coset~$aH$. Note that the actions of~$aH$ and~$-aH$ are equal, as~$ama^{-1}=(-a)m(-a)^{-1}$ so that the eight vectors only produce four different matrices. Since~$aH\rhd aH=amH$, we see that the canonical automorphism is given by right-multiplication with~$m$, which is a non-trivial involution, since~$m\not\in H$ but~$m^2\in H$. The quotient is the associated quandle, which is connected and has order~$4$, so it must be isomorphic to the tetrahedral quandle. However, the rack~$R$ is not isomorphic to the product of the tetrahedral quandle and the cyclic rack of order~$2$, as the inner automorphism group of~$R$ is isomorphic to~$\mathrm{SL}_2(\mathbb{F}_3)$. In contrast, the inner automorphism group of that product is the product~\hbox{$\mathrm{A}_4\times\mathrm{C}_2$}. We refer to~\cite[Ex.~8.6]{HSV} for more information related to~$R$.
\end{example}


\begin{corollary}\label{cor:tensor_embed}
The cartesian product induces an injective morphism
\[
\rmB(\bbZ)\otimes\rmB(\calQ)\longrightarrow\rmB(\calR)
\]
of rings.
\end{corollary}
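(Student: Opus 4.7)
The plan is to construct the ring map as the tensor product of the two unit maps provided by Propositions~\ref{prop:quandles_split} and~\ref{prop:permutations_split}, and then read off injectivity directly from the bases supplied by our structure theorems, using Theorem~\ref{thm:CxQ<R} as the combinatorial input.

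First, I note that the two inclusions $\rmB(\calQ)\to\rmB(\calR)$ and $\rmB(\bbZ)\to\rmB(\calR)$ produced in Propositions~\ref{prop:quandles_split} and~\ref{prop:permutations_split} both land in the commutative ring $\rmB(\calR)$, so the universal property of the tensor product of commutative rings yields a unique ring homomorphism
\[
\mu\colon\rmB(\bbZ)\otimes\rmB(\calQ)\longrightarrow\rmB(\calR)
\]
whose values on elementary tensors are products in $\rmB(\calR)$. On the representatives supplied by Proposition~\ref{prop:permutations_split} (a permutation rack $(X,\pi)$) and by Proposition~\ref{prop:quandles_split} (a quandle $Q$), the map sends $b(X,\pi)\otimes b(Q)$ to $b((X,\pi)\times Q)$ by Proposition~\ref{prop:AR_multiplication}.

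Second, I identify integral bases on both sides. On the target, Theorem~\ref{thm:basis} shows that $\rmB(\calR)$ is free abelian on $\Rcon$. On the source, $\rmB(\bbZ)$ is free abelian on the set $\Pcon$ of cycle classes (Proposition~\ref{prop:permutations}, together with the basis description recalled before Proposition~\ref{prop:permutations_split}), and Theorem~\ref{thm:Burnside_for_quandles_is_monoidal} gives $\rmB(\calQ)\cong\bbZ\{\Qcon\}$. Consequently the tensor product $\rmB(\bbZ)\otimes\rmB(\calQ)$ is free abelian with basis $\Pcon\times\Qcon$, and with respect to these bases $\mu$ is given on basis elements by
\[
([C],[Q])\longmapsto b(C\times Q).
\]

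Third, Theorem~\ref{thm:CxQ<R} tells us two things that together close the argument: each product $C\times Q$ of a connected cycle by a connected quandle is itself a connected rack, so $b(C\times Q)$ is in fact a basis element of $\rmB(\calR)$; and the assignment $([C],[Q])\mapsto [C\times Q]$ is an \emph{injective} map $\Pcon\times\Qcon\to\Rcon$. Thus $\mu$ sends the basis $\Pcon\times\Qcon$ injectively into the basis $\Rcon$ of $\rmB(\calR)$, and any $\bbZ$--linear map of free abelian groups that is injective on a basis is itself injective.

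The statement is essentially a formal consequence of results already in place, so there is no real obstacle; the only thing to be careful about is tracking that the ring structure on the tensor product matches the cartesian product in $\rmB(\calR)$ (which is immediate from the definition of $\mu$ and Proposition~\ref{prop:AR_multiplication}), and that the injectivity furnished by Theorem~\ref{thm:CxQ<R} is precisely injectivity on the basis $\Pcon\times\Qcon$ of the source, rather than merely on generating sets.
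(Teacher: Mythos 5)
Your proposal is correct and follows essentially the same route as the paper: identify the integral bases $\Pcon$, $\Qcon$, and $\Rcon$ of the three rings, observe that the tensor product is free abelian on $\Pcon\times\Qcon$, and invoke the injection $\Pcon\times\Qcon\to\Rcon$ of Theorem~\ref{thm:CxQ<R} to conclude that the map sends a basis injectively into a basis. The only difference is that you spell out the construction of the ring homomorphism via the universal property of the tensor product, which the paper leaves implicit.
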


\begin{proof}
This follows from Theorem~\ref{thm:CxQ<R} as~$\Pcon$ is an integral basis for~$\rmB(\bbZ)$, whereas~$\Qcon$ and~$\Rcon$ are bases for~$\rmB(\calQ)$ and~$\rmB(\calR)$.
\end{proof}

\begin{remark}
By Theorem~\ref{thm:basis}, the Burnside ring~$\rmB(\calR)$ of racks is free as an abelian group. Its subring~$\rmB(\bbZ)\otimes\rmB(\calQ)$ is evidently free as a module over~$\rmB(\bbZ)$, over~$\rmB(\calQ)$, and even over~$\rmB(\bbZ)\otimes\rmB(\calQ)$. It would be interesting to describe the Burnside ring~$\rmB(\calR)$ of racks as a module over these larger subrings.
\end{remark}


\section{Crossed Burnside rings}\label{sec:crossed}

In this section, we will first review the theory of crossed~$G$--sets and their Burnside rings, and then we will relate this theory to racks and their Burnside rings.


Let~$G$ be a group. We will denote the category of~$G$--sets by~$\calS^G$. For every~$G$--set~$Y$, we have the category~$\calS^G\downarrow Y$ of~$G$--sets over~$Y$. If~$Y=\star$ is the~$G$--set supported on a singleton, the category~$\calS^G\downarrow \star$ is equivalent to~$\calS^G$. The following case is most important for us.

\begin{definition}
If~$Y=\Ad(G)$ is the~$G$--set supported on the set~$G$ where the group~$G$ acts via conjugation, the category~\hbox{$\calX_G=\calS^G\downarrow\Ad(G)$} is the category of~\textit{crossed~$G$--sets}. In other words, a crossed~$G$--set is a~$G$--set~$X$ together with a~$G$--map~\hbox{$\delta\colon X\to\Ad(G)$}, and so equivariance of~$\delta$ means~\hbox{$\delta(gx)=g\delta(x)g^{-1}$}. 
\end{definition}

\begin{remark}
One motivation for introducing crossed~$G$--sets is the fact that the category of crossed~$G$--sets is the Drinfeld centre of the monoidal category of~$G$--sets with respect to the cartesian product: we have~\hbox{$\calZ(\calS^G,\times,\star)\simeq \calX_G$}~\cite{Freyd--Yetter,Krahmer--Wagemann}. In particular, the category of crossed~$G$--sets is braided monoidal. We refer to the sequel~\cite{Mazza--Szymik:Mackey} for more on centres in this context.
\end{remark}


\begin{remark}
The category of~$G$--sets is, of course, a functor category, i.e., it is equivalent to a category of functors from some category to the category~$\calS$ of sets: it is the category of functors from~$G$, thought of as a category with one object, to the category of sets. The category of~$G$--sets over~$Y$ is also equivalent to a functor category. This time, the objects of the indexing category are the elements of the set~$Y$, and the morphisms~$a\to b$ are the group elements~$g$ with~$ga=b$. In particular, we can use this observation to rewrite the category of crossed~$G$--sets as a functor category: take~$Y=\Ad(G)$. Now the objects are the elements of~$G$ and the morphisms~$a\to b$ are the elements~$g$ in~$G$ such that~$gag^{-1}=b$.~(This can also be thought of as the automorphism category of the category associated with~$G$.) In particular, the category of crossed~$G$--sets---for a fixed group~$G$---is a topos.
\end{remark}

\begin{remark}
The disjoint union defines the categorical sum of~$G$--sets over~$Y$. Products of~$G$--sets over~$Y$ are more subtle, as they are given by pullbacks, which do not respect the underlying sets unless~$Y=\star$ is terminal. However, when~$Y=\Ad(G)$, the group multiplication~\hbox{$(x,y)\mapsto x\cdot y$} within~$G$ induces a~$G$--map~\hbox{$\Ad(G)\times\Ad(G)\to\Ad(G)$}, because
\[
(gxg^{-1})\cdot(gyg^{-1})=g(x\cdot y)g^{-1}.
\]
We can use that to define a monoidal category structure that respects the underlying sets.
\end{remark}


For a given~$G$--set~$Y$, we can form the Grothendieck group~$\rmB(G;Y)$ of the category of finite~$G$--sets over~$Y$. This gives the universal functorial additive invariant of finite~$G$--sets in the sense of Remark~\ref{rem:functorial}. For~$Y=\star$, we get the universal additive invariant~$\rmB(G;\star)=\rmB(G)$, the ordinary Burnside ring of~$G$. More generally, for a transitive~$G$--set~\hbox{$Y=G/H$}, we have an isomorphism
\[
\rmB(G;G/H)\cong\rmB(H),
\]
as every~$G$--set over~$G/H$ comes from an~$H$--set by induction. This isomorphism, together with the isomorphism~$\rmB(G;Y\sqcup Z)\cong\rmB(G;Y)\oplus\rmB(G;Z)$ allows us to compute all~$\rmB(G;Y)$ in terms of Burnside rings.

\begin{definition}
The~\textit{crossed Burnside ring} of~$G$ is
\[
\rmB^\times(G)=\rmB(G;\Ad(G)).
\]
\end{definition}

Suitable references for crossed Burnside rings are~\cite{Oda--Yoshida, Bouc:2003b, Mazza}. From the above generalities, we get
\[
\rmB^\times(G)\cong\bigoplus_{[g]}\rmB(\rmC_G(g)),
\]
where~$[g]$ ranges over the conjugacy classes of elements~$g$ in~$G$ and~$\rmC_G(g)$ is the centraliser of~$g$ in~$G$. In particular, the neutral element~$e$ gives rise to a copy of the Burnside ring~$\rmB(G)$ inside~$\rmB^\times(G)$, whose classes are represented by crossed~$G$--sets of the form~\hbox{$X\to\{e\}=\star\to\Ad(G)$}. An integral basis for the crossed Burnside ring~$\rmB^\times(G)$ of~$G$ is given by the~set of isomorphism classes~$[G/H,a]$ of transitive crossed~$G$--sets~$(G/H,a)$, where~$H$ is a subgroup~$H$ of~$G$ and~$a$ is an element of~$G$ that commutes with every element in~$H$. As a crossed~$G$--set, this is the homogeneous~$G$--set~$G/H$ with the~`crossing' given by
\[
G/H\longrightarrow\Ad(G),\,gH\mapsto gag^{-1}.
\] 
Two transitive crossed~$G$--sets~$(G/H,a)$ and~$(G/K,b)$ are isomorphic if and only if there exists an element~\hbox{$g\in G$} such that~$K=gHg^{-1}$ and~$b=gag^{-1}$.


\subsection{Functoriality}

Any morphism~$f\colon G\to H$ between groups gives rise to a restriction functor~\hbox{$f^*\colon\calS^H\to\calS^G$} with~$f^*X=X$. This functor can be thought of as a right Kan extension. By left Kan extension, a morphism~$f\colon G\to H$ of groups gives rise to an induced functor~\hbox{$f_*\colon\calS^G\to\calS^H$} with~$f_*X=H\times_GX$. Here, the set~\hbox{$H\times_GX$} is the set of equivalences classes for the equivalence relation on~$H\times X$ given by~\hbox{$(hf(g),x)\sim(h,gx)$}. This set is an~$H$--set for the left multiplication on the first coordinate. We write~$[h,x]$ for the equivalence class of~$(h,x)$.

For the~$G$--set~$X=\Ad(G)$, with the action given by conjugation, we have a canonical~$H$--map~\hbox{$f_*\Ad(G)\to\Ad(H)$}, defined by~\hbox{$[h,x]\mapsto hf(x)h^{-1}$}. Then 
\[
f_*([hf(g),x])=f_*([h,gx])=hf(gxg^{-1})h^{-1},
\] 
and this shows that the map is well-defined on~$f_*\Ad(G) = H \times_G \Ad(G)$. This leads to the following result.

\begin{lemma}\label{lem:fibration}
A morphism~$f\colon G\to H$ of groups induces a functor 
\[
f_*\colon\calX_G\longrightarrow\calX_H.
\]
It sends a crossed~$G$--set~$\delta\colon X\to\Ad(G)$ to the crossed~$H$--set
\[
f_*X\longrightarrow f_*\Ad(G)\longrightarrow\Ad(H)
\]
where the first map~$f_*\delta$ comes from the functoriality of induction and the second map is the canonical map~\hbox{$f_*\Ad(G)\to\Ad(H)$}.
\end{lemma}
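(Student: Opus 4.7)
The plan is to verify that the construction given in the statement really does land in crossed $H$-sets and is functorial. Most of the work is packaged in the preamble to the lemma, so the proof reduces to checking a handful of naturality statements; I will organize it in three stages.

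First, I would confirm that the claimed map $\epsilon_G\colon f_*\Ad(G)\to\Ad(H)$, $[h,x]\mapsto hf(x)h^{-1}$, is a well-defined $H$-equivariant map. Well-definedness is precisely the computation already sketched in the excerpt: since $f$ is a group homomorphism, $hf(g)\cdot f(x)\cdot (hf(g))^{-1}=h\,f(gxg^{-1})\,h^{-1}$, so the value depends only on the equivalence class in $H\times_G\Ad(G)$. Equivariance is immediate from $h'\cdot[h,x]=[h'h,x]\mapsto h'(hf(x)h^{-1})(h')^{-1}$. Consequently, for any crossed $G$-set $\delta\colon X\to\Ad(G)$, the composite $\epsilon_G\circ f_*\delta\colon f_*X\to\Ad(H)$ is a composition of $H$-maps and so is a crossed $H$-set. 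This defines $f_*$ on objects.

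Second, I would define $f_*$ on morphisms and check naturality. Given a morphism $\phi\colon(X,\delta_X)\to(Y,\delta_Y)$ in $\calX_G$ (so $\phi$ is a $G$-map with $\delta_Y\circ\phi=\delta_X$), set $f_*\phi\colon f_*X\to f_*Y$, $[h,x]\mapsto[h,\phi(x)]$; this is well-defined and $H$-equivariant by the standard functoriality of induction. The relation $\delta_Y\circ\phi=\delta_X$ implies $(f_*\delta_Y)\circ(f_*\phi)=f_*\delta_X$, and post-composing with $\epsilon_G$ shows that $f_*\phi$ is a morphism in $\calX_H$ over $\Ad(H)$.

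Third, functoriality, i.e., $f_*(\id_X)=\id_{f_*X}$ and $f_*(\psi\circ\phi)=f_*\psi\circ f_*\phi$, is immediate from the formula $[h,x]\mapsto[h,\phi(x)]$, as induction $G\text{-Set}\to H\text{-Set}$ is itself a functor.

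There is no substantive obstacle here; the only thing that might look subtle is the well-definedness of $\epsilon_G$ on equivalence classes, but the homomorphism property of $f$ takes care of it. It is worth noting in passing that the construction is also compatible with the natural $H$-map $\Ad(G)\times\Ad(G)\to\Ad(H)$ induced on the monoidal structure, so that $f_*$ will be monoidal in the sense needed later; however, this goes beyond what the lemma asserts, so I would not include it in the proof itself.
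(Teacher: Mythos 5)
Your proposal is correct and follows the same route as the paper, which proves the lemma implicitly in the paragraph preceding it by constructing the canonical map $[h,x]\mapsto hf(x)h^{-1}$ and verifying its well-definedness on $H\times_G\Ad(G)$ via the homomorphism property of $f$. Your additional checks of $H$-equivariance, the action on morphisms, and functoriality are the routine details the paper leaves to the reader.
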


\begin{remark}
The functoriality of the crossed Burnside ring~$\rmB^\times(G)$ is lamen\-table. We recall that the ordinary Burnside ring~$G\mapsto\rmB(G)$ gives a Mackey functor, even a Tambara functor, and in particular, the restriction along any group homomorphism~\hbox{$f\colon G \to H$} induces a homomorphism~$f^\star\colon\rmB(H)\to\rmB(G)$ of rings. In contrast, the only known ring non-trivial homomorphisms between crossed Burnside rings are the Brauer morphisms of Bouc~\cite[2.3]{Bouc:2003b}, which for a subgroup~\hbox{$H\leqslant G$} give a homomorphism~\hbox{$\rmB^\times(G)\to\rmB^\times(\rmC_G(H))$} of rings via sending a crossing~$\delta\colon X\to\Ad(G)$ to~\hbox{$\delta^H\colon X^H\to\Ad(G)^H=\Ad(\rmC_G(H))$}, where~$\rmC_G(H)$ is the centraliser of~$H$ in~$G$. 
The subgroups~$H$ and~$\rmC_G(H)$ are, in general, incomparable, as witnessed, for example, by any one of the two non-abelian subgroups of index~$2$ in the dihedral group of order~$12$: they are both dihedral of order~$6$, and their centraliser is cyclic of order~$2$, but it is not contained in them.
\end{remark}


\subsection{The relation to the Burnside ring of finite racks}\label{sec:X_to_R}

We can now describe the Burnside ring~$\rmB(\calR)$ of finite racks in terms of crossed Burnside rings of finite groups. 

By Proposition~\ref{prop:universal_additive_invariant}, defining morphisms~\textit{out of} the abelian group~$\rmB(\calR)
$ is equivalent to finding additive invariants of finite racks. Here, on the contrary, is a way to construct morphisms~\textit{into} it.

\begin{proposition}\label{prop:crossed_to_Burnside_of_racks}
For each finite group~$G$, there is a morphism
\begin{equation}\label{eq:crossed_G_to_racks}
\rmB^\times(G)\longrightarrow\rmB(\calR)
\end{equation}
of rings.
\end{proposition}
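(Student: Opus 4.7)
My plan is to define the ring homomorphism on representatives: to a finite crossed $G$-set $(X,\delta)$ I associate the class $b(R_X)\in\rmB(\calR)$, where $R_X$ is the rack on the underlying set $X$ equipped with the operation $x \rhd y = \delta(x)\cdot y$, with $\cdot$ the $G$-action on $X$.

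First I would verify that $R_X$ is indeed a rack. Each left multiplication $\ell_x = \delta(x)\cdot(-)$ is a bijection because $\delta(x)\in G$ acts as a permutation of the $G$-set $X$. The self-distributivity axiom expands to
\[
\delta(x)\delta(y)\cdot z \;=\; \delta(\delta(x)\cdot y)\,\delta(x)\cdot z,
\]
which, after cancelling $\delta(x)$ on the right, reduces to the equivariance condition $\delta(g\cdot y) = g\delta(y)g^{-1}$ applied with $g=\delta(x)$.

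Second, I would check additivity. A decomposition $(X,\delta) = (X_1,\delta_1)\sqcup(X_2,\delta_2)$ of crossed $G$-sets writes $X$ as a disjoint union of two $G$-stable subsets. Because each $\ell_x$ acts through an element of $G$, both subsets $X_i$ are preserved, making them complementary subracks of $R_X$; this is precisely a decomposition in the sense of Definition~\ref{def:decomposition}, so $b(R_X) = b(R_{X_1}) + b(R_{X_2})$ in $\rmB(\calR)$. The universal property presenting $\rmB^\times(G)$ as the Grothendieck group of finite crossed $G$-sets under disjoint union then yields a well-defined homomorphism of abelian groups $\rmB^\times(G)\to\rmB(\calR)$.

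Third and most delicate is compatibility with the multiplication. The product on $\rmB^\times(G)$ arises from the monoidal structure on $\calX_G$ induced by $\Ad(G)$ as a $G$-monoid; on representatives it sends $(X,\delta_X)$, $(Y,\delta_Y)$ to $(X\times Y,(x,y)\mapsto \delta_X(x)\delta_Y(y))$ with diagonal $G$-action. One must therefore establish the equality
\[
b\bigl(R_{X\times Y,\,\delta_X\cdot\delta_Y}\bigr) \;=\; b(R_X)\, b(R_Y) \;=\; b(R_X\times R_Y)
\]
in $\rmB(\calR)$. I expect this to be the main obstacle, since the two racks on $X\times Y$ have different left multiplications in general, so a pointwise comparison is not available. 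My strategy is to invoke Theorem~\ref{thm:marks} and reduce to a mark-by-mark check: it suffices to show that for every finitely generated connected rack $C$ the counts $|\Mor_\calR(C,R_{X\times Y,\,\delta_X\cdot\delta_Y})|$ and $|\Mor_\calR(C,R_X)|\cdot|\Mor_\calR(C,R_Y)|$ coincide. Lemma~\ref{lem:connected_images} confines each morphism out of $C$ to a single connected component of the target, and the orbit--stabiliser description~\eqref{eq:like-x-set} of transitive crossed $G$-sets, together with the equivariance of the crossings, is what one uses to reorganise the orbit-wise counts on the left into the product form on the right.
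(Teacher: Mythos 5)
Your construction is exactly the paper's: the rack $x\rhd y=\delta(x)\cdot y$ on the underlying $G$--set, with additivity coming from the fact that the components of a disjoint union of crossed $G$--sets are $G$--stable and hence ideals of the associated rack. Your verifications of self-distributivity (via equivariance of $\delta$ with $g=\delta(x)$) and of additivity are correct and flesh out what the paper only asserts. The paper's own proof of multiplicativity is the single clause that the construction is ``compatible with \dots products'', so you are right to single this step out as the delicate one.

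However, the strategy you propose for that step cannot succeed, because the equality of marks you intend to verify is false. Take $G=\bbZ/2=\{e,s\}$ and let $(X,\delta)$ be the free orbit $X=G$ with crossing $\delta\equiv s$; the associated rack is the $2$--cycle $c_2$. The product in $\calX_G$ is $X\times X$ with the \emph{diagonal} action and the multiplied crossing $\delta(x)\delta(y)=s^2=e$, so its associated rack is the \emph{trivial} rack on four elements, with class $4\,b(\star)$ in $\rmB(\calR)$. On the other hand, $b(c_2)^2=b(c_2\times c_2)=2\,b(c_2)$, since $c_2\times c_2$ is the permutation rack of $(12)(34)$ and decomposes into two $2$--cycles. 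By Theorem~\ref{thm:basis} these classes are distinct; concretely, your mark-by-mark check already fails at $C=\star$, where $\Phi_\star$ takes the values $4$ and $0$ respectively. So the assignment $(X,\delta)\mapsto b(R_{X,\delta})$ is additive but not compatible with the product of crossed $G$--sets as defined via $\Ad(G)\times\Ad(G)\to\Ad(G)$: the discrepancy between the diagonal action with multiplied crossing and the componentwise structure of the product rack is real, not merely a bookkeeping obstacle. This is not a defect you introduced --- the same example shows that the paper's one-line justification does not go through either; contrast the global product~\eqref{eq:product_of_crossed} over $G\times H$, where the componentwise action makes the two structures agree on the nose, which is why Theorem~\ref{thm:iso_BX_BR} is unaffected. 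As written, your argument establishes a morphism of abelian groups only.
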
 

\begin{proof}
Any crossed~$G$--set~$\delta\colon X\to G$ defines a rack structure on~$X$ via the formula~\hbox{$a\rhd b=\delta(a)b$}. This construction is compatible with isomorphisms, sums, and products.
\end{proof}

\begin{remark}
For any nontrivial group~$G$, the morphism~\eqref{eq:crossed_G_to_racks} of rings sends the ordinary Burnside ring~\hbox{$\rmB(G)\subseteq\rmB^\times(G)$} to the summand~$\bbZ\subseteq\rmB(\calR)$ corresponding to the trivial racks, where~$x\rhd y=y$. Therefore, the augmentation ideal of~$\rmB(G)$, i.e., the kernel of the restriction morphism~$\rmB(G)\to\rmB(e)=\bbZ$, is always in the kernel of~\eqref{eq:crossed_G_to_racks}. It follows that the morphism~\eqref{eq:crossed_G_to_racks} is only ever injective if~$G$ is the trivial group.
\end{remark}  

\begin{example}
Recall that the crossed Burnsidering~$\rmB^\times(G)$ has an integeral basis given by elements~$[H,a]$ with representatives of the form~$G/H\to\Ad(G)$ with~$xH\mapsto xax^{-1}$ for some element~\hbox{$a\in\rmC_G(H)$}. The corresponding rack structure on the set~$G/H$ of cosets is given by the formula~\hbox{$xH\rhd yH=xax^{-1}yH$}. In particular, if~$G$ is an abelian group, where conjugation is trivial, the image has~\hbox{$xH\rhd yH=ayH$}, and the result is a permutation rack. In other words, abelian groups see nothing of~$\rmB(\calR)$ that is not already contained in the image of the morphism~$\rmB(\bbZ)\to\rmB(\calR)$ from Proposition~\ref{prop:permutations_split}.
\end{example}


It is evident from Proposition~\ref{prop:permutations_split} that none of the morphisms~\eqref{eq:crossed_G_to_racks} can be surjective; the source is a finitely generated abelian group. 

\begin{proposition}\label{prop:jointly_surjective}
The sum of the morphisms~\eqref{eq:crossed_G_to_racks}
\[
\bigoplus_G\rmB^\times(G)\longrightarrow\rmB(\calR)
\] 
is surjective. 
\end{proposition}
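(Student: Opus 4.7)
The plan is to use the additive generation of~$\rmB(\calR)$ by classes of finite racks from Definition~\ref{def:B(R)} and to exhibit, for each finite rack, a natural crossed~$G$-set structure that recovers it under the morphism~\eqref{eq:crossed_G_to_racks}. Since the target is generated as an abelian group by the classes~$b(R)$ of finite racks~$R$, it suffices to show that every such~$b(R)$ lies in the image of~$\rmB^\times(G)\to\rmB(\calR)$ for a suitable finite group~$G$.

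Given a finite rack~$(R,\rhd)$, the natural choice is~$G=\Inn(R,\rhd)$, which is finite because it is a subgroup of the finite group~$\Sym(R)$. I would take the crossing to be the map
\[
\delta\colon R\longrightarrow\Ad(G),\qquad a\longmapsto\ell_a,
\]
and verify that this is a morphism of~$G$-sets when~$R$ carries its canonical~$\Inn(R,\rhd)$-action and~$G$ acts on itself by conjugation. The required equivariance~$\delta(\alpha\cdot a)=\alpha\delta(a)\alpha^{-1}$ is exactly the identity~$\ell_{\alpha(a)}=\alpha\ell_a\alpha^{-1}$ from~\eqref{eq:aut}. Hence~$(R,\delta)$ defines a class in~$\rmB^\times(G)$.

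To conclude, I would check that the rack structure associated to~$(R,\delta)$ by the construction in the proof of Proposition~\ref{prop:crossed_to_Burnside_of_racks} recovers the original rack. This is immediate: for~$a,b\in R$, the induced operation is
\[
a\rhd' b=\delta(a)\cdot b=\ell_a(b)=a\rhd b.
\]
Therefore, the morphism~\eqref{eq:crossed_G_to_racks} for~$G=\Inn(R,\rhd)$ sends the class of~$(R,\delta)$ to~$b(R)$, and the sum over all finite groups~$G$ is surjective.

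There is no substantial obstacle here; the content is the observation that the self-distributivity axiom, rephrased as~\eqref{eq:aut}, is precisely the equivariance needed to realise any rack as a crossed set over its own operator group. What one \emph{cannot} expect to improve is the size of the group: fixing a single~$G$ in advance would not suffice, since different racks require different inner automorphism groups, and even summing over subgroups of a fixed~$G$ would miss racks whose operator group is larger.
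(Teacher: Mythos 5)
Your proof is correct and follows essentially the same route as the paper: realise each finite rack $R$ as a crossed $G$--set via $a\mapsto\ell_a$, using the equivariance identity~\eqref{eq:aut}. The only (immaterial) difference is that the paper takes $G=\Aut(R,\rhd)$ where you take $G=\Inn(R,\rhd)$; both are finite and both make $\delta$ equivariant, since $\ell_a$ lies in $\Inn(R,\rhd)\leqslant\Aut(R,\rhd)$ and \eqref{eq:aut} holds for all automorphisms.
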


\begin{proof}
Every finite rack~$R$ is in the image of the morphism~\eqref{eq:crossed_G_to_racks} for~$G=\Aut(R)$, the finite automorphism group of the rack~$R$. The map~\hbox{$\delta\colon R\to\Aut(R)$} that sends~$a$ to the left-multiplication~$\ell_a$ defines a crossed~$\Aut(R)$--set structure on~$R$, and this crossed~$\Aut(R)$--set maps to~$R$ under~\eqref{eq:crossed_G_to_racks}. 
\end{proof}


\subsection{Globalisation}\label{sec:globalisation}

In this section, we discuss a `global' variant of the category of crossed~$G$--sets, the category~$\calX$ of global actions. It can be used to present the theory of racks as a localisation~(see~\cite{Lawson--Szymik:2}), and we will use it to give an alternative description of the Burnside ring of racks in terms of a global version of the crossed Burnside ring~(see~Theorem~\ref{thm:iso_BX_BR} below).

The Grothendieck construction takes the~(pseudo) functor~$G\mapsto\calX_G$~(see Lemma~\ref{lem:fibration}) from the category of groups to the category of categories to another category, say~$\calX$, that now fibres over the category of groups, with fibre over~$G$ equivalent to~$\calX_G$, the category of crossed~$G$--sets. In concrete terms, the objects of~$\calX$ can be identified with crossed~$G$--sets~$\delta\colon X\to\Ad(G)$ for some group~$G$ that is part of the data of the object. A morphism from a crossed action~$\delta\colon X\to\Ad(G)$ to another crossed action~$\epsilon\colon Y\to\Ad(H)$ is a pair~$(f,w)$ consisting of a morphism~\hbox{$f\colon G\to H$} of groups and a map~\hbox{$w\colon X\to f^*Y$} of~$G$--sets, where~$G$ acts on~$f^*Y$, which is~$Y$ as a set, via~$f$, such that the diagram
\[
\xymatrix{
X\ar[d]_w\ar[r]^-\delta &\Ad(G)\ar[d]^-{\Ad(f)}\\
f^*Y \ar[r]_-{f^*\epsilon} &f^*\Ad(H)
}
\]
of~$G$--sets commutes. Of course, this just means~$f\delta(x)=\epsilon w(x)$ for all~$x$ in~$X$.

\begin{definition}
We call~$\calX$ the category of~\textit{crossed actions}.
\end{definition}

\begin{remark}
The reader might be tempted to use the term~\textit{crossed sets}, without a~$G$. However, one of our primary references already defined crossed sets to be a special kind of quandles~(see~\cite[Def.~1.1]{Andruskiewitsch--Grana}), and it seems prudent to avoid redefining the term.
\end{remark}

A morphism~$(f,w)$ in the category~$\calX$ is an isomorphism if and only if~$f$ and~$w$ are bijective. The following weaker notion is important:

\begin{definition}\label{def:equivalence}
A morphism~$(f,w)$ of crossed actions is an~\textit{equivalence} if and only if the map~$w$ is a bijection.
\end{definition}

\begin{remark}
The equivalences are precisely the cartesian morphisms in the category~$\calX$ when considered fibred over the category of groups as above. 
\end{remark}


The category~$\calX$ of crossed actions has a terminal object~$\star\to\Ad(e)$, where~$\star$ is a singleton, and an initial object~$\emptyset\to\Ad(e)$. In both cases, the crossings are uniquely determined by the fact that~$\Ad(e)$ has a unique element. 

Let~$\delta\colon X\to\Ad(G)$ and~$\epsilon\colon Y\to\Ad(H)$ be two crossed actions with groups~$G$ and~$H$, respectively. Their product is defined as 
\begin{equation}\label{eq:product_of_crossed}
X\times Y\longrightarrow\Ad(G\times H),\,(x,y)\mapsto(\delta(x),\epsilon(y)).
\end{equation}
In contrast, the sum is more delicate and requires a more extended discussion.

Let~$\delta\colon X\to\Ad(G)$ and~$\epsilon\colon Y\to\Ad(H)$ be two crossed actions with groups~$G$ and~$H$, respectively. Their sum is supported on the disjoint union~$X\sqcup Y$. The action of the sum~$G*H$~(the~`free product') of the groups on~$X$ is via the given~$G$--action and the trivial action of~$H$. For~$Y$, it is the other way around: the group~$H$ acts as given, and~$G$ acts trivially. In other words, an element~\hbox{$g_1h_1\cdots g_nh_n$} acts on~$X$ as~\hbox{$g_1\cdots g_n$} and on~$Y$ as~\hbox{$h_1\cdots h_n$}. We define
\[
\delta*\epsilon\colon X\sqcup Y\to\Ad(G*H)
\] 
to be~$\delta$ on~$X$ and~$\epsilon$ on~$H$, composed with the canonical embeddings of~$G$ and~$H$ into the sum~$G*H$. Equivariance follows then from the equivariance of~$\delta$ and of~$\epsilon$. 

\begin{remark}
Joyce~\cite[Sec.~9]{Joyce} describes general colimits in the category of crossed actions. As the fibration from the category of crossed actions to the category of groups has a right adjoint, colimits can be formed by first forming the colimit~$G$ of the acting groups and then changing bases to define a new diagram that is defined over this colimit group~$G$. Then, we can form the usual colimit within the category of crossed~$G$--sets, which is just the colimit in the category of sets with the usual~$G$--action.
\end{remark}

Even if~$G$ and~$H$ are finite groups, the group~$G*H$ is usually infinite. Therefore, it will be beneficial for some purposes to have a description of a crossed action that is equivalent to the sum, in the sense given by Definition~\ref{def:equivalence}, and where the acting group is finite. This can be done as follows. The underlying set is, of course, the union~$X\sqcup Y$. On it, the product~\hbox{$G\times H$} acts on~$X$ and~$Y$ via the projections, so the disjoint union~$X\sqcup Y$ becomes a~$(G\times H)$--set, and 
\begin{align*}
X\sqcup Y&\longrightarrow\Ad(G\times H),\\
x&\longmapsto(\delta(x),e),\\
y&\longmapsto(e,\epsilon(y))
\end{align*}
defines a crossing. 

\begin{proposition}\label{prop:sum_of_crossed}
This crossed action is equivalent, in the sense given by Definition~\ref{def:equivalence}, to the sum constructed before. 
\end{proposition}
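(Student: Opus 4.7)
The plan is to construct an explicit morphism from the $(G*H)$-version to the $(G\times H)$-version in the category~$\calX$ whose underlying set map is the identity~$\id_{X\sqcup Y}$, since by Definition~\ref{def:equivalence} this will immediately make it an equivalence. By the universal property of the free product, there is a unique group homomorphism~\hbox{$f\colon G*H\to G\times H$} that restricts to the canonical inclusions~\hbox{$g\mapsto(g,e)$} and~\hbox{$h\mapsto(e,h)$} on the free factors. The candidate morphism is the pair~\hbox{$(f,\id_{X\sqcup Y})$}.

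First I would verify that the identity map~$w=\id_{X\sqcup Y}$ is equivariant for the two actions of~$G*H$, where on the target the action is restricted along~$f$ from the~$(G\times H)$-action. It suffices to check on generators. For~$g\in G$ acting on an element~\hbox{$x\in X$}, both actions give~$gx$ (in the target, because~$f(g)=(g,e)$ acts on~$X$ via the projection to~$G$); for~$g$ acting on~\hbox{$y\in Y$}, the source action is trivial, and the target action is via~$f(g)=(g,e)$, whose~$H$-component is trivial, so also trivial; the cases for~$h\in H$ are symmetric.

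Next I would check commutativity of the crossed-action square, that is,
\[
\Ad(f)\circ(\delta*\epsilon)=(\delta,\epsilon)\circ\id_{X\sqcup Y},
\]
where~\hbox{$(\delta,\epsilon)\colon X\sqcup Y\to\Ad(G\times H)$} denotes the crossing with~\hbox{$x\mapsto(\delta(x),e)$} and~\hbox{$y\mapsto(e,\epsilon(y))$}. On~$x\in X$ we have~\hbox{$(\delta*\epsilon)(x)=\delta(x)\in G*H$}, whose image under~$f$ is~$(\delta(x),e)$, as required; the case of~$y\in Y$ is analogous. This verifies the pair~$(f,\id_{X\sqcup Y})$ is a morphism in~$\calX$, and since its underlying set map is a bijection, it is an equivalence by Definition~\ref{def:equivalence}.

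There is no serious obstacle: the statement is essentially the assertion that the free product and the direct product become indistinguishable once one records only the underlying~$G$- and~$H$-actions on~$X$ and~$Y$ separately together with the crossings valued in the two conjugation sets, and the canonical projection~$G*H\to G\times H$ mediates this formally. The only thing that requires any care is keeping track of which component acts trivially on which summand, which is precisely what makes the diagram and the equivariance check work out on the nose.
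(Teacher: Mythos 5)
Your proposal is correct and matches the paper's own argument: both exhibit the equivalence as the pair consisting of the identity on~$X\sqcup Y$ and the canonical morphism~$G*H\to G\times H$ obtained from the universal property of the free product. You simply spell out the equivariance and square-commutativity checks that the paper leaves implicit.
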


\begin{proof}
An equivalence betweem these crossed actions is given by the identity on~$X\sqcup Y$ and the canonical morphism~\hbox{$f\colon G*H\to G\times H$} of groups, given by the formula
\[
f(g_1h_1\cdots g_nh_n)=(g_1\cdots g_n,h_1\cdots h_n),
\] 
or more conceptually, using the universal property of the sum of groups by pairing the canonical embeddings of~$G$ and~$H$ into the product~$G\times H$.
\end{proof}


\subsection{The Burnside ring of crossed actions}

We are now in a situation where we can circumvent the lack of good functorial properties of the crossed Burnside ring construction~$G\mapsto\rmB^\times(G)$ by defining a global version, the Burnside ring~$\rmB(\calX)$ of crossed actions.

The abelian group~$\rmB(\calX)$ is generated by elements~$b(\delta\colon X\to G)$, one for each finite crossed action~\hbox{$\delta\colon X\to G$}, subject to the relations
\[
b(\delta\colon X\to G)=b(\epsilon\colon Y\to H)
\]
whenever~$\delta\colon X\to G$ and~$\epsilon\colon Y\to H$ are equivalent~(in the sense we defined in Definition~\ref{def:equivalence}), and
\[
b((\delta,\epsilon)\colon X\sqcup Y\to G\times H)=b(\delta\colon X\to G)+b(\epsilon\colon Y\to H),
\]
where~$(\delta,\epsilon)\colon X\sqcup Y\to G\times H$ is equivalent to the 
sum of~$\delta$ and~$\epsilon$, as in Proposition~\ref{prop:sum_of_crossed}. The ring structure on~$\rmB(\calX)$ is given by the product~\eqref{eq:product_of_crossed} of crossed actions.

\begin{proposition}\label{prop:crossed_like_racks}
For every finite group~$G$, there is a morphism
\begin{equation}\label{eq:local_to_global}
\rmB^\times(G)\longrightarrow\rmB(\calX)
\end{equation}
of rings. These are jointly surjective, i.e., their direct sum is surjective. \end{proposition}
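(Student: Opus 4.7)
The plan is to define, for each finite group~$G$, a ring homomorphism~\hbox{$\Phi_G\colon\rmB^\times(G)\to\rmB(\calX)$} by sending the class of a finite crossed~$G$--set~\hbox{$(X,\delta\colon X\to\Ad(G))$} to the class~\hbox{$b(X,\delta,G)\in\rmB(\calX)$} of the same data viewed as a crossed action whose acting group is~$G$. First, I would check that the defining relations of~$\rmB^\times(G)$ are respected. Isomorphism-invariance is immediate, since every isomorphism of crossed~$G$--sets is an equivalence in~$\calX$ via~\hbox{$f=\id_G$}. The disjoint-union relation~\hbox{$b^\times(X\sqcup Y)=b^\times(X)+b^\times(Y)$} in~$\rmB^\times(G)$ translates to the identity~\hbox{$b(X\sqcup Y,\delta\sqcup\epsilon,G)=b(X,\delta,G)+b(Y,\epsilon,G)$} in~$\rmB(\calX)$; by the sum relation in~$\rmB(\calX)$, the right-hand side equals~\hbox{$b(X\sqcup Y,(\delta,\epsilon),G\times G)$} with factor-wise action and block-diagonal crossing, and the task is to produce an equivalence between this description and the diagonal-action one. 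Proposition~\ref{prop:sum_of_crossed} allows one to move between the~\hbox{$(G\times G)$}--version and the free-product~\hbox{$(G*G)$}--version of the sum, and the remaining bridge to the diagonal-action crossed~$G$--set is engineered from the fold homomorphism~\hbox{$G*G\to G$} together with a matching bijection of the underlying set.

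Multiplicativity follows a parallel pattern. The product~\hbox{$(X\times Y,\delta\epsilon,G)$} of crossed~$G$--sets with diagonal action and crossing~\hbox{$(x,y)\mapsto\delta(x)\epsilon(y)$} must be identified in~$\rmB(\calX)$ with~\hbox{$(X\times Y,(\delta,\epsilon),G\times G)$} from~\eqref{eq:product_of_crossed}. I would combine the diagonal embedding~\hbox{$\Delta\colon G\to G\times G$} with auxiliary equivalences that absorb the discrepancy between the crossings~\hbox{$\delta(x)\epsilon(y)$} and~\hbox{$(\delta(x),\epsilon(y))$}, noting that~$\Delta$ is a homomorphism even for non-abelian~$G$. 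A useful consistency check along the way is compatibility with the rack-level map of Proposition~\ref{prop:crossed_to_Burnside_of_racks}, which sends both descriptions to the same underlying rack structure and pins down the target of~$\Phi_G$ up to the equivalence relations in~$\rmB(\calX)$.

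Joint surjectivity is immediate: every generator~\hbox{$b(X,\delta,G)$} of~$\rmB(\calX)$ is the image under~$\Phi_G$ of the class of the crossed~$G$--set~\hbox{$(X,\delta)\in\calX_G$}, so the direct-sum homomorphism~\hbox{$\bigoplus_G\rmB^\times(G)\to\rmB(\calX)$} already hits every generator and is therefore surjective. The main obstacle is the additivity step for each individual~$\Phi_G$: the disjoint union in~$\calX_G$ keeps the group~$G$ fixed, while the sum relation defining~$\rmB(\calX)$ is phrased over the product group~\hbox{$G\times H$}. Reconciling these two sum conventions through Proposition~\ref{prop:sum_of_crossed} and the supplementary equivalences coming from the fold and diagonal homomorphisms is the heart of the proof.
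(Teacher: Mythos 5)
Your overall strategy coincides with the paper's: the map is the identity on representatives, isomorphism-invariance is immediate, joint surjectivity follows because every finite crossed action is by definition a crossed $G$--set for some finite group $G$, and the crux is reconciling the fixed-group sum $Y\sqcup Z\to\Ad(G)$ (with the diagonal $G$--action) computed in $\rmB^\times(G)$ with the sum relation of $\rmB(\calX)$, which is phrased over the product group. You have isolated that crux correctly, but the device you propose for it does not work. You want to pass from the free-product model $Y\sqcup Z\to\Ad(G*G)$ of the sum (Proposition~\ref{prop:sum_of_crossed}) to the diagonal-action object over $G$ by means of the fold homomorphism $\nabla\colon G*G\to G$ together with ``a matching bijection''. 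No such bijection exists in general: a morphism over $\nabla$ requires $w(uy)=\nabla(u)\,w(y)$ for all $u\in G*G$, and taking $u=h$ in the second free factor, which acts trivially on $Y$ in the free-product sum, forces $w(y)=h\cdot w(y)$ for every $h\in G$, i.e.\ $w(y)$ must be a fixed point of the diagonal $G$--action; this fails as soon as the action on $Y$ is nontrivial. (The embeddings $G\to G*G$ fail symmetrically on $Z$, so no morphism in the opposite direction helps either.) The paper instead uses the diagonal $\Delta\colon G\to G\times G$ with the identity on underlying sets, going from the diagonal-action object over $G$ to the $(G\times G)$--model of the sum; this is the choice for which the identity map is equivariant, since $(g,g)$ acts on $Y$ via the first projection and on $Z$ via the second, in both cases as $g$. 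You should replace the fold by the diagonal and then verify the two conditions of Definition~\ref{def:equivalence} --- equivariance of $w$ and compatibility with the crossings --- explicitly, rather than asserting that they can be ``engineered''; the crossing condition is exactly where these arguments are delicate.

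Two smaller points. Your multiplicativity paragraph addresses something the paper's proof leaves implicit; the diagonal embedding is again the relevant tool for comparing the product of crossed $G$--sets over $\Ad(G)$ with the product \eqref{eq:product_of_crossed} over $G\times G$, but the same warning applies: write the equivalence down and check the crossing identity rather than invoking ``auxiliary equivalences that absorb the discrepancy''. Your surjectivity argument is identical to the paper's and is fine.
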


\begin{proof}
The morphism is given by the identity on representatives, i.e., the class of the crossed~$G$--set~$\delta\colon X\to G$ is sent to the class of the crossed action~\hbox{$\delta\colon X\to G$}. We need to check that this respects the addition. If~\hbox{$Y\to G$} and~\hbox{$Z\to G$} are two crossed~$G$--sets, then their sum, as a crossed~$G$--set is~$Y\sqcup Z\to G$, whereas it is~\hbox{$Y\sqcup Z\to G\times G$} as a crossed action. 

\begin{lemma}
The crossed actions~$Y\sqcup Z\to G$ and~\hbox{$Y\sqcup Z\to G\times G$} are equivalent.
\end{lemma}

\begin{proof}
An equivalence in the sense of Definition~\ref{def:equivalence} is given by the identity on underlying sets, and the diagonal~$G\to G\times G$ morphism of groups.
\end{proof}

Joint surjectivity of the morphisms~\eqref{eq:local_to_global} now follows from the obvious fact that every finite crossed action can be presented as a crossed~$G$--set for some finite group~$G$. 
\end{proof}

The reader will note that the structure we revealed in~$\rmB(\calX)$ by Proposition~\ref{prop:crossed_like_racks} is very much resemblant to the structure of~$\rmB(\calR)$ established earlier in~Propositions~\ref{prop:crossed_to_Burnside_of_racks} and~\ref{prop:jointly_surjective}. The following result explains this observation.

\begin{theorem}\label{thm:iso_BX_BR}
There is an isomorphism
\[
\rmB(\calX)\cong\rmB(\calR)
\]
of rings.
\end{theorem}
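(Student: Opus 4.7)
The strategy is to construct ring maps $\Phi \colon \rmB(\calX) \to \rmB(\calR)$ and $\Psi \colon \rmB(\calR) \to \rmB(\calX)$ that are mutually inverse.

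For $\Phi$, send $b(\delta \colon X \to \Ad(G))$ to the class of the associated rack $(X,\rhd)$ with $a \rhd b = \delta(a) \cdot b$ from Proposition~\ref{prop:crossed_to_Burnside_of_racks}. This respects equivalences: given an equivalence $(f,w)$ from $\delta$ to $\epsilon$, the bijection $w$ is a rack isomorphism, since
\[
w(a \rhd b) = w(\delta(a) \cdot b) = f(\delta(a)) \cdot w(b) = \epsilon(w(a)) \cdot w(b) = w(a) \rhd w(b).
\]
It also respects sums: the crossed action $X \sqcup Y \to \Ad(G \times H)$ of Proposition~\ref{prop:sum_of_crossed} produces exactly the disjoint-union rack of Proposition~\ref{prop:disjoint_union_of_racks}, which admits $X$ and $Y$ as a decomposition, giving $b(X \sqcup Y) = b(X)+b(Y)$ in $\rmB(\calR)$. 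Multiplicativity follows from the componentwise formula~\eqref{eq:product_of_crossed} for products.

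For $\Psi$, send $b(R)$ to $b(\ell_R \colon R \to \Ad(\Inn(R)))$, using that $\Inn(R)$ is finite for finite $R$ and that equivariance of $\ell_R$ is recorded in~\eqref{eq:aut}. The crux is well-definedness on decompositions. Given $R = S \cup T$, since $S,T$ are ideals they are $G$-invariant for $G = \Inn(R)$, and $R$ splits as $S \sqcup T$ as a crossed $G$-set, giving $b(R \to G) = b(S \to G) + b(T \to G)$ in $\rmB^\times(G)$ and, via the ring map of Proposition~\ref{prop:crossed_like_racks}, the same identity in $\rmB(\calX)$. To replace $G$ by $\Inn(S)$ and $\Inn(T)$ on the summands, I observe that the $G$-action on $S$ factors through a subgroup $G_S \leqslant \Aut(S)$ containing $\Inn(S)$, and a zig-zag via the quotient $G \twoheadrightarrow G_S$ and the inclusion $\Inn(S) \hookrightarrow G_S$ yields an identification $b(S \to G) = b(S \to \Inn(S))$ in $\rmB(\calX)$. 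Multiplicativity of $\Psi$ follows similarly from the natural inclusion $\Inn(R \times R') \hookrightarrow \Inn(R) \times \Inn(R')$, which is an equivalence of crossed actions.

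For mutual invertibility, $\Phi \Psi(b(R)) = b(R)$ is immediate from the definitions. For $\Psi \Phi$, given $\delta \colon X \to \Ad(G)$ with associated rack $R_\delta = (X,\rhd)$, both $(X \to G)$ and $(X \to \Inn(R_\delta))$ are equivalent in $\calX$ to $(X \to \Aut(R_\delta))$: the former via the action homomorphism $G \to \Aut(R_\delta)$ (where the crossing compatibility holds because $\delta(a)$ acts on $X$ precisely as $\ell_a$), and the latter via the inclusion $\Inn(R_\delta) \hookrightarrow \Aut(R_\delta)$. Hence they agree in $\rmB(\calX)$, giving $\Psi \Phi = \id$. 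The main obstacle is the decomposition compatibility of $\Psi$: because $\Inn(R)$ does not decompose as $\Inn(S) \times \Inn(T)$ in general, the proof cannot proceed piecewise and must instead work over the common group $G = \Inn(R)$ and shuffle groups via equivalences.
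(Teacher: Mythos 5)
Your proof is correct, and its overall architecture---the assignment $a\rhd b=\delta(a)\cdot b$ in one direction, a canonical crossing into an automorphism group in the other, with the round trips checked via equivalences---matches the paper's. The genuine difference lies in the direction $\rmB(\calR)\to\rmB(\calX)$: the paper crosses a rack $R$ into its full automorphism group $\Aut(R)$ and disposes of additivity in one line, asserting that $R\to\Aut(R)$ is equivalent to the sum $S\sqcup T\to\Aut(S)\times\Aut(T)$ via the inclusion $\Aut(S)\times\Aut(T)\leqslant\Aut(R)$. You instead cross into $\Inn(R)$ and prove additivity in two steps: first the relation $b(R\to G)=b(S\to G)+b(T\to G)$ over the common group $G=\Inn(R)$, pushed forward from $\rmB^\times(G)$ along the ring map of Proposition~\ref{prop:crossed_like_racks}, and then a zig-zag of equivalences through the image group $G_S\leqslant\Aut(S)$ to replace $G$ by $\Inn(S)$. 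This extra care pays off: for a general decomposition the subracks $S$ and $T$ need not act trivially on each other, so the crossing $\ell_s$ of $R\to\Aut(R)$ restricted to $T$ need not be the identity, and $\Aut(S)\times\Aut(T)$ need not even embed in $\Aut(R)$; the paper's one-line equivalence is thus only transparent for honest disjoint unions in the sense of Proposition~\ref{prop:disjoint_union_of_racks}, whereas your factorisation through $\rmB^\times(\Inn(R))$ and the quotient $G\twoheadrightarrow G_S$ handles arbitrary decompositions cleanly. What the paper's choice of $\Aut$ buys is a marginally shorter verification that the composite $\rmB(\calX)\to\rmB(\calR)\to\rmB(\calX)$ is the identity (no intermediate group is needed); you recover this with the extra equivalence $\Inn(R_\delta)\hookrightarrow\Aut(R_\delta)$, which is harmless.
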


\begin{proof}
We start by defining a morphism~$\rmB(\calX)\to\rmB(\calR)$, i.e., an additive invariant of finite crossed actions with values in~$\rmB(\calR)$. We claim that this is given by sending~$\delta\colon X\to G$ to the class~$b(X)$ of the rack~$X$ with~\hbox{$x\rhd y=\delta(x)y$}. To see that this is an additive invariant, note that equivalent crossed actions give rise to isomorphic racks and that the rack corresponding to a sum~\hbox{$(\delta,\epsilon)\colon X\sqcup Y\to G\times H$} is decomposed into~$X$ and~$Y$ so that it maps to the sum~$b(X)+b(Y)$ in~$\rmB(\calR)$. We note that this homomorphism is multiplicative.

We now define a morphism~$\rmB(\calR)\to\rmB(\calX)$, i.e., an additive invariant of finite racks with values in~$\rmB(\calX)$. We claim that this is given by sending a finite rack~$(R,\delta)$ to~$b(\delta\colon R\to\Aut(R))$. To see that this function is an additive invariant, note first that isomorphic racks give isomorphic crossed actions. Then, a decomposition of a rack~$R$ into a disjoint union of subracks~$S$ and~$T$ gives rise to equivalent crossed actions~$R\to\Aut(R)$ and~\hbox{$S\sqcup T\to\Aut(S)\times\Aut(T)$}. This time, the equivalence is given by the identity on underlying sets, and the inclusion
\[
\Aut(S)\times\Aut(T)\leqslant\Aut(S\sqcup T)=\Aut(R)
\]
of groups.

Finally, we need to check that the compositions are the identity. This is trivial for the composition~$\rmB(\calR)\to\rmB(\calX)\to\rmB(\calR)$, as this sends the class~$b(R,\rhd)$ of a rack~$(R,\rhd)$ to itself. The other composition sends a crossed action~\hbox{$\delta\colon X\to G$} to the crossed action~\hbox{$\delta\colon X\to \Aut(X)$}, and the latter is equivalent to the former via the equivalence that is given by the identity on underlying sets, and the morphism~$G\leqslant\Aut(X)$ of groups given by the action.
\end{proof}


\begin{remark}
In the proof, it might be tempting to try and use the canonical embedding~\hbox{$R\to\rmF(R)$} into the free group as a crossing. However, there is no reason for it to be equivariant; it only becomes equivariant when we pass to the quotient by the relations that define the enveloping group~$\Gr(R,\rhd)$, which turns out to be the initial object among the possible crossings. However, it is almost never finite, only when~$R=\emptyset$. On the other hand, the automorphism group~$\Aut(R)$ is always finite when the rack~$R$ is.
\end{remark}



\vfill

School of Mathematical Sciences, Lancaster University, Lancaster
LA1 4YF, UNITED KINGDOM\\
\href{mailto:n.mazza@lancaster.ac.uk}{n.mazza@lancaster.ac.uk}

\vspace{\baselineskip}

School of Mathematical and Physical Sciences, University of Sheffield, Sheffield S3 7RH, UNITED KINGDOM\\
\href{mailto:m.szymik@sheffield.ac.uk}{m.szymik@sheffield.ac.uk}\\
Department of Mathematical Sciences, NTNU Norwegian University of Science and Technology, 7491 Trondheim, NORWAY\\
\href{mailto:markus.szymik@ntnu.no}{markus.szymik@ntnu.no}


\end{document}